\theoremstyle{plain}
\newtheorem{theorem}{Theorem}[section]
\newtheorem{corollary}[theorem]{Corollary}
\newtheorem{lemma}[theorem]{Lemma}
\newtheorem{proposition}[theorem]{Proposition}
\theoremstyle{definition}
\newtheorem{definition}[theorem]{Definition}
\theoremstyle{remark}
\newtheorem{remark}[theorem]{Remark}
\newcommand{\A}{\mathcal{A}}
\newcommand{\D}{\mathcal{D}}
\newcommand{\Oh}{\mathcal{O}}
\newcommand{\C}{\mathbb{C}}
\newcommand{\PG}{\mathbb{P}^1}
\newcommand{\PR}{\mathbb{P}}
\newcommand{\U}{\mathcal{U}}
\newcommand{\K}{\mathcal{K}}
\newcommand{\la}{\alpha}
\newcommand{\lb}{\beta}
\newcommand{\lc}{\gamma}
\newcommand{\Harm}{\mathcal{H}}
\newcommand{\ovl}{\overline}
\newcommand{\dbar}{\bar \partial}
\newcommand{\dl}{ \partial}
\newcommand{\dbarstar}{\bar \partial^*}
\renewcommand\>{\rangle}
\newcommand{\X}{\mathcal{X}}
\newcommand{\M}{\mathcal{M}_g}
\newcommand{\Hbn}{\mathcal{H}^{n,b}}
\newcommand{\Hx}{\mathcal{H}_X}
\newcommand{\G}{G^{WP}}
\newcommand{\jbar}{\ovl{\jmath}}
\newcommand{\lbar}{\ovl{l}}
\newcommand{\wbar}{\ovl{w}}
\newcommand{\zbar}{\ovl{z}}
\newcommand{\kbar}{\ovl{k}}
\newcommand{\wY}{\omega_Y}
\newcommand{\z}{\zeta_z^w}
\newcommand{\zb}{\zeta_{\ovl{z}}^{\ovl{w}}} 
\newcommand{\hz}{h_{z\ovl{z}}}
\newcommand{\gz}{g_{z\ovl{z}}}
\newcommand{\hw}{h_{w\ovl{w}}}
\newcommand{\ginv}{g^{\zbar z}}
\newcommand{\Kw}{K_{w\wbar}}
\newcommand{\vbar}{\ovl{v}}
\newcommand{\sbar}{\ovl{s}}
\newcommand{\tz}{\tilde{z}}
\newcommand{\tg}{\tilde{g}}
\newcommand{\tbeta}{\tilde{\beta}}
\newcommand{\tw}{\tilde{w}}
\newcommand{\id}{\operatorname{id}}
\newcommand{\laplacedbar}{\Box_{\dbar}} 
\newcommand{\Hbar}{H_{\dbar}}
\newcommand{\Gbar}{G_{\dbar}}
\numberwithin{equation}{section}
\begin{document}

\title{K\"ahler geometry on Hurwitz spaces}

\author{Philipp Naumann}
\address{Fachbereich Mathematik und Informatik,
Philipps-Universit\"at Marburg, Lahnberge, Hans-Meerwein-Straße, D-35032
Marburg, Germany}
\email{naumann@mathematik.uni-marburg.de}

\subjclass[2000]{32G05, 32G15, 14H15, 53C55}

\keywords{Deformations of holomorphic maps, Hurwitz spaces, Weil-Petersson metric}

\date{}

\begin{abstract}
We study the K\"ahler geometry of the classical Hurwitz space $\Hbn$ of simple branched coverings of the Riemann sphere $\mathbb{P}^1$ by compact hyperbolic Riemann surfaces. A generalized Weil-Petersson metric on the Hurwitz space was recently introduced in \cite{ABS15}. Deformations of simple branched coverings fit into the more general framework of Horikawa's deformation theory of holomorphic maps, which we equip with distinguished representatives in the presence of hermitian metrics. In the article we will investigate the curvature of the generalized Weil-Petersson K\"ahler metric on the Hurwitz space. 
\end{abstract}

\maketitle


\section{Introduction}
\subsection{Hurwitz spaces}
The classical Hurwitz space $\Hbn$ parametrizes isomorphism classes of simple branched coverings
$$
f: X \to \PG
$$
of degree $n$ with $b$ branching points, where $X$ is a compact hyperbolic Riemann surface. These spaces first appeared in the works of Clebsch \cite{Cl72} and Hurwitz \cite{Hu91}, where they showed that $\Hbn$ is connected. Further investigations using the language of modern algebraic geometry were made by Fulton \cite{Fu69} as well as Harris and Mumford \cite{HM82}. Originally, Hurwitz spaces were used as auxiliary objects to study the moduli space $\M$ of compact Riemann surfaces of genus $g>1$. For example, the existence of a natural holomorphic map
$$
\Hbn \to \M,
$$
which is surjective for $n>g$ (\cite{Sev21}),  gives a more elementary proof of the fact that $\M$ is irreducible.
The algebraic geometers studied the geometry of the Hurwitz space by means of the finite topological covering 
$$
\Hbn \to \operatorname{Sym}^b(\PG) \setminus \Delta.
$$
In this article, we take a deformation theoretic point of view and study the Hurwitz space by means of its universal family
$$
\X \to \PG \times \Hbn.
$$
Inspired by the theory of families of K\"ahler-Einstein manifolds in higher dimensions, we equip the fibers $X_s$ with hyperbolic metrics and $\PG$ with the Fubini-Study metric. This allows us to introduce a generalized Weil-Petersson metric on $\Hbn$, which turned out to be K\"ahler (\cite{ABS15}). The metric reflects the variation of the meromorphic maps as well as the variation of the underlying complex structures. Since the methods of K\"ahler geometry are now available, we are able to study the differential geometric properties of the Hurwitz space. The main focus lies on the curvature of the Weil-Petersson metric.

\subsection{Differential geometric setup and statement of results}
The geometric study of the Hurwitz space using methods from differential geometry starts with the work of Axelsson, Biswas and Schumacher in \cite{ABS15}, where they had a deformation theoretic point of view. We first recall their setup and definitions.

We consider a holomorphic family of coverings
$$
(\beta,f): \X \to Y \times S
$$ 
of compact hyperbolic Riemann surfaces $X_s$ and a fixed compact Riemann surface $Y$. (Note that this leads to generalized Hurwitz stacks $\Hbn(Y)$, see \cite{HGS02}).
 Choose local coordinates $(z,s=s^1,\ldots,s^r)$ on $\X$ and $w$ on 
$Y$ such that
$$
\beta(z,s)=w \mbox{ and } f(z,s)=s. 
$$
The fibers $X_s$ carry unique hyperbolic metrics 
$$
\omega_{X_s}=\sqrt{-1}\, g(z,s)\, dz \wedge d\zbar 
$$
of constant Ricci curvature $-1$. 
Let $v_s$ be the \emph{horizontal lift} of a tangent vector $\dl_s$ on $S$ at $s$ introduced by Schumacher in \cite{Sch93}, which are special \emph{canonical lifts} in the sense of \cite{Siu86}. The harmonic representative of the Kodaira-Spencer map $\rho_s: T_{S,s} \to H^1(X_s,T_{X_s})$ is given by
$$
\mu_s = (\dbar v_s)\vert_{X_s}.
$$
Set
$$
\varphi := \<v_s,v_s\>_{\omega_{\X}}\quad \mbox{ where } \quad \omega_{\X} := \sqrt{-1} \dl \dbar \log g(z,s).
$$
According to \cite{Sch93} we have
$$
(\Box_{\omega_s}+1)\varphi= \| \mu_s\|^2.
$$
Define
$$
u_s := \beta_*(v_s)=\beta_*(\dl_s + a^z_s \dl_z )=(\xi^w_s + a^z_s \z)\dl_w,
$$
where
$$
\xi^w_s = \frac{\dl \beta(z,s)}{\dl s} \quad
\mbox{ and } \quad
\z =\frac{\dl \beta(z,s)}{\dl z}.
$$
In this article, we give a more conceptual definition of the vector fields $u_s$. Refining Horikawa's theory of deformations of holomorphic maps in the presence of hermitian metrics, the vector fields $u_s\vert_{X_s} \in A^{0,0}(X_s,\beta_s^*T_Y)$ turn out to be \emph{generalized harmonic representatives} of the characteristic map (Kodaira-Spencer map for deformations of maps) $\tau_s: T_{S,s} \to H^0(X_s,N_{\beta_s})$. Here $N_{\beta_s}$ is the normal sheaf of the map $\beta_s: X_s \to Y$. Moreover, we will apply this general results to the case of coverings of Riemann surfaces. Remembering that the branching points give local coordinates on the Hurwitz space, we can give an answer to the question which infinitesimal movements of branching points actually change the complex structure of the overlying hyperbolic Riemann surface. After providing the reader with the basic properties of Hurwitz spaces in the next section, these deformation theoretic aspects are the content of section 3. 

After introducing the necessary objects, the Weil-Petersson metric can be defined as follows:
\begin{definition}\cite{ABS15}
Let $\omega_Y$ be a metric on $Y$ of constant Ricci curvature equal to $\epsilon =0$ or $\pm1$ depending on its genus.
The Weil-Petersson inner product on the tangent space $T_S$ of the base $S$ is defined by its norm
\begin{eqnarray*}
\|\dl_s\|^2_{WP} &:=& \G_{s\sbar}(s) \\ 
&:=& \int_{X_s}{(\Box_{\omega_s}+1)^{-1}(\|\mu_s\|^2(z,s)) \,\beta^*_s\omega_Y}\\
&+& \int_{X_s}{\|u_s\|^2(z,s)\; \omega_{X_s}}\\
\end{eqnarray*}
\end{definition}

\begin{proposition}[\cite{ABS15}]
The product is positive definite if the family $(\beta,f): \X \to Y \times S$ is effectively parametrized. Furthermore, 
the Weil-Petersson form satisfies the fiber integral formula
$$
\omega^{WP} = \int_{\X/S}{\omega_{\X}\wedge \beta^*\omega_Y}
$$
In particular, the Weil-Petersson form is K\"ahler.
\end{proposition}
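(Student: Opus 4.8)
The plan is to prove the fiber integral formula by an explicit contraction of $\omega_{\X}\wedge\beta^*\wY$ against horizontal lifts, and then to read off both positivity and the K\"ahler property as consequences. Writing $\pi\colon\X\to S$ for the projection, the coefficient $\bigl(\int_{\X/S}\omega_{\X}\wedge\beta^*\wY\bigr)_{s\sbar}$ of the fiber-integrated $(1,1)$-form is obtained by contracting the $(2,2)$-form $\Theta:=\omega_{\X}\wedge\beta^*\wY$ with the horizontal lift $v_s$ and its conjugate $\ovl{v_s}$, and integrating the resulting fiber $(1,1)$-form over $X_s$. In the coordinates $(z,s)$ the decisive property of Schumacher's lift is $\omega_{\X}(v_s,\dl_{\zbar})=0$, so that $v_s\ctr\omega_{\X}$ carries no $d\zbar$; likewise $v_s\ctr dw=u_s^w$, the single component of $u_s=u_s^w\,\dl_w$. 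Expanding $\ovl{v_s}\ctr v_s\ctr\Theta$ with the Leibniz rule for interior products and retaining only the $dz\wedge d\zbar$ part (the only one surviving fiber integration), I expect exactly two surviving contributions, a term proportional to $(\ovl{v_s}\ctr v_s\ctr\omega_{\X})\,\beta^*_s\wY$ and a term proportional to $|u_s^w|^2\,\hw\,\omega_{X_s}$, the mixed cross terms dropping out since they contain no $dz\wedge d\zbar$.

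The heart of the identification is the K\"ahler-geometric identity $\ovl{v_s}\ctr v_s\ctr\omega_{\X}=\<v_s,v_s\>_{\omega_{\X}}=\varphi$, which one verifies directly from the formula $a^z_s=-\ginv\,\dl_s\dl_{\zbar}\log g$ for the lift. Substituting the relation $(\Box_{\omega_s}+1)\varphi=\|\mu_s\|^2$ of \cite{Sch93} turns the first integrand into $(\Box_{\omega_s}+1)^{-1}(\|\mu_s\|^2)\,\beta^*_s\wY$, while the second integrand is precisely $\|u_s\|^2\,\omega_{X_s}$. These are exactly the two summands of $\G_{s\sbar}$ in the Definition, which establishes the fiber integral formula.

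For the K\"ahler property I would argue globally rather than in coordinates. The form $\omega_{\X}=\sqrt{-1}\,\dl\dbar\log g$ is a closed real $(1,1)$-form, and $\beta^*\wY$ is closed as the pullback of the closed form $\wY$, so $\Theta$ is $d$-closed. Since the fibers $X_s$ are compact without boundary, fiber integration commutes with exterior differentiation, giving $d\,\omega^{WP}=\int_{\X/S}d\Theta=0$; thus $\omega^{WP}$ is a closed real $(1,1)$-form, and once positivity is in hand it is a K\"ahler form.

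Positive definiteness then comes from the two integrands. One has $\beta^*_s\wY\ge 0$ because $\beta_s$ is holomorphic and nonconstant, and the Green operator $(\Box_{\omega_s}+1)^{-1}$ preserves nonnegativity by the maximum principle, so both summands of $\G_{s\sbar}$ are $\ge 0$. If $\G_{s\sbar}=0$ then the vanishing of the first integral forces $\varphi\equiv 0$, whence $\|\mu_s\|^2=(\Box_{\omega_s}+1)\varphi\equiv 0$ and $\mu_s\equiv 0$, while the vanishing of the second integral forces $u_s\equiv 0$. The main obstacle lies precisely here: one must invoke the harmonic theory of Section 3 to know that the pair $(\mu_s,u_s)$ faithfully represents the characteristic map $\tau_s(\dl_s)$ together with the Kodaira-Spencer class, so that their simultaneous vanishing means $\dl_s$ lies in the kernel of the infinitesimal deformation map and hence $\dl_s=0$ by effective parametrization, a contradiction. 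Besides this conceptual point, the $\sqrt{-1}$-factors and orientation conventions entering the contraction step, though routine, must be tracked carefully to match the normalization of the fiber integral.
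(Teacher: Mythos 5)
Your proposal is correct, but it takes a genuinely different route from the paper on two of the three assertions. For positive definiteness you argue essentially as in Section 4.3 of the paper: non-negativity of $\varphi$ from the positivity of $\Box_{\omega_s}+1$, vanishing of both summands, then effective parametrization. The only difference is that you treat $(\mu_s,u_s)$ as a pair whose simultaneous vanishing must be invoked, whereas the paper notes that $u_s=0$ alone suffices: $u_s$ is precisely the generalized harmonic representative of $\tau_s(\dl_s)$ in $H_{X_s/Y}\cong H^0(X_s,N_{\beta_s})$, so injectivity of $\tau_s$ finishes the argument (and $\mu_s=0$ follows anyway from $\dbar u_s=\beta_*\mu_s$ with $\beta_*$ injective). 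The genuine divergence is in the other two assertions. You prove the fiber integral formula by contracting $\omega_{\X}\wedge\beta^*\wY$ against $v_s$ and $\ovl{v_s}$, using $\omega_{\X}(v_s,\dl_{\zbar})=0$, $\beta_*v_s=u_s$ and $\iota_{\ovl{v_s}}\iota_{v_s}\omega_{\X}=\varphi$, and then obtain $d\omega^{WP}=0$ for free because $\omega_{\X}\wedge\beta^*\wY$ is closed and $d$ commutes with fiber integration over the compact fibers; this is in substance the argument of \cite{ABS15}, which the paper cites but does not reproduce. The paper instead proves the K\"ahler property by a direct computation of the symmetry $\dl_kG^{WP}_{i\jbar}=\dl_iG^{WP}_{k\jbar}$ (Section 4.5), using Lie derivatives, covariant derivatives and Stokes' theorem, and never uses the fiber integral formula. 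Your route is shorter and more conceptual, and it actually supplies a proof of the middle assertion, which the paper leaves to the reference; the paper's computation buys an insight your argument cannot give, namely that $G_0^{WP}$ and $G_1^{WP}$ are not separately K\"ahler: the symmetry holds only by virtue of the exchange identity $\int_{X_s}\varphi_{i\jbar}L_k(\beta_s^*\wY)=\int_{X_s}u_k\cdot D_i(u_{\jbar})\,g\,dA$ coupling the two summands.
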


In section 4, we will study the Weil-Petersson metric on the base of a family of coverings. We provide a list of useful identities for later computations. Moreover, we will reprove the K\"ahler identity by a direct computation.
 
In section 5, we consider the universal family $(\beta,f): \X \to \PG \times \Hbn$.
Under the assumption $b>4g-4$, the coherent sheaf $f_*\beta^*T_{\PG}$ is locally free and a holomorphic subbundle of the tangent bundle $T_{\Hbn}$ on the Hurwitz space. As our main result, we obtain
\begin{theorem}
The induced Weil-Petersson metric on the subbundle $f_*\beta^*T_{\PG}$ (which is fiberwise the natural $L^2$-metric on $H^0(X_s,\beta_s^*T_{\PG})$) has curvature
\begin{eqnarray*}
R^{WP}_{i\jbar k\lbar}(s)=&-& \int_{X_s}{D_k(u_i^w)D_{\lbar}(u^{\wbar}_{\jbar})\hw \; g\,dA} \\
&+&  \int_{X_s}{u_i^w u_{\jbar}^{\wbar}(\xi^w_k + a_k^z \z)(\xi^{\wbar}_{\lbar} + a_{\lbar}^{\zbar}\zb)\hw^2 \; g\,dA} \\
&+&  \int_{X_{s}}{\Box\varphi_{k\lbar} u_{i}^w u_{\jbar}^{\wbar}\hw \; g\,dA},
\end{eqnarray*}
where $u_1,\ldots,u_r \in \Gamma(U,f_*\beta^*T_{\PG})$ are local holomorphic sections,
$D_k$ is the covariant derivative in the direction of the horizontal lift $v_k$,
$\omega_Y = \sqrt{-1}\, h_{w\wbar}\, dw \wedge d\wbar$,
$dA = \sqrt{-1} \, dz\wedge d\zbar$ and $\omega_{X_s}=g\,dA$.
\end{theorem}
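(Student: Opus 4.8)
The plan is to realize the induced metric as a fiber integral, $h_{i\jbar}(s) = \int_{X_s} u_i^w\,u_{\jbar}^{\wbar}\,\hw\,g\,dA$ in the holomorphic frame $u_1,\dots,u_r$, and to compute its Chern curvature through the standard formula $R_{i\jbar k\lbar} = -\partial_k\partial_{\lbar}h_{i\jbar} + h^{p\qbar}\,\partial_k h_{i\qbar}\,\partial_{\lbar}h_{p\jbar}$. Both derivatives of the fiber integral are taken via Lie derivatives along the horizontal lifts $v_k, v_{\lbar}$, which lift $\partial_k, \partial_{\lbar}$ and are $\omega_{\X}$-orthogonal to the fibers. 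I will use throughout that each $u_j^w$ is holomorphic in $z$ and in $s$, so that $v_k(u_{\jbar}^{\wbar}) = 0$ and the antiholomorphic covariant derivative $D_{v_{\lbar}}u_j = 0$ with respect to the Chern connection of the pullback bundle $\beta^*T_{\PG}$ induced by $\omega_Y$.

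First I would record that, because the horizontal lifts are $\omega_{\X}$-orthogonal to the fibers, the contractions $\iota_{v_k}\omega_{\X}$ and $\iota_{v_{\lbar}}\omega_{\X}$ have no relative component along $X_s$; hence $L_{v_k}\omega_{\X}$ and $L_{v_{\lbar}}\omega_{\X}$ carry no relative $(1,1)$-part, and the differentiation of $\int_{X_s}$ reduces to differentiating the integrand. Using metric compatibility of the Chern connection together with $v_k(\hw) = (\partial_w\hw)\,u_k^w$, the two naive contributions $v_k(u_i^w)\,\hw$ and $u_i^w\,v_k(\hw)$ combine into $D_k(u_i^w)\,\hw$, so that $\partial_k h_{i\jbar} = \int_{X_s} D_k(u_i^w)\,u_{\jbar}^{\wbar}\,\hw\,g\,dA$. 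This is the step that explains the presence of the covariant derivative $D_k$.

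The heart of the proof is the second derivative. Differentiating once more and using metric compatibility, the integrand splits as $\langle D_{v_{\lbar}}D_k u_i, u_j\rangle + \langle D_k u_i, D_l u_j\rangle$, where $\langle\cdot,\cdot\rangle$ is the Fubini--Study inner product on $\beta^*T_{\PG}$. The second summand, carried by the overall minus sign of the Chern formula, gives the first term $-\int_{X_s} D_k(u_i^w)\,D_{\lbar}(u_{\jbar}^{\wbar})\,\hw\,g\,dA$; note that it appears with the full covariant derivatives and no harmonic projection, precisely because $u_j$ is already holomorphic. For the first summand I would use $D_{v_{\lbar}}u_i = 0$ to write $D_{v_{\lbar}}D_k u_i = \Theta(v_{\lbar},v_k)\,u_i + D_{[v_{\lbar},v_k]}u_i$. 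The curvature $\Theta(v_{\lbar},v_k)$ pulls back the Fubini--Study curvature of $(\PG,\omega_Y)$; since this metric has constant curvature its component is $\hw\,u_k^w\,u_{\lbar}^{\wbar}$, and, accounting for the overall sign, this yields the $\hw^2$-weighted second term $\int_{X_s} u_i^w u_{\jbar}^{\wbar}(\xi^w_k + a_k^z \z)(\xi^{\wbar}_{\lbar} + a_{\lbar}^{\zbar}\zb)\,\hw^2\,g\,dA$. The remaining bracket contribution $D_{[v_{\lbar},v_k]}u_i$, whose bracket is vertical and measures the geodesic curvature of the family, must then be combined with the Chern-correction term $h^{p\qbar}\,\partial_k h_{i\qbar}\,\partial_{\lbar}h_{p\jbar}$ and shown to assemble into the third term $\int_{X_s}\Box\varphi_{k\lbar}\,u_i^w u_{\jbar}^{\wbar}\,\hw\,g\,dA$.

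The main obstacle is precisely this last assembly. One must identify the vertical field $[v_k,v_{\lbar}]$ with the geodesic-curvature data of the family, evaluate $\langle D_{[v_{\lbar},v_k]}u_i, u_j\rangle$, and verify that together with the Chern-correction term it collapses to the density $\Box\varphi_{k\lbar}$. This requires the resolvent identity $(\Box_{\omega_s}+1)\varphi = \|\mu_s\|^2$ to express the geodesic curvature through the Kodaira--Spencer class, commuting the fiber Laplacian $\Box$ past the covariant derivatives by means of the Bochner--Kodaira identities on the fibers (whose fibers have constant curvature $-1$), and repeated integration by parts exploiting the holomorphicity of the $u_i$. Getting this geodesic-curvature bookkeeping and the attendant commutators right is the delicate computational core; the remaining algebra is routine.
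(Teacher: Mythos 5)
Your skeleton --- differentiating the fiber integral $G_{i\jbar}=\int_{X_s}u_i^w u_{\jbar}^{\wbar}\hw\,g\,dA$ via Lie derivatives along horizontal lifts, obtaining $\dl_k G_{i\jbar}=\int_{X_s}D_k(u_i^w)u_{\jbar}^{\wbar}\hw\,g\,dA$, splitting the second derivative into $\langle D_{\lbar}D_k u_i,u_j\rangle+\langle D_k u_i,D_l u_j\rangle$, and extracting the Fubini--Study curvature contribution from $D_{\lbar}D_k u_i$ --- is exactly the paper's, and your packaging $D_{\lbar}D_k u_i=\Theta(v_{\lbar},v_k)u_i+D_{[v_{\lbar},v_k]}u_i$ (legitimate, since $u_i$ is holomorphic on the total space, so $D_{v_{\lbar}}u_i=0$) is a clean conceptual version of the paper's direct coordinate computation; the first two terms of the theorem do come out this way. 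The genuine gap is the third term, and your proposed route to it would not work as described. The Chern correction term $h^{p\qbar}\dl_k h_{i\qbar}\dl_{\lbar}h_{p\jbar}$ does \emph{not} combine with the bracket term: the paper works in a normal frame at $s$ (chosen so that $G_{i\jbar}(s)=\delta_{ij}$ and $\dl_k G_{i\jbar}(s)=0$), where the correction vanishes identically --- and the stated formula is genuinely a normal-frame formula, since its first term is not invariant under holomorphic changes of frame. In a general frame the correction pairs with $-\langle D_k u_i,D_l u_j\rangle$, replacing $D_k u_i$ by its projection orthogonal to $H^0(X_s,\beta_s^*T_{\PG})$; it never interacts with the bracket term.

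Moreover, the tools you invoke for the "delicate core" --- the resolvent identity $(\Box+1)\varphi=\|\mu\|^2$, Bochner--Kodaira commutation, repeated integration by parts --- are not what produces $\Box\varphi_{k\lbar}$, and nothing in your sketch actually identifies the bracket. What is needed is the explicit computation of the vertical field $[v_{\lbar},v_k]=v_{\lbar}(a_k^z)\dl_z-v_k(a_{\lbar}^{\zbar})\dl_{\zbar}$ via equation $(4.5)$ of Lemma \ref{Liu} (the Liu--Sun--Yau identity $\dl_{\lbar}a_k^z=-A^z_{k\zbar}a_{\lbar}^{\zbar}-g^{-1}\dl_{\zbar}\varphi_{k\lbar}$), which gives
$$
v_{\lbar}(a_k^z)=\dl_{\lbar}a_k^z+a_{\lbar}^{\zbar}A^z_{k\zbar}=-g^{-1}\dl_{\zbar}\varphi_{k\lbar}.
$$
Since $u_i$ is holomorphic, only the $\dl_z$-component of the bracket acts, and a single application of Stokes' theorem closes the argument:
\begin{align*}
-\int_{X_s} D_{[v_{\lbar},v_k]}(u_i^w)\,u_{\jbar}^{\wbar}\hw\,g\,dA
&=\int_{X_s} g^{-1}\dl_{\zbar}\varphi_{k\lbar}\,u_{i;z}^w\,u_{\jbar}^{\wbar}\hw\,g\,dA\\
&=-\int_{X_s} g^{-1}\dl_z\dl_{\zbar}\varphi_{k\lbar}\,u_i^w u_{\jbar}^{\wbar}\hw\,g\,dA
=\int_{X_s}\Box\varphi_{k\lbar}\,u_i^w u_{\jbar}^{\wbar}\hw\,g\,dA,
\end{align*}
where one uses $u_{i;z}^w u_{\jbar}^{\wbar}\hw=\dl_z\bigl(u_i^w u_{\jbar}^{\wbar}\hw\bigr)$ (antiholomorphicity of $u_{\jbar}$ and $\nabla_z\hw=0$). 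Without identifying $v_{\lbar}(a_k^z)$ through Lemma \ref{Liu} the third term cannot be reached; with it, no spectral or Bochner--Kodaira input is required at all.
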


The first summand can be split up into four terms such that the explicit derivatives in base direction completely disappear. 
Note that we are dealing with a \emph{coupled situation} where the complex structure on $X_s$ as well as the hermitian bundle $(\beta_s^*T_{\PG},\beta_s^*h)$ varies. The difficulty in the computation of the curvature arises from the fact that we have two competing hermitian metrics $\omega_{X_s}$ and $\omega_{\PG}$ both contributing to the Weil-Petersson metric. But there is no intimate relation between those two metrics. 

In the situation $b>4g-4,$ the canonical map $\Hbn \to \M$ is a submersion onto the smooth open subset of the moduli space belonging to curves having trivial automorphism group. Writing $\M^0$ for this part and replacing $\Hbn$ by the open subset $\Hbn_0$ lying over $\M^0$, we can write
$$      
\K^{-1}_{\Hbn_0/\M^0} \cong \det(f_*\beta^*T_{\PG}),
$$
where we have restricted the family to $\Hbn_0$.
We denote the fiber of a class $[X]$ under $\Hbn_0 \to \M^0$ by $\Hx$.  
Thus, $\Hx$ is a complex submanifold of $\Hbn$ of dimension $(2n-g+1)$.
We have the relation
$$
f_*\beta^*T_{\PG}\vert_{\Hx} = T_{\Hx}.
$$
\begin{corollary}
The curvature of the restricted Weil-Petersson metric is given by
\begin{eqnarray*}
R_{i\jbar k \lbar}(s) = &-& \int{\Gbar \left( \psi_{ik} \right) \cdot \psi_{\jbar\lbar}\; g\, dA} \\
                          &+&  \int{(\xi_i \cdot \xi_{\jbar}) \, (\xi_k \cdot \xi_{\lbar}) \; g\, dA}  
\end{eqnarray*}
where
$$
\psi_{ik}\, d\zbar \otimes \dl_w := \left( \xi^w_i \xi^w_k \zb \hw \right) d\zbar \otimes \dl_w. 
$$
\end{corollary}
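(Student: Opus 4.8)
The plan is to specialize the curvature formula of the preceding theorem to the fibre $\Hx$, exploiting that along $\Hx$ the complex structure of $X_s$ is held fixed. The tangent directions to $\Hx$ are precisely those $\dl_s$ annihilated by the Kodaira-Spencer map $\rho_s\colon T_{S,s}\to H^1(X_s,T_{X_s})$, and since $\mu_s=(\dbar v_s)\vert_{X_s}$ is the \emph{harmonic} representative of $\rho_s(\dl_s)$, vanishing of the class forces $\mu_s=0$ pointwise. First I would record the two consequences of $\mu_s=0$. The relation $(\Box_{\omega_s}+1)\varphi=\|\mu_s\|^2=0$ together with invertibility of $\Box_{\omega_s}+1$ gives $\varphi=0$, and likewise its polarization $\varphi_{k\lbar}=0$. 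Moreover $\dbar(a^z_s\dl_z)\vert_{X_s}=\mu_s=0$ exhibits $a^z_s\dl_z$ as a global holomorphic vector field on $X_s$, which vanishes because $g>1$ forces $H^0(X_s,T_{X_s})=0$; hence $a^z_s=0$ along the fibre and $u^w_s=\xi^w_s$.

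With these reductions the third summand of the theorem drops out entirely, since it carries the factor $\Box\varphi_{k\lbar}=0$, and in the second summand every $(\xi^w_k+a^z_k\z)$ collapses to $\xi^w_k$. Writing $\xi_i\cdot\xi_{\jbar}:=\xi^w_i\xi^{\wbar}_{\jbar}\hw$ for the pointwise Hermitian pairing, the second summand becomes $\int(\xi_i\cdot\xi_{\jbar})(\xi_k\cdot\xi_{\lbar})\,g\,dA$, which is exactly the positive term of the corollary. It remains to rewrite the first summand.

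For the first summand I would compute the fibrewise $\dbar$ of the covariant derivative $D_k u_i$. As $\beta$ is holomorphic in $(z,s)$ we have $\dl_{\zbar}\xi^w_i=0$, so $u_i=\xi^w_i\,\dl_w$ is holomorphic and $\dbar u_i=0$; thus $\dbar D_k u_i=[\dbar,D_k]u_i$ equals the mixed $(s^k,\zbar)$-component of the curvature of $\beta^*T_{\PG}$ applied to $u_i$. Since $\PG$ has genus $0$, the genus-$0$ normalization $-\dl_w\dl_{\wbar}\log\hw=\hw$ (Gauss curvature $+1$) makes the curvature of the line bundle $(T_{\PG},\omega_Y)$ positive; pulling back along $\beta$ and contracting with $\xi^w_k$ and $\zb=\overline{\dl_z\beta}$ yields precisely $\dbar D_k u_i=\psi_{ik}$ with $\psi_{ik}=\xi^w_i\xi^w_k\zb\hw$. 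Evaluating the curvature tensor at the given point $s$ in a holomorphic frame normal for the Chern connection of $f_*\beta^*T_{\PG}$, the harmonic part of $D_k u_i$ vanishes, so Hodge theory gives $D_k u_i=\dbarstar\Gbar(\psi_{ik})$. Self-adjointness of $\Gbar$, the identity $\dbar\dbarstar\Gbar=\id-\Hbar$ on $(0,1)$-forms, and $\Hbar\Gbar=0$ then convert $-\int D_k(u^w_i)\,D_{\lbar}(u^{\wbar}_{\jbar})\,\hw\,g\,dA=-\langle D_k u_i,D_\lbar u_j\rangle$ into $-\int\Gbar(\psi_{ik})\cdot\psi_{\jbar\lbar}\,g\,dA$, completing the corollary.

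The main obstacle I anticipate is the curvature computation $\dbar D_k u_i=\psi_{ik}$: one must keep careful track of the pullback connection on $\beta^*T_{\PG}$ against the fibre metric $g$, verify that no term involving the now-vanishing variation of complex structure survives, and confirm that the normalization of $\omega_Y$ produces exactly the coefficient $\zb\hw$ rather than a curvature constant times it. The bookkeeping between the two competing metrics $\omega_{X_s}$ and $\omega_{\PG}$ emphasized after the theorem is exactly what makes this step delicate, even though the conceptual content—curvature of the pulled-back tangent bundle cupped with the section, inverted by the Green operator—is transparent.
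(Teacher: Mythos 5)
Your route is genuinely different from the paper's: you specialize the curvature formula of Theorem~\ref{KrBdl} to the fibre $\Hx$, whereas the paper (which explicitly announces ``a separate proof'') establishes the statement from scratch as Theorem~\ref{kruemmungUnterraum}. It first invokes Fischer--Grauert to write the restricted family locally as a product $X\times S$, so that $g(z,s)=g(z)$, $a^z_i\equiv 0$ and $v_i=\dl_i$, and only then differentiates $G_{i\jbar}=\int\xi^w_i\xi^{\wbar}_{\jbar}\hw\,g\,dA$ twice, with the same Hodge-theoretic endgame you use (normal coordinates kill the harmonic projection, $\dbar(\nabla_k\xi^w_i)=-\Kw\xi^w_k\xi^w_i\zb\,d\zbar$, and $\id=\Hbar+\Gbar\laplacedbar$ converts the term quadratic in $\nabla_k\xi_i$ into the Green-operator term). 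Your reductions $\mu_k=0$, then $\varphi_{k\lbar}=0$ via $(\Box+1)\varphi_{k\lbar}=A_k\cdot A_{\lbar}$ and invertibility, the vanishing of the third summand, and the Hodge-theoretic conversion of the first summand are all sound; the only slip there is a sign, $\dbar D_ku_i=-\Kw u^w_ku^w_i\zb\,d\zbar\otimes\dl_w$ rather than $+\psi_{ik}$, which is immaterial because the final expression is quadratic in this quantity.

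The genuine gap is the step asserting that $\mu_s=0$ exhibits $a^z_s\dl_z$ as a \emph{global} holomorphic vector field on $X_s$, whence $a^z_s=0$ and $u^w_s=\xi^w_s$. The coefficient $a^z_s$ is attached to a choice of fibered coordinates $(z,s)$: under a fibre-coordinate change $z'=h(z,s)$ one has $a^{z'}_s\dl_{z'}=a^z_s\dl_z+(\dl h/\dl s)\,\dl_{z'}$, so the local fields $a^z_s\dl_z$ glue only up to the \v{C}ech cocycle $(\dl h/\dl s)\,\dl_{z'}$ representing the Kodaira--Spencer class. Vanishing of $\mu_s$ therefore makes each local piece holomorphic but does not produce a global section of $T_{X_s}$, and indeed ``$a^z_s=0$'' is not a coordinate-invariant statement (neither is $\xi^w_i$, which is why the corollary's formula presupposes a preferred coordinate choice at all), so it cannot follow from the coordinate-invariant hypothesis $\rho_s(\dl_s)=0$. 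The missing ingredient is exactly what the paper supplies: local triviality of $f^{-1}(\Hx)\to\Hx$ (Fischer--Grauert, or equivalently integration of the holomorphic horizontal lifts as in Remark~\ref{BemHolLifts} and Proposition~\ref{LokTriv}) and the choice of product coordinates, in which $a^z_i\equiv 0$ and $\xi_i=u_i$. Alternatively you can bypass the reduction entirely: the coordinate-free object is $u_k=(\xi^w_k+a^z_k\z)\dl_w$, the second summand of Theorem~\ref{KrBdl} is verbatim $\int(u_i\cdot u_{\jbar})(u_k\cdot u_{\lbar})\,g\,dA$, and your commutator argument for the first summand still goes through with $D_k=\nabla_k+a^z_k\nabla_z$ because $\dl_{\zbar}a^z_k=\mu_k=0$ along $\Hx$; the $\xi$'s in the corollary are then to be read as these $u$'s.
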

Although stated as a corollary, we will provide a separate proof of this statement.

The results in this article are from the authors dissertation \cite{Na16}.

\section{Construction of Hurwitz spaces and first properties}
It is very classical to study compact Riemann surfaces by branched coverings of the Riemann sphere. The generic case is one of a \emph{simple} branched covering with only two sheets meeting over each branch point. The number $b$ of branch points for a simple covering of degree $n$ with total space of genus $g$ is given by the Riemann-Hurwitz formula
$$
b=2n+2g-2
$$ 
The classical \emph{Hurwitz space} $\Hbn$ is the set of equivalence classes of simple branched coverings $f: X \to \PG$ of compact Riemann surfaces $X$ of degree $n$ with $b$ branch points, $(n,b)$-covering for short, where $X$ is considered to be hyperbolic. Here we say that two coverings $f: X \to \PG$ and $g: X' \to \PG$ are equivalent iff there is an automorphism 
$\varphi: X \to X'$ such that $g \circ \varphi=f$. A covering map is given by the $b$-tuple of its branch points and a finite number of certain monodromy data. This gives a map from the Hurwitz space to the set of unordered $b$-tuples of $b$ distinct points of $\PG$, which can be identified with $\mathbb{P}^b \setminus \Delta$ where $\Delta$ is the discrimination locus.  We can put a topology on $\Hbn$ such that this map becomes a finite unbranched topological covering: Given a $(n,b)$-covering $f: X \to \PG$ with branching points $B=(y_1,\ldots, y_b)$, we can move these points inside disjoint open discs around the points $y_j$ by a homeomorphism of $\PG$ to obtain a new covering of degree $n$ with $b$ branching points, which is a priori just continuous. After removing the branching points and the corresponding fibers of this map,  we can complete it by the Riemann Existence Theorem to a holomorphic $(n,b)$-covering $f_{B'}: X \to \PG$, where the complex structure on the surface $X$ may change. Using the correspondence between topological coverings and subgroups of the fundamental group, we see that $f_{B'}$ only depends on $f$ and the new positions $B'=(y_1',\ldots,y_b')$ of the branch points and not on the chosen homeomorphism.  For that reason, $\Hbn$ can be equipped with a complex structure, so that the Hurwitz space becomes an affine complex manifold of dimension $b$. Relying on calculations of Clebsch \cite{Cl72}, Hurwitz showed in \cite{Hu91} that $\Hbn$ is connected. Due to Fulton and his fundamental article \cite{Fu69}, there is a universal family for $n>2$:
$$
(\beta,f): \X \to \PG \times \Hbn.
$$
This family can be constructed analytically: First using the process of constructing open neighborhoods in $\Hbn$ just described, we can construct them locally. By fact that there are no non-trivial  automorphisms of simple branched $(n,b)$-coverings for $n>2$, it follows that the isomorphisms between two equivalent $(n,b)$-coverings are unique. (An automorphism $\varphi: X \to X$ of a covering with $\varphi \neq \operatorname{id}$ would yield an unbranched covering $X/\varphi \to \PG$ and thus $X/\varphi \cong \PG$, which means $n=2$.) Therefore, the local families can be glued together to give a global family over $\Hbn$. 

Now we discuss briefly the Kobayashi hyperbolicity of the Hurwitz space, since we are especially interested in differential geometric properties of this space. For a simple branched $(n,b)$-covering $\beta_0: X \to \PG$ and an automorphism 
$\la: \PG \to \PG$, the map $\la \circ \lb_0: X \to \PG$ is again a simple branched $(n,b)$-covering, which cannot be equivalent to 
$\lb_0$ for $\la \neq \operatorname{id}$.  Therefore, the $3$-dimensional Lie-group 
$\operatorname{Aut}(\PG)=\operatorname{PGL}(2)$ acts on the Hurwitz space $\Hbn$. Hence, we have that the Hurwitz space 
$\Hbn$ is not hyperbolic (in the sense of Kobayashi). But what if we eliminate the action of $\operatorname{Aut}(\PG)$? We move our point of view to a different category of coverings: Two branched coverings $f: X \to \PG$ and $f': X' \to \PG$ are considered to be equivalent iff there exist biholomorphic maps $\varphi: X \to X'$ and $\psi: \PG \to \PG$ such that $f' \circ \varphi = \psi \circ f$. Now consider the branch points $P_1, \ldots,P_b$ as an ordered $b$-tuple. Because the M\"obius group acts exactly threefold transitively on $\PG$, we can reach $P_{b-2}=0, P_{b-1}=1$ and $P_b=\infty.$ Now we construct the so called \emph{reduced Hurwitz space} $\Hbn_{red}$ as a finite unbranched topological covering of 
$$
[(\PR^1)^b \setminus \bigcup_{i<j}\Delta_{ij}]/\operatorname{PGL(2)}\cong (\PG \setminus \{0,1,\infty\})^{b-3} \setminus \Delta_{b-3},
$$ 
where $\Delta_{b-3}$ is the weak diagonal. This is the point of view in \cite{HM82}. Now the space $\PG \setminus \{0,1,\infty\}$ is hyperbolic as well as $ (\PG \setminus \{0,1,\infty\})^{b-3} \setminus \Delta_{b-3}$ and we get that the reduced Hurwitz space 
$\Hbn_{red}$ is hyperbolic. This space is now a complex manifold of dimension $2n+2g-5$, compare \cite[Th. 3.4.17]{Na79}.
The notion \emph{reduced Hurwitz space} first appeared in the arithmetic theory (\cite{DF99,BF02,Ca08}). But there the branch points are unordered. Sometimes one studies the Hurwitz space by definition as reduced modulo $\operatorname{Aut}(\PG)$, see \cite{Pa13}.

Originally, the Hurwitz space was used as an auxiliary object to study the moduli space $\M$ of compact Riemann surfaces of genus $g>1$. The existence of the universal family 
$\X \to \Hbn$ gives a natural holomorphic map
$$
\Hbn \to \M
$$
by mapping a simple covering $X \to \PG$ to the isomorphism class of $X$. Using appropriate linear systems and Riemann-Roch
(see \cite{Fu69}), one can show that this map is even surjective for $n>g$. By studying the fibers of this map, which have dimension $2n-g+1$, Riemann obtained for the dimension of $\M$ in his famous moduli count \cite{Ri57}:
$$
\dim \M = b - (2n-g+1) = (2n+2g-2)-(2n-g+1)=3g-3.
$$
We will compute the curvature of the fibers of the map $\Hbn \to \M$.

\section{Methods of deformation theory}
\subsection{Horikawa's theory}
In this section, we introduce the Kodaira-Spencer map for deformations of holomorphic maps with fixed target. This goes back to Horikawa (\cite{Ho73,Ho74}). For the classical theory of deformations of complex structures, which is not recalled here, we refer to \cite{Ma05}. Let $Y$ be a fixed compact complex manifold. 
\begin{definition}
A \emph{family of holomorphic maps to $Y$} consists of a family $(\X,p,S)=(X_s)_{s \in S}$ of compact complex manifolds parametrized by a connected complex manifold $S$ together with a holomorphic map $F: \X \to Y$. We collect these data in the quadrupel $(\X,p,S,F)$. We set $f_s=F|_{X_s}: X_s \to Y$ and also denote the family by $(X_s,f_s)_{s \in S}$. 
\end{definition}
Now let $(\X,p,S,F)$ be such a family of holomorphic maps to $Y$ and $s_0 \in S$ a fixed point. We set $X=X_{s_0}$ and 
$f:=F|_X: X \to Y$. We have an exact sequence of coherent sheaves 
\begin{equation}
\label{exse}
0 \to T_{X/Y} \to T_X \xrightarrow{d f} f^* T_Y \xrightarrow{P} N_f \to 0,
\end{equation}
where $N_f$ is the normal sheaf on $f$, i.e. the cokernel of $df$.
After restricting to a neighborhood of $s_0 \in S$ if necessary, we  have the following setting:
\begin{itemize}
\item[(i)] $S$ is an open subset in $\C^r$ with coordinates $s=(s^1,\ldots,s^r)$ and $s_0=(0,\ldots,0)$.
\item[(ii)] $\X$  is covered by a finite number of Stein coordinate neighborhoods $\U_i$ together with  coordinates 
$(z_i,s)=(z_i^1,\ldots,z_i^n,s^1,\ldots,s^r)$ such that $p(z_i,s)=s$.
\item[(iii)] $Y$ is covered by a finite subset of Stein coordinate neighborhoods $V_i$ with local coordinates $w_i=(w_i^1,\ldots,w_i^m)$ such that $F(\U_i) \subset V_i$ and $F$ is given by
$$
w_i=F(z_i,s).
$$
\item[(iv)] On $\U_i \cap \U_j$, we have holomorphic transition functions
$$
z_i=f_{ij}(z_j,s).
$$
\item[(v)] On $V_i \cap V_j$, we have holomorphic transition functions
$$
w_i=g_{ij}(w_j).
$$
\end{itemize}
Then we get the relation
\begin{equation}
\label{vertr}
F(f_{ij}(z_j,s),s)=g_{ij}(F(z_j,s))
\end{equation}
For any tangent vector $\dl/\dl s \in T_{S,0}$ we set
$$
\tau_i:=\sum_{\lc=1}^m{\frac{\dl F_i^{\lc}}{\dl s}{\Big |}_{s=0}\frac{\dl}{\dl w_i^{\lc}}} \in \Gamma(U_i,f^*T_Y) \quad (U_i=\U_i \cap \X).
$$
Then form equality \ref{vertr} we infer that
$$
\tau_j - \tau_i = f_*\left( \sum_{\sigma}{\frac{\dl f_{ij}^{\sigma}}{\dl s}\frac{\dl}{\dl z_i^{\sigma}}}\right),
$$
where 
$$
\theta_{ij}:=\sum_{\sigma}{\frac{\dl f_{ij}^{\sigma}}{\dl s}{\Big |}_{s=0}\frac{\dl}{\dl z_i^{\sigma}}}
$$
is a representative of the Kodaira-Spencer class at $s=0$ of the family $p: \X \to S$. Therefore, the $0$-cochain $(P\tau_i)$ defines an element of $H^0(X,N_f)$. Thus we can define a linear map
$$
\tau: T_{S,0} \to H^0(X,N_f),
$$
the so called \emph{characteristic map} for the family of holomorphic maps. The space $H^0(X,N_f)$ describes the infinitesimal deformations of the holomorphic map $f$, see \cite{Se06}. To get a better description of the elements of this space, we define
\begin{definition}
$$
D_{X/Y} := \frac{\{ (\tau,\theta) \in C^0(\U,f^*T_Y)\times Z^1(\U,T_X) \, : \, \delta \tau = f_* \theta \} }{\{ (f_* \sigma,\delta \sigma) \, : \, \sigma \in C^0(\U,T_X) \} }
$$
\end{definition}
\begin{lemma}
\
\begin{itemize}
\item[(i)] $D_{X/Y}$ does not depend on the choice of the Stein covering.
\item[(ii)] $D_{X/Y}$ is a finite dimensional vector space.
\item[(iii)] We have the following two exact sequences:
\begin{equation}
\label{seq1a}
H^0(X,T_X) \xrightarrow{df} H^0(X,f^*T_Y) \to D_{X/Y} \to H^1(X,T_X) \to H^1(X,f^*T_Y),
\end{equation}
\begin{equation}
\label{seq2}
0 \to H^1(X,T_{X/Y}) \to D_{X/Y} \to H^0(X,N_f) \to H^2(X,T_{X/Y}).
\end{equation}
\end{itemize}
\end{lemma}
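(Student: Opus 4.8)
The plan is to recognize $D_{X/Y}$ as the first hypercohomology of the two-term complex $K^\bullet := [T_X \xrightarrow{df} f^*T_Y]$, with $T_X$ in degree $0$ and $f^*T_Y$ in degree $1$. Computing $\mathbb{H}^\bullet(X,K^\bullet)$ through the \v{C}ech double complex $C^{p,q} := C^p(\U,K^q)$ attached to the Stein covering $\U = (\U_i)$, with total differential $D = \delta + (-1)^p d_K$, one verifies directly that a total $1$-cocycle is precisely a pair $(\tau,\theta) \in C^0(\U,f^*T_Y) \times C^1(\U,T_X)$ with $\delta\theta = 0$ and $\delta\tau = f_*\theta$, while a total $1$-coboundary is precisely $(f_*\sigma, \delta\sigma)$ for $\sigma \in C^0(\U,T_X)$. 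Hence $D_{X/Y} = \mathbb{H}^1(X,K^\bullet)$. Because the $\U_i$ are Stein, the columns of the double complex compute the sheaf cohomology $H^q(X,K^q)$ by Cartan's Theorem B, so the total cohomology is genuine hypercohomology, and part (i) follows from the standard refinement argument: a refinement map of coverings induces an isomorphism on total cohomology, and one passes to the direct limit over all Stein coverings.

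For part (ii) I would feed $K^\bullet$ into the second hypercohomology spectral sequence $E_2^{p,q} = H^p(X,\mathcal{H}^q(K^\bullet)) \Rightarrow \mathbb{H}^{p+q}(X,K^\bullet)$. The cohomology sheaves of $K^\bullet$ are read off from the exact sequence \ref{exse}: $\mathcal{H}^0(K^\bullet) = \ker(df) = T_{X/Y}$ and $\mathcal{H}^1(K^\bullet) = \operatorname{coker}(df) = N_f$, both coherent. Since $X$ is compact, every term $E_2^{p,0} = H^p(X,T_{X/Y})$ and $E_2^{p,1} = H^p(X,N_f)$ is finite-dimensional by the Cartan--Serre finiteness theorem, and the spectral sequence degenerates after the $d_2$ stage; therefore each $\mathbb{H}^n(X,K^\bullet)$, in particular $D_{X/Y}$, is finite-dimensional.

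The two exact sequences in (iii) are the two standard sequences attached to $\mathbb{H}^1(X,K^\bullet)$. Sequence \ref{seq1a} arises from the short exact sequence of complexes given by the stupid filtration, $0 \to f^*T_Y[-1] \to K^\bullet \to T_X \to 0$; its hypercohomology long exact sequence reads $H^{k-1}(X,f^*T_Y) \to \mathbb{H}^k(X,K^\bullet) \to H^k(X,T_X) \to H^k(X,f^*T_Y)$, with connecting homomorphism induced by $df$, and specializing around $k=1$ yields \ref{seq1a}. Sequence \ref{seq2} is the five-term exact sequence of the spectral sequence above: the two-step filtration on $\mathbb{H}^1$ gives $0 \to E_\infty^{1,0} \to D_{X/Y} \to E_\infty^{0,1} \to 0$, where $E_\infty^{1,0} = H^1(X,T_{X/Y})$ (which has neither incoming nor outgoing $d_2$) and $E_\infty^{0,1} = \ker\big(d_2 : H^0(X,N_f) \to H^2(X,T_{X/Y})\big)$, and this is exactly \ref{seq2}.

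I expect the only genuine work to lie in pinning down the edge and connecting maps so that they match the geometric maps intended in the statement. Concretely, one must check that $D_{X/Y} \to H^0(X,N_f)$ is $(\tau,\theta) \mapsto (P\tau_i)$, which is well defined because $P f_* = 0$ by \ref{exse}, and that the transgression $d_2 : H^0(X,N_f) \to H^2(X,T_{X/Y})$ is the obstruction map obtained by lifting a global section of $N_f$ to a $0$-cochain in $f^*T_Y$, applying $\delta$, descending through $df$ to a $1$-cochain of $T_X$, and applying $\delta$ once more. For readers who prefer to avoid spectral sequences, both sequences can instead be produced by an explicit diagram chase on the \v{C}ech bicomplex, defining every map on representatives and checking exactness term by term; this is more laborious but entirely routine.
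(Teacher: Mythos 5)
Your proof is correct, but it is genuinely different from what the paper does: the paper offers no argument at all for this lemma and defers entirely to Horikawa's original article \cite{Ho73}, where the statements are established by explicit manipulations at the level of \v{C}ech cochains (refinement arguments for (i), and hand-built maps plus diagram chases for the sequences in (iii)). You instead identify $D_{X/Y}$ with the first hypercohomology $\mathbb{H}^1(X,K^\bullet)$ of the two-term complex $K^\bullet=[T_X\xrightarrow{df} f^*T_Y]$, which is the modern reformulation; once that identification is made (and your verification of cocycles and coboundaries in the total complex is the right check, matching the paper's definition exactly), parts (i)--(iii) become formal: (i) from the acyclicity of Stein coverings for coherent sheaves and refinement invariance of derived-functor hypercohomology, (ii) from coherence of $T_{X/Y}=\ker(df)$ and $N_f=\operatorname{coker}(df)$ together with Cartan--Serre finiteness on the compact $X$, sequence \ref{seq1a} from the stupid filtration $0\to f^*T_Y[-1]\to K^\bullet\to T_X\to 0$, and sequence \ref{seq2} from the two-row hypercohomology spectral sequence $E_2^{p,q}=H^p(X,\mathcal{H}^q(K^\bullet))$. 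What your route buys is conceptual economy and reusability (the same machinery handles deformations of maps with moving target, higher $\mathbb{H}^k$, etc.); what Horikawa's route buys is that it is elementary and keeps every map explicit on representatives, which matters later in the paper where the identification of the map $D_{X/Y}\to H^0(X,N_f)$ with $(\tau,\theta)\mapsto(P\tau_i)$ is used concretely. You correctly flag that pinning down the edge and connecting maps is the one piece of genuine work in your approach; with that check carried out (it is routine), your argument is a complete and valid substitute for the citation.
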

For a proof see \cite{Ho73}. 
\begin{corollary}
\begin{itemize}
\item[(i)] If $f$ is non-degenerante (i.e. $T_{X/Y}=0$), we have $D_{X/Y}\cong H^0(X,N_f)$.
\item[(ii)] If $f$ is smooth, we have $D_{X/Y} \cong H^1{X,T_{X/Y}}$.
\end{itemize}
\end{corollary}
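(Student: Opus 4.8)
The plan is to deduce both parts directly from the exact sequence \eqref{seq2} of the preceding lemma, by observing that in each case the stated hypothesis forces a pair of flanking terms to vanish, so that the four-term sequence collapses to an isomorphism. No new construction is needed; everything is formal bookkeeping with the sequences already established.

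First I would treat part (i). Here non-degeneracy means precisely $T_{X/Y}=0$, and since the cohomology of the zero sheaf vanishes in all degrees, both the leftmost term $H^1(X,T_{X/Y})$ and the rightmost term $H^2(X,T_{X/Y})$ of \eqref{seq2} drop out. What remains is the short exact sequence
$$
0 \to D_{X/Y} \to H^0(X,N_f) \to 0,
$$
whose exactness gives the asserted isomorphism $D_{X/Y}\cong H^0(X,N_f)$.

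For part (ii) I would first translate the smoothness hypothesis: if $f$ is a submersion then $df$ has maximal rank everywhere, hence is surjective as a map of sheaves, so by the exact sequence \eqref{exse} its cokernel $N_f$ vanishes. Then $H^0(X,N_f)=0$, and the relevant segment of \eqref{seq2} becomes
$$
0 \to H^1(X,T_{X/Y}) \to D_{X/Y} \to 0,
$$
yielding $D_{X/Y}\cong H^1(X,T_{X/Y})$.

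I do not expect any genuine obstacle, since the entire analytic content is packaged in the lemma, whose sequences \eqref{seq1a} and \eqref{seq2} I am free to assume. The only steps needing attention are definitional: matching \emph{non-degenerate} to the condition $T_{X/Y}=0$, and \emph{smooth} to the surjectivity of $df$ and hence to $N_f=0$. Once these translations are in place, both isomorphisms follow purely formally from exactness, and part (i) could equally be read off the tail of \eqref{seq1a} as a consistency check.
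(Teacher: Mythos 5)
Your proof is correct and is exactly the argument the paper intends: the corollary is stated without proof precisely because both parts follow formally from the exact sequence \eqref{seq2}, with the vanishing of $H^1(X,T_{X/Y})$ and $H^2(X,T_{X/Y})$ in case (i), and of $N_f$ (hence $H^0(X,N_f)$) in case (ii). Only your closing aside is slightly off: reading part (i) off \eqref{seq1a} would additionally require compatibility of that sequence with the long exact cohomology sequence of \eqref{exse}, so it is not a pure ``consistency check''--- but this does not affect your main argument.
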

There is also a Dolbeault-type description of the space $D_{X/Y}$ also due to Horikawa:
\begin{proposition}
$$
D_{X/Y} \cong \frac{ \{(\xi,\vartheta) \in A^{0,0}(X,f^*T_Y)\times A^{0,1}(T_X) \, : \, \dbar \xi = f_* \vartheta, \dbar \vartheta =0\} }{ \{ (f_*\zeta,\dbar \zeta): \zeta \in A^{0,0}(X,T_X) \} } =:D'_{X/Y}
$$
\end{proposition}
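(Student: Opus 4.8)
The statement is the combined \v{C}ech--Dolbeault comparison for Horikawa's deformation space $D_{X/Y}$, so the plan is to imitate the proof of Dolbeault's theorem while carrying the extra datum $\tau$ (respectively $\xi$) and the compatibility condition along. The two technical inputs are the fineness of the sheaf $\mathcal{A}^{0,0}(T_X)$, i.e. the existence of a partition of unity subordinate to $\U$, in one direction, and the $\dbar$-Poincar\'e lemma on the Stein opens $\U_i$, i.e. solvability of $\dbar\eta=\vartheta$ for $\dbar$-closed $\vartheta$, in the other. Throughout one uses that $f$ is holomorphic, so the bundle map $f_*=df$ commutes with $\dbar$. I would construct explicit maps both ways and show they descend to the quotients and are mutually inverse.

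For the forward map $D_{X/Y}\to D'_{X/Y}$, start from a representative $(\tau,\theta)=((\tau_i),(\theta_{ij}))$ and first split the holomorphic cocycle $\theta$ smoothly: using a partition of unity, produce $\eta_i\in A^{0,0}(U_i,T_X)$ with $\theta_{ij}=\eta_i-\eta_j$. Since $\dbar\theta_{ij}=0$, the local forms $\dbar\eta_i$ agree on overlaps and glue to a global $\dbar$-closed form, which after a sign choice I set as $\vartheta:=-\dbar\eta_i$. From $\tau_i-\tau_j=f_*\theta_{ij}=f_*\eta_i-f_*\eta_j$ the local sections $\xi:=\tau_i-f_*\eta_i$ agree on overlaps and define a global $\xi\in A^{0,0}(X,f^*T_Y)$, and since $\tau_i$ is holomorphic one gets $\dbar\xi=-f_*\dbar\eta_i=f_*\vartheta$, so $(\xi,\vartheta)\in D'_{X/Y}$. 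For the backward map, given $(\xi,\vartheta)$ I solve $\dbar\eta_i=-\vartheta$ on each Stein $U_i$ by the $\dbar$-Poincar\'e lemma; on overlaps $\eta_i-\eta_j$ is $\dbar$-closed, hence $\theta_{ij}:=\eta_i-\eta_j\in Z^1(\U,T_X)$, and setting $\tau_i:=\xi+f_*\eta_i$ gives $\dbar\tau_i=f_*\vartheta+f_*(-\vartheta)=0$ together with $\delta\tau=f_*\theta$.

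Next I would verify well-definedness and that the two maps are inverse. Changing the smooth splitting $\eta_i$ (respectively the local $\dbar$-primitives) by a global smooth section $\zeta$ (respectively a holomorphic cochain $h$) alters the output by exactly a relation of the form $(f_*\zeta,\dbar\zeta)$ (respectively $(f_*\sigma,\delta\sigma)$), so the classes in the quotients are unchanged. A \v{C}ech coboundary $(f_*\sigma,\delta\sigma)$, for which $\sigma$ is itself a holomorphic splitting, maps forward to $(0,0)$ since then $\vartheta=-\dbar\sigma_i=0$ and $\xi=f_*\sigma_i-f_*\sigma_i=0$; symmetrically a Dolbeault coboundary $(f_*\zeta,\dbar\zeta)$, for which $\zeta$ is a global primitive, maps backward to $(0,0)$. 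Finally, composing the two constructions one may reuse the very same $\eta_i$ as splitting and as $\dbar$-primitive, recovering the original representative on the nose, which shows the maps are inverse to one another.

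The main obstacle --- really the only delicate point --- is the bookkeeping that matches the two equivalence relations: one must check that, under the construction, the smooth coboundaries $(f_*\sigma,\delta\sigma)$ of the \v{C}ech model correspond precisely to the $\dbar$-coboundaries $(f_*\zeta,\dbar\zeta)$ of the Dolbeault model, with the signs and the commutation of $f_*$ with $\dbar$ kept consistent throughout. An alternative that sidesteps the explicit inverse would be to fit the forward map into a ladder between the five-term exact sequence of the preceding Lemma and its Dolbeault analogue, whose four outer vertical arrows are the classical Dolbeault isomorphisms for $H^q(X,T_X)$ and $H^q(X,f^*T_Y)$, and then invoke the five lemma; but the direct construction above is more self-contained, and the local $\dbar$-solvability on the Stein pieces $\U_i$ is exactly where the hypothesis on the covering enters.
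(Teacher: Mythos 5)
Your proof is correct and takes essentially the same route as the paper's: in both, the forward map is built by smoothly splitting the cocycle $\theta_{ij}$ via a cochain $\eta_i$ and setting $\vartheta=\pm\dbar\eta_i$, $\xi=\tau_i\mp f_*\eta_i$, the backward map by solving $\dbar\eta_i=\mp\vartheta$ locally on the Stein sets, followed by the same well-definedness checks (change of $\eta$, coboundaries mapping to the trivial class). Your explicit mention of the technical inputs (fineness of $\mathcal{A}^{0,0}(T_X)$, the $\dbar$-Poincar\'e lemma) and of the fact that reusing the same $\eta_i$ in both constructions makes the maps mutually inverse only spells out what the paper leaves implicit.
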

\begin{proof}
Let $(\tau,\theta) \in C^0(\U,f^*T_Y) \times Z^1(\U,T_X)$ be a representative of a class in $D_{X/Y}$. We set $\tau=(\tau_i)$ and 
$\theta=(\theta_{ij})$. We choose $\eta_i \in \Gamma(U_i,\A^{0,0}(T_X))$ with $-\theta_{ij}=\eta_j - \eta_i$ in $U_{ij}$. Then we define  $\vartheta \in A^{0,1}(X,T_X)$ by
$$
\vartheta = \dbar \eta_i \quad \mbox{ on } U_i,
$$
i.e. $\vartheta$ is a Dolbeault representative of the class $[\theta]$. 
On the other hand we have $\tau_j - \tau_i = -f_*\eta_j + f_* \eta_i$ in $U_{ij}$. Thus we define $\xi \in A^{0,0}(f^*T_Y)$ by 
$$
\xi = \tau_i + f_*\eta_i \quad \mbox{ on } U_i.
$$
Both objects are related by
$$
\dbar \xi = f_* \vartheta.
$$	
If we have a second cochain $\eta'=(\eta_i') \in C^0(\U,\A^{0,0}(T_X))$ with $-\theta_{ij}=\eta_j - \eta_i$, then $\eta$ and $\eta'$  differ by a $\zeta \in A^{0,0}(T_X)$. The forms $\vartheta$ and $\xi$ then become $\vartheta + \dbar \zeta$ and $\xi + f_*\zeta$. If we start with the representative $(f_*\sigma,\delta \sigma)$ with 
$\sigma \in C^0(\U,T_X)$ belonging to the trivial class in $D_{X/Y}$, we can choose $\eta_i=-\sigma|_{U_i}$ and get 
$\vartheta=\dbar(-\sigma)=0$ as well as 
$\xi=f_* \sigma + f_*(-\sigma)=0.$ Therefore, the class $(\xi,\vartheta)$ does not depend on the chosen representative. 

Conversely, if $(\xi,\vartheta)$ with $\dbar \xi = f_* \vartheta$ and $\dbar \vartheta=0$ are given, we find $\eta_i \in \Gamma(U_i,\A^{0,0}(T_X))$ such that
$\vartheta=\dbar \eta_i$ in $U_i$. We set $\theta_{ij}=-\eta_j + \eta_i$ in $U_{ij}$ and $\tau_i=\xi - f_*\eta_i$ in $U_i$. Then $\dbar \theta_{ij}=\dbar \tau_i=0,$ i.e. 
$\theta=(\theta_{ij}) \in Z^1(\U,T_X)$ and $\tau=(\tau_i) \in C^0(\U,f^*T_Y)$. Furthermore, the relation $\tau_j-\tau_i=f_*\theta$ holds. If we have a second cochain $\eta'=(\eta_i') \in C^0(\U,\A^{0,0}(T_X))$ such that $\vartheta=\dbar \eta_i'$ in $U_i$, then 
$\eta$ and $\eta'$ differ by $\sigma \in C^0(\U,T_X)$; $\theta$ and $\tau$ become 
$\theta + \delta \sigma$ and $\tau + f_*\sigma$ respectively. Hence, the class $(\tau,\theta)$ in $D_{X/Y}$ is well-defined. Furthermore, we see that a pair of the form $(f_*\zeta,\dbar \zeta)$ with $\zeta \in A^{0,0}(T_X)$  is mapped on the trivial class $(0,0)$. This gives the desired isomorphism.
\end{proof}
After recalling the work of Horikawa, we would like to give a constructive description of how to produce representatives of the characteristic map in $D'_{X/Y}$:
\begin{proposition}
\label{diffbarehochhebung}
Let $(\X,p,S,F)$ be a family of non-degenerate holomorphic maps to $Y$. For a point $s_0 \in S$, we set $X=X_{s_0}$ and 
$f=F|_X: X \to Y$. Let $\dl/\dl s \in T_{S,s_0}$ be a tangent vector. Extend this vector to a local holomorphic vector field in a neighborhood
$V \subset S$ of $s_0 \in S$. Let $\chi \in \Gamma(p^{-1}(V),\A^{0,0}(T_{\X}))$ be a differentiable lift of this vector field. Then the class $\tau_{s_0}(\dl/\dl s)$ is represented in $D'_{X/Y}$ by the pair $(F_*(\chi)|_X,\dbar(\chi)|_X)$.   
\end{proposition}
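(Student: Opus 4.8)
The plan is to reduce the statement to the Čech-to-Dolbeault comparison established in the previous proposition by matching the data produced by the differentiable lift $\chi$ with the specific choices made there. The organizing observation is that each chart $\U_i$ with coordinates $(z_i,s)$ carries a canonical \emph{holomorphic} local lift of $\dl/\dl s$, namely the coordinate field $\dl/\dl s$ taken with $z_i$ held fixed; call it $\sigma_i$. Since $F$ is given by $w_i=F_i(z_i,s)$, the definition of $\tau_i$ in the construction of the characteristic map reads exactly $\tau_i=F_*(\sigma_i)|_X$, and the Kodaira--Spencer cocycle appears as the discrepancy of these canonical lifts: differentiating the transition relation $z_i=f_{ij}(z_j,s)$ in $s$ gives $(\sigma_j-\sigma_i)|_X=\theta_{ij}$. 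Any differentiable lift $\chi$ of $\dl/\dl s$ can then be written on $\U_i$ as $\chi=\sigma_i+\mu_i$, where $\mu_i:=\chi-\sigma_i$ is a differentiable vertical $(1,0)$-vector field, and I set $\eta_i:=\mu_i|_X\in\Gamma(U_i,\A^{0,0}(T_X))$.

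First I would verify that the collection $(\eta_i)$ is an admissible choice in the sense of the previous proposition. Subtracting the two descriptions of $\chi$ over $U_{ij}$ gives $\eta_j-\eta_i=(\sigma_i-\sigma_j)|_X=-\theta_{ij}$, which is precisely the relation $-\theta_{ij}=\eta_j-\eta_i$ imposed there. With this identification the two components fall out directly. Because $\sigma_i$ is holomorphic we have $\dbar\chi=\dbar\mu_i$ on $\U_i$, and restriction to the fiber yields $\dbar(\chi)|_X=\dbar\eta_i$, which is the Dolbeault representative $\vartheta$ of $[\theta]$. Similarly, linearity of $F_*$ gives $F_*(\chi)=F_*(\sigma_i)+F_*(\mu_i)$, and restriction to $X$ together with $f=F|_X$ produces $F_*(\chi)|_X=\tau_i+f_*\eta_i$, which is exactly the section $\xi$ built in the previous proposition. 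Thus the pair $(F_*(\chi)|_X,\dbar(\chi)|_X)$ coincides with the image of the Čech representative $(\tau_i,\theta_{ij})$ of $\tau_{s_0}(\dl/\dl s)$ under the isomorphism $D_{X/Y}\cong D'_{X/Y}$, and non-degeneracy ($T_{X/Y}=0$) identifies this with the class in $H^0(X,N_f)$.

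The one place demanding care, and which I expect to be the main subtlety, is the interaction of $\dbar$ with restriction to the fiber and with pushforward by $F$. First, restriction commutes with $\dbar$ up to tangential components: since $s$ is constant on $X$ the forms $d\sbar^\alpha$ restrict to zero, so $(\dbar\mu_i)|_X=\dbar(\mu_i|_X)=\dbar\eta_i$. Second, because $F$ is holomorphic its differential has holomorphic entries, so $\dbar$ commutes with pushforward, $\dbar(F_*\chi)=F_*(\dbar\chi)$; restricting to $X$ then confirms the compatibility $\dbar\xi=f_*\vartheta$ and $\dbar\vartheta=0$ needed for membership in $D'_{X/Y}$. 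Finally, I would record independence of the chosen lift: two differentiable lifts of $\dl/\dl s$ differ by a global vertical field whose restriction $\zeta\in A^{0,0}(X,T_X)$ changes the pair by $(f_*\zeta,\dbar\zeta)$, exactly the relations quotiented out in $D'_{X/Y}$. Hence the represented class is well defined and equals $\tau_{s_0}(\dl/\dl s)$.
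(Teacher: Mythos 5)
Your proof is correct and follows essentially the same route as the paper: you write the differentiable lift locally as the coordinate (holomorphic) lift $\sigma_i$ plus a vertical correction $\mu_i$, identify $\eta_i=\mu_i|_X$ as the splitting cochain used in the \v{C}ech-to-Dolbeault isomorphism of the preceding proposition, and read off $(F_*(\chi)|_X,\dbar(\chi)|_X)=(\tau_i+f_*\eta_i,\dbar\eta_i)$. The only differences are cosmetic: you verify directly (rather than citing Manetti) that $\dbar(\chi)|_X$ represents the Kodaira--Spencer class, and you add an explicit well-definedness check, both of which are consistent with the paper's sign conventions.
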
 
\begin{proof}
Let $\U=(\U_i)$ be a locally finite Stein open covering of $p^{-1}(V)$ by coordinate neighborhoods in which $p$ is just a projection.
Then $\tau(\dl / \dl s) \in D_{X/Y}$ is given by the pair $(\tau,\theta) \in C^0(\U,f^*T_Y)\times Z^1(\U,T_X)$, where  $\delta \tau = f_* \theta$. In local coordinates,
$\tau=(\tau_i)$ has the form
$$
\tau_i = \sum_{\lc}{\frac{\dl F_i^{\lc}}{\dl s}}{\Big |}_{s=s_0} \frac{\dl}{\dl w_i^{\lc}} \in \Gamma(U_i,f^*T_Y) \quad (U_i=\U_i \cap X)
$$
We note that $\tau_i=F_*(\dl / \dl s)|_X$, where we view $s$ as a local coordinate on $\U_i$. 
The cocycle $\theta=(\theta_{ij})$ defined by
$$
\theta_{ij}=\sum_{\sigma}{\frac{\dl f_{ij}^{\sigma}}{\dl s}{\Big |}_{s=s_0} \frac{\dl}{\dl z_i^{\sigma}}}
$$
is representing the Kodaira-Spencer class $\rho(\dl / \dl s)$. We already know that $\dbar(\chi)|_X$ is a Dolbeault representative of $\theta$ (see \cite{Ma05}). Write $\chi$ locally as
$$
\chi = \sum_{\la}{\chi_i^{\la}(z_i,s)\frac{\dl}{\dl z_i^{\la}}+ \frac{\dl}{\dl s}} \mbox{  in } \U_i. 
$$
Since $\dbar(\chi - \dl/\dl s)= \dbar(\chi)$, the $0$-cochain $\eta=(\eta_i)$ defined by
$$
\eta_i = \sum_{\la}{\chi_i^{\la}(z_i,s)\frac{\dl}{\dl z_i^{\la}}} \in \Gamma(U_i,\D(T_{\X/S}))
$$
yields a differentiable splitting of $\theta$. After the proof of the preceding lemma, $\tau(\dl / \dl s) \in D'_{X/Y}$  is thus given by the pair
$(\xi,\theta)$, where 
$$
\xi = \tau_i + f_*\eta_i \quad \mbox{ in } U_i.
$$  
From this it finally follows $\xi=F_*(\chi)|_X$.
\end{proof} 

\subsection{Refining Horikawa's theory in the presence of hermitian metrics}
In this subsection, we consider more specifically families of non-degenerate holomorphic maps $f_s: X_s \to Y$, where the fibers 
$X_s$ are compact hermitian manifolds. In this case, any Dolbeault class in $H^1(X_s,T_{X_s})$ has a unique harmonic representative (with respect to $\laplacedbar$). We fix a point $s_0 \in S$ and the corresponding fiber $X=X_{s_0}$ together with the map $f=f_{s_0}$. The monomorphism of sheaves 
$f_*: T_X \to f^*T_{Y}$ induces a monomorphism
$$
f_*: A^{0,1}(T_X) \to A^{0,1}(f^*T_Y)
$$
of global $C^{\infty}$-forms of type $(0,1)$. By means of this map $f_*$, we can view in particular the harmonic $(0,1)$-forms on 
$X$ with values in $T_X$ as $(0,1)$-forms with values in $f^*T_Y$. (Note that the images in $A^{0,1}(f^*T_Y)$ are in general no longer harmonic with respect to a hermitian metric on $Y$.) We denote the space of harmonic forms by $\Harm^{0,1}(X,T_X)$.

\begin{proposition}
\label{HXY}
$$
D'_{X/Y} \cong \frac{\{ \chi \in A^{0,0}(f^*T_Y) \, : \, \dbar \chi \in f_*(\Harm^{0,1}(X,T_X)) \}}{\{f_*(\zeta) \; : \;   \zeta \in H^0(X,T_X) \}} =: H_{X/Y}
$$
\end{proposition}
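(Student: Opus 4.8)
The plan is to construct mutually inverse linear maps between the two spaces by always replacing the ``$T_X$-component'' of a Horikawa pair by its harmonic representative. Two facts drive everything. First, Hodge theory for $\dbar$ on the compact hermitian fiber $X$: every $\dbar$-closed $\vartheta \in A^{0,1}(T_X)$ splits orthogonally as $\vartheta = \Hbar(\vartheta) + \dbar(\dbarstar\Gbar\vartheta)$ with $\Hbar(\vartheta) \in \Harm^{0,1}(X,T_X)$ harmonic, and a harmonic form that is also $\dbar$-exact must vanish. Second, since $f$ is non-degenerate the induced map $f_*\colon A^{0,1}(T_X)\to A^{0,1}(f^*T_Y)$ is injective, so any $\chi$ with $\dbar\chi\in f_*(\Harm^{0,1}(X,T_X))$ determines a unique harmonic form $\vartheta^H:=(f_*)^{-1}(\dbar\chi)$.

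First I would define $\Phi\colon H_{X/Y}\to D'_{X/Y}$ by sending $\chi$ to the class of the pair $(\chi,\vartheta^H)$; this lies in $D'_{X/Y}$ because $\dbar\chi=f_*\vartheta^H$ and $\vartheta^H$, being harmonic, is $\dbar$-closed. It descends to the quotients: replacing $\chi$ by $\chi+f_*\zeta$ with $\zeta\in H^0(X,T_X)$ leaves $\dbar\chi$ (hence $\vartheta^H$) unchanged, and alters the pair by $(f_*\zeta,\dbar\zeta)=(f_*\zeta,0)$, which lies in the denominator of $D'_{X/Y}$. For the inverse $\Psi$ I would send $[(\xi,\vartheta)]$ to the class of $\chi:=\xi-f_*(\dbarstar\Gbar\vartheta)$; the Hodge decomposition gives $\dbar\chi=f_*\vartheta-f_*\dbar(\dbarstar\Gbar\vartheta)=f_*\Hbar(\vartheta)$, so $\chi$ lands in the numerator of $H_{X/Y}$. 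To see $\Psi$ is well defined I would change the representative by $(f_*\sigma,\dbar\sigma)$ and, using $\Gbar\dbar=\dbar\Gbar$ together with the degree-zero Hodge identity $\sigma-\dbarstar\dbar\Gbar\sigma=\Hbar(\sigma)$, check that $\chi$ changes precisely by $f_*(\Hbar(\sigma))$ with $\Hbar(\sigma)\in H^0(X,T_X)$ a holomorphic vector field.

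That $\Phi$ and $\Psi$ are mutually inverse is then a short computation, using that $\Gbar$ annihilates harmonic forms for $\Psi\circ\Phi=\id$, and setting $\sigma=\dbarstar\Gbar\vartheta$ for $\Phi\circ\Psi=\id$. I expect the only genuinely delicate point to be injectivity, which is also the conceptual content of the statement: if $(\chi,\vartheta^H)$ becomes trivial in $D'_{X/Y}$, then $\vartheta^H=\dbar\zeta$ is harmonic and exact, hence zero, forcing $\zeta\in H^0(X,T_X)$ and $\chi=f_*\zeta$ already trivial in $H_{X/Y}$. This is exactly the bookkeeping by which contracting the denominator from all of $A^{0,0}(T_X)$ down to $H^0(X,T_X)$ offsets the restriction of the second component to harmonic forms; keeping the injectivity of $f_*$ in play throughout is what makes the two reductions match.
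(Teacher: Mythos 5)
Your proof is correct and takes essentially the same route as the paper: both arguments rest on replacing the $T_X$-valued $(0,1)$-component by its harmonic representative (you make the canonical choice of potential $-\dbarstar\Gbar\vartheta$, the paper picks an abstract $\zeta$ with $\vartheta+\dbar\zeta$ harmonic) and on the injectivity of $f_*$ to see that the shrunken denominator $\{f_*\zeta : \zeta \in H^0(X,T_X)\}$ exactly compensates the restriction to harmonic forms. The only difference is organizational: the paper first collapses $D'_{X/Y}$ to a space of single vector fields $\xi$ with $\dbar\xi \in f_*(A^{0,1}(T_X))$ and proves the inclusion-induced map is bijective, whereas you keep the pairs $(\xi,\vartheta)$ and exhibit explicit mutually inverse maps via the Green operator.
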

\begin{proof}
Since $f$ is non-degenerate, we can simplify $D'_{X/Y}$ to  
$$
D'_{X/Y} \cong \frac{\{ \xi \in A^{0,0}(X,f^*T_Y) \; : \; \dbar \xi \in f_*(A^{0,1}(T_X)) \}}{ \{  f_*(\zeta) \; : \; \zeta \in A^{0,0}(T_X) \} } =: D''_{X/Y}
$$
Then there is a natural map from $H_{X/Y}$ to $D''_{X/Y}$. We have to show that this map is indeed bijective.
Let $\xi \in A^{0,0}(f^*T_Y)$ such that $\dbar \xi = f_* \vartheta$  for a $\vartheta \in A^{0,1}(T_X)$. There exists a $\zeta \in A^{0,0}(T_X)$ such that $\vartheta'=\vartheta + \dbar \zeta$ is harmonic. Then $\chi = \xi + f_* \zeta$ lies in the same class as $\xi$ in $D''_{X/Y}$ and we have  $\dbar \chi = f_* \vartheta'$. If $\chi, \chi' \in A^{0,0}(f^*T_Y)$ are given such that $\dbar \chi = f_* \vartheta$ and $\dbar \chi' = f_* \vartheta'$ for harmonic forms $\vartheta, \vartheta' \in A^{0,1}(T_X)$ and $\chi' = \chi + f_* \zeta$, then it follows by
$f_* \vartheta' = \dbar \chi' = \dbar (\chi + f_* \zeta) = f_* (\vartheta + \dbar \zeta)$ and the injectivity of $f_*$ that $\vartheta' = \vartheta + \dbar \zeta$. Thus the two harmonic forms $\vartheta$ and $\vartheta'$ are Dolbeault equivalent,  hence coincide. It follows $\dbar \zeta =0$, i.e. $\zeta$ is a global holomorphic vector field on $X$. Therefore, the vector fileds 
$\chi$ and $\chi'$ determine the same class  in $H_{X/Y}$.  
\end{proof}

\begin{remark}
We consider the case $H^0(X,T_X)=0$. We get
$$
D_{X/Y} \cong \{ \chi \in A^{0,0}(f^*T_Y) \, : \, \dbar \chi \in f_*(\Harm^{0,1}(X,T_X)) \} = H_{X/Y}
$$
and we write the exact sequence $\ref{seq1a}$ in the form
$$
0 \to H^0(X,f^*T_Y) \xrightarrow{\la} H_{X/Y} \xrightarrow{\lb} f_*(\Harm^{0,1}(X,T_X)) \xrightarrow{\lc} H^1(X,f^*T_Y) 
$$
Now all maps have a very simple description: $\la$ can be read as an inclusion. The map $\lb$ is simply $\dbar$ and $\lc$ assigns to a form $f_*(\vartheta) \in A^{0,1}(X,f^*T_Y)$ its Dolbeault class in $H^1(X,f^*T_Y)$. The exactness comes immediately. 
\end{remark}
Form now on let also $Y$ be equipped with a hermitian metric $h$. The pullback  $(f^*T_Y,f^*h)$ is then a hermitian bundle on 
$X$. We denote the adjoint operator of $\dbar$ with respect to $f^*h$ on the space $A^{0,1}(X,f^*T_Y)$ by $\dbar^*_h$. On the space $A^{0,0}(X,f^*T_Y)$ and $A^{0,1}(X,f^*T_Y)$, we denote the harmonic projection and the Green operator by $H$ and $G$ respectively. It is well-kown that $G$ commutes with $\dbar$ and $\dbar^*_h$.
\begin{proposition}
\label{Pos}
Let $H^0(X,T_X)=0=H^1(X,f^*T_Y)$. We interpret the exact sequence
$$
0 \to H^0(X,f^*T_Y) \to H^0(X,N_f) \to H^1(X,T_X) \to 0
$$
as
$$
0 \to H^0(X,f^*T_Y) \xrightarrow{\iota} \{\chi \in A^{0,0}(f^*T_Y) \, : \, \dbar \chi \in f_*(\Harm^{0,1}(X,T_X)) \} \xrightarrow{\dbar} f_*(\Harm^{0,1}(X,T_X)) \to 0.
$$
Then we have a splitting by means of the following maps:
$$
	 0 \xleftarrow{}  H^0(X,f^*T_Y) \xleftarrow{H}  H_{X/Y} \xleftarrow{G \dbar^*_h}  f_*(\Harm^{0,1}(X,T_X)) \xleftarrow{} 0 
$$
\end{proposition}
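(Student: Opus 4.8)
The plan is to check that $H$ and $G\dbar^*_h$ realize the desired algebraic splitting of the short exact sequence of the statement. By the preceding remark this sequence is genuinely short exact precisely because $H^1(X,f^*T_Y)=0$ makes $\dbar$ surject onto $f_*(\Harm^{0,1}(X,T_X))$, while $H^0(X,T_X)=0$ removes the quotient in $H_{X/Y}$. Writing $A=H^0(X,f^*T_Y)$, $B=H_{X/Y}$ and $C=f_*(\Harm^{0,1}(X,T_X))$, I would establish three things: that $G\dbar^*_h$ sends $C$ into $B$ and is a right inverse of $\dbar$, that $H$ is a left inverse of the inclusion $\iota$, and that the two together satisfy $\id_B=\iota\circ H+(G\dbar^*_h)\circ\dbar$.

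The main input is the Hodge theory on $A^{0,1}(X,f^*T_Y)$ taken with respect to $f^*h$. Since $H^1(X,f^*T_Y)=0$ there are no nonzero $f^*h$-harmonic $(0,1)$-forms, so every $\psi\in A^{0,1}(f^*T_Y)$ satisfies $\psi=\laplacedbar G\psi=\dbar\dbar^*_h G\psi+\dbar^*_h\dbar G\psi$, using that $G$ commutes with $\dbar$ and $\dbar^*_h$. The decisive observation is that an element $\psi=f_*\vartheta$ of $C$ is $\dbar$-closed: the form $\vartheta$ is $T_X$-harmonic, hence in particular $\dbar$-closed, and $f_*$ is a holomorphic bundle homomorphism, so $\dbar\psi=f_*(\dbar\vartheta)=0$. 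Feeding this into the identity annihilates the second summand and yields $\psi=\dbar(G\dbar^*_h\psi)$. This simultaneously shows that $\dbar(G\dbar^*_h\psi)=\psi$ lies in $C$, so $G\dbar^*_h\psi\in B$, and that $\dbar\circ G\dbar^*_h=\id_C$.

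For the retraction I would invoke the Hodge decomposition on $A^{0,0}(X,f^*T_Y)$: a $(0,0)$-form is annihilated by $\dbar^*_h$, so there the Laplacian reduces to $\laplacedbar=\dbar^*_h\dbar$, and for $\chi\in B$ one obtains $\chi=H\chi+\laplacedbar G\chi=H\chi+G\dbar^*_h(\dbar\chi)$. Since the $(0,0)$-harmonic forms are exactly the holomorphic sections, $H\chi\in A$ and $H\circ\iota=\id_A$, while the displayed identity is precisely $\id_B=\iota\circ H+(G\dbar^*_h)\circ\dbar$. That $H\circ G\dbar^*_h=0$ then follows from the image of $G$ being $L^2$-orthogonal to the harmonic forms, confirming compatibility of the two maps.

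The step I expect to be delicate is the bookkeeping between the two competing Hodge theories: the forms in $C$ are harmonic for the metric on $T_X$ over $X$, whereas the Green operator, the adjoint and the projection in the splitting all refer to $f^*h$. The argument only closes because the sole feature of $C$ actually used is $\dbar$-closedness, which survives $f_*$, while the vanishing $H^1(X,f^*T_Y)=0$ removes the $f^*h$-harmonic obstruction that would otherwise keep $G\dbar^*_h$ from being an exact right inverse of $\dbar$. I would therefore state these two reductions explicitly rather than conflate the two Laplacians.
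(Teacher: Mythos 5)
Your proof is correct and takes essentially the same approach as the paper: the paper's entire proof is the Hodge identity $\id = H + G\laplacedbar$ with $\laplacedbar = \dbar^*_h\dbar$ on $A^{0,0}(X,f^*T_Y)$, and the right-inverse property $\dbar\, G\dbar^*_h = \id$ on $f_*(\Harm^{0,1}(X,T_X))$ that you derive from $H^1(X,f^*T_Y)=0$ is exactly the content of the remark immediately following the proposition. You simply make explicit the $\dbar$-closedness of elements of $f_*(\Harm^{0,1}(X,T_X))$ and the orthogonality checks that the paper leaves implicit.
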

\begin{proof}
We have the identity $\id = H + G \laplacedbar$, where $\laplacedbar = \dbar^*_h \dbar$ for elements in $A^{0,0}(X,f^*T_Y)$.
\end{proof} 
\begin{remark}
Since $H^1(X,f^*T_Y)=0$, there are no harmonic $(0,1)$-forms with values in $f^*T_Y$. Hence, the Laplace operator 
$\laplacedbar= \dbar^*_h \dbar$ is invertible on the space $A^{0,1}(X,f^*T_Y)$ and the inverse operator is the Green operator on 
$A^{0,1}(X,f^*T_Y)$. Therefore, we have in particular 
$u=G \dbar \dbar^*_h u = \dbar G \dbar^*_h u$ for all $u \in f_*(\Harm^{0,1}(X,T_X))$.
\end{remark}    
\begin{proposition}
\label{Neg} 
Now let $H^0(X,T_X)=0=H^0(X,f^*T_Y)$. We write the exact sequence
$$
0 \to H^0(X,N_f) \to H^1(X,T_X) \to H^1(X,f^*T_Y) \to 0
$$
as
$$
0 \to \{\chi \in A^{0,0}(f^*T_Y) \, : \, \dbar \chi \in F(\Harm^{0,1}(X,T_X)) \} \xrightarrow{\dbar} f_*(\Harm^{0,1}(X,T_X)) \xrightarrow{H} \Harm^{0,1}(X,f^*T_Y)  \to 0,
$$
Then we have a splitting by means of the following maps:
$$
	 0 \xleftarrow{}  H_{X/Y} \xleftarrow{\dbar^*_h G}  f_*(\Harm^{0,1}(X,T_X)) \xleftarrow{\iota} \Harm^{0,1}(X,f^*T_Y) \xleftarrow{} 0 
$$
\end{proposition}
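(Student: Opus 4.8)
The plan is to mirror the one-line proof of Proposition \ref{Pos}, exploiting the Hodge decomposition $\id = H + G\laplacedbar$ on $f^*T_Y$-valued forms, but now using the vanishing $H^0(X,f^*T_Y)=0$ in the role that $H^1(X,f^*T_Y)=0$ played before. First I would record the identifications that turn the cohomological sequence into the analytic one stated in the proposition: the left-hand term is $H_{X/Y}$ by Proposition \ref{HXY} and the Remark following it (with $H^0(X,T_X)=0$ there is no quotient, so $H^0(X,N_f)\cong H_{X/Y}$); the middle term $H^1(X,T_X)$ is realized as $f_*(\Harm^{0,1}(X,T_X))$ by Hodge theory on $(X,T_X)$ followed by the monomorphism $f_*$; and the last term $H^1(X,f^*T_Y)$ is realized as $\Harm^{0,1}(X,f^*T_Y)$ by Hodge theory for the hermitian bundle $(f^*T_Y,f^*h)$. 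Under these identifications the connecting map becomes $\dbar$ and the map $H^1(X,T_X)\to H^1(X,f^*T_Y)$ becomes the harmonic projection $H$. Exactness on the left is the statement $\ker\dbar = H^0(X,f^*T_Y)=0$, exactness in the middle says $\ker H$ consists of the $\dbar$-exact classes, and surjectivity of $H$ is the asserted surjectivity of the cohomology map.

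Next I would verify the retraction identity $\dbar^*_h G\,\dbar = \id$ on $H_{X/Y}$, which is the analytic heart of the statement. On $A^{0,0}(X,f^*T_Y)$ one has $\laplacedbar = \dbar^*_h\dbar$ together with $\id = H + G\laplacedbar$; since $H^0(X,f^*T_Y)=0$ the degree-zero harmonic projection $H$ is the zero map, so $\id = G\,\dbar^*_h\dbar$ there. Because $G$ commutes with $\dbar^*_h$, this reads $\id = \dbar^*_h G\,\dbar$, so $\dbar^*_h G$ is a left inverse of $\dbar$ on the first term. This is precisely dual to the argument for Proposition \ref{Pos}, with the vanishing of the degree-\emph{zero} harmonic space replacing the vanishing of the degree-\emph{one} one.

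For the complementary splitting map I would take $\iota$ to be the inclusion of the harmonic forms $\Harm^{0,1}(X,f^*T_Y)$; then $H\circ\iota=\id$ is immediate, since the harmonic projection fixes harmonic forms. To see that $\dbar^*_h G$ and $\iota$ together split the sequence it suffices to check the completeness relation $\dbar\,(\dbar^*_h G) + \iota\,H = \id$ on the middle term. For $u=f_*\vartheta$ with $\vartheta\in\Harm^{0,1}(X,T_X)$ one has $\dbar u = f_*\dbar\vartheta = 0$, so the degree-one decomposition $\id = H + G(\dbar\,\dbar^*_h + \dbar^*_h\dbar)$ collapses to $u = Hu + G\,\dbar\,\dbar^*_h u = Hu + \dbar(\dbar^*_h G\,u)$, using $\dbar u=0$ and that $G$ commutes with $\dbar$ and $\dbar^*_h$; this is exactly the asserted relation.

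The main obstacle is bookkeeping rather than depth. One must keep careful track of which space, $A^{0,0}(X,f^*T_Y)$ or $A^{0,1}(X,f^*T_Y)$, each of $H$, $G$, $\laplacedbar$, $\dbar^*_h$ acts on, since it is the vanishing of the degree-zero harmonic space that drives the retraction, and one must check that every map lands in the space the diagram assigns to it. In particular, the harmonic forms $\Harm^{0,1}(X,f^*T_Y)$ are not literally contained in $f_*(\Harm^{0,1}(X,T_X))$, so the resulting direct-sum decomposition is most honestly read inside the ambient space $A^{0,1}(X,f^*T_Y)$; I would phrase the splitting accordingly, as the assertion that the forward maps $\dbar, H$ and the backward maps $\dbar^*_h G, \iota$ satisfy the three identities $\dbar^*_h G\,\dbar = \id$, $H\iota = \id$, and $\dbar\,\dbar^*_h G + \iota H = \id$ above.
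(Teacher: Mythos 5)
Your proof is correct and rests on exactly the same identity as the paper's one-line proof: the Hodge decomposition $\id = H + \laplacedbar G$ on $A^{0,1}(X,f^*T_Y)$, which on the $\dbar$-closed forms $f_*(\Harm^{0,1}(X,T_X))$ collapses to $u = Hu + \dbar(\dbar^*_h G\,u)$. Your additional verifications --- the retraction identity $\dbar^*_h G\,\dbar = \id$ obtained from the degree-zero decomposition with $H^0(X,f^*T_Y)=0$ (mirroring the proof of Proposition \ref{Pos}), and the observation that $\Harm^{0,1}(X,f^*T_Y)$ is not literally contained in $f_*(\Harm^{0,1}(X,T_X))$, so that the three splitting identities should be read inside the ambient space $A^{0,1}(X,f^*T_Y)$ --- go beyond what the paper writes down, but they only make the same argument precise rather than follow a different route.
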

\begin{proof}
We have the identity $\id = H + \laplacedbar G$, where $\laplacedbar = \dbar \dbar^*_h$ for elements in $A^{0,1}(f^*T_Y)$.
\end{proof} 
\begin{remark}
Proposotion $\ref{Pos}$ asserts that one can assign on the infinitesimal level to a deformation of $f: X \to Y$, i.e. an element of $H_{X/Y}$, an element of $H^0(X,f^*T_Y)$, i.e. a deformation where the complex structure remains unchanged. The second proposition $\ref{Neg}$ means that one can assign to an infinitesimal deformation of $X$ an infinitesimal deformation of $f:X \to Y$.
\end{remark}
\begin{remark}
It is not difficult to generalize the results of this section to the case of general deformations of holomorphic maps which do not fix the target space $Y$. 
\end{remark}

\subsection{Movement of branching points}
We end this section with a rather concrete infinitesimal consideration of the Hurwitz space. For this, 
we consider the universal family $\X \to \PG \times \Hbn$. As we have seen in the introduction, the complex structure on $\Hbn$ is given by the finite unbranched topological covering
$$
br: \Hbn \to \operatorname{Sym}^b \PG \setminus \Delta =: \PG_b \setminus \Delta 
$$ 
This map assigns to a branched covering its set of branching points on $\PG$, which we read as a divisor on $\PG$. We write for short $H=\Hbn$ and consider an arbitrary point $s_0 \in H$ as well as the corresponding covering 
$\beta=\beta_{s_0}: X=X_{s_0} \to \PG$. We write $B=br(s_0)$ for the branching divisor. Then the differential
$$
br_*: T_{s_0}(H) \to T_D(\PG_b)
$$ 
is an isomorphism. There is an intrinsic isomorphism (see \cite[p. 160]{ACGH85})
$$
T_D(\PG_b) \cong H^0(\PG,\Oh_B(B))
$$
The space $H^0(\PG,\Oh_B(B))$ is known as the space of infinitesimal deformations of the effective divisor $B$ on $\PG$, see
\cite[p. 94]{HM98}. We take the geometric point of view and interpret a tangent vector at $s_0 \in H$ as an equivalence class of smooth curves in $H$ through the point $s_0$ under the equivalence of first order approximation. These corresponds via the map $br$ to curves in the space of branching points or differently speaking: The infinitesimal movement of  the $b$ branching points in $\PG_b$ corresponds to an infinitesimal deformation of the covering $\beta: X \to \PG$. But we have seen that these infinitesimal deformations are in turn described by the space
$$
T_{s_0}(H) \cong H^0(X,N_{\beta}) \cong H_{X/Y},
$$
which appears in the tangent sequence
$$
0 \to H^0(X,\beta^*T_{\PG}) \to H_{X/ \PG} \to H^1(X,T_X) \to 0
$$
An element in $H^0(X,N_f)$ is represented by a differentiable vector field on $X$, which is meromorphic in a neighborhood of the ramification points with simple poles there (compare also the presentation in \cite[pp. 126-128]{NR11}). If $z^1,\ldots,z^b$ are local coordinates centered around the branching points $x_1,\ldots,x_b$, this vector field has the local expression
$$
\sum_{\nu =-1}^{\infty}{a_v^jz_j^v}
$$
around the point $x_j$. Let $c_j=a_{-1}^j$ be the residuum of this vector field at the point $x_j$ with respect to the coordinate 
$z_j$. By means of this coordinates we can identify $H^0(X,\Oh_R)$ with $\C^b$. The vector field then represents the value 
$(c_1,c_2,\ldots,c_b)$. The subspace $H^0(X,\beta^*T_{\PG}) \subset H_{X/ \PG}$ stands for the infinitesimal deformations of $\beta: X \to \PG$, where the complex structure of $X$ remains (infinitesimal) unchanged. This means that the complex structure remains infinitesimally unchanged along directions of vectors, which are residuals of holomorphic vector fields with simple poles in the ramification points. The residuum of a meromorphic vector field depends on the chosen coordinates. However, the property $c=0$ and $c\neq0$ respectively does not!  Now we can ask the question which movements of branching points keep the structure on $X$ fixed or not. One result in this direction is the following statement:
\begin{proposition}
If one moves less than $2g-2$ branching points on $\PG$, the complex structure on $X$ changes. 
\end{proposition}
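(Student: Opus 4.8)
The plan is to read off the assertion from the tangent sequence
$$0 \to H^0(X,\beta^*T_{\PG}) \to H_{X/\PG} \to H^1(X,T_X) \to 0.$$
Since $g>1$ we have $H^0(X,T_X)=0$, so from the long exact cohomology sequence of $0 \to T_X \to \beta^*T_{\PG} \to N_\beta \to 0$ the connecting map $\delta: H_{X/\PG} \cong H^0(X,N_\beta) \to H^1(X,T_X)$ has kernel exactly $H^0(X,\beta^*T_{\PG})$. As explained in the text, $\delta$ records the induced infinitesimal deformation of the complex structure of $X$; hence the complex structure is unchanged along a tangent vector $v$ if and only if $v$ lies in the subspace $H^0(X,\beta^*T_{\PG})$. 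It therefore suffices to show that every nonzero element of $H^0(X,\beta^*T_{\PG})$ has at least $2g-2$ nonvanishing residues $c_j$.

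First I would make the residue description of this subspace explicit. Under the identification $H^0(X,N_\beta)\cong H^0(X,\Oh_R)\cong\C^b$, a vector of the subspace is the image $P_*(s)$ of a global section $s\in H^0(X,\beta^*T_{\PG})$. In local coordinates $z$ on $X$ and $w$ on $\PG$ with $\beta(z)=z^2$ near a ramification point $x_j$, one has $d\beta(\dl_z)=2z\,\dl_w$, so writing $s=\phi(z)\,\dl_w$ locally the projection $P$ records $\phi(0)$, and the associated meromorphic vector field $\frac{\phi(z)}{2z}\dl_z$ on $X$ has residue $c_j=\phi(0)/2$ at $x_j$. Thus $c_j\neq 0$ if and only if $s(x_j)\neq 0$, and the number of nonzero residues of $P_*(s)$ equals the number of ramification points at which $s$ does not vanish.

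The decisive step is a degree count. The line bundle $\beta^*T_{\PG}=\beta^*\Oh_{\PG}(2)$ has degree $2n$, so any nonzero holomorphic section $s$ has exactly $2n$ zeros counted with multiplicity. In particular $s$ can vanish at no more than $2n$ of the $b=2n+2g-2$ distinct ramification points $x_1,\ldots,x_b$, hence is nonzero at at least $b-2n=2g-2$ of them. By the previous paragraph, every nonzero $s\in H^0(X,\beta^*T_{\PG})$ therefore has at least $2g-2$ nonzero residues. Contrapositively, a nonzero tangent vector that moves fewer than $2g-2$ branch points, i.e. has fewer than $2g-2$ nonzero residues, cannot lie in $H^0(X,\beta^*T_{\PG})$; hence $\delta(v)\neq 0$ and the complex structure on $X$ changes.

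I expect the only delicate point to be the dictionary in the second paragraph: one must check that the vanishing condition $c_j=0$ is coordinate-independent (the individual residue depends on the choice of $z_j$, but its vanishing does not, as already remarked in the text), and that the identification $H^0(X,N_\beta)\cong\C^b$ is compatible with the inclusion of $H^0(X,\beta^*T_{\PG})$, so that a nonzero residue genuinely corresponds to $s(x_j)\neq 0$. Once this is in place the Riemann-Roch/degree argument is immediate and uses nothing beyond $g>1$ and the Riemann-Hurwitz relation $b=2n+2g-2$.
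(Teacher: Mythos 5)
Your proof is correct and follows essentially the same route as the paper: both reduce the statement to the fact that an infinitesimally unchanged complex structure forces the deformation to lie in the subspace $H^0(X,\beta^*T_{\PG})$ of $H^0(X,N_\beta)$, read off through the residues (poles) at the ramification points, and both finish with a degree count. The only cosmetic difference is that you count the zeros of a nonzero section of the degree-$2n$ bundle $\beta^*T_{\PG}$, while the paper equivalently observes that $T_X(R')$ has negative degree $2-2g+r<0$ when poles are allowed at only $r<2g-2$ ramification points.
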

\begin{proof}
Let
$$
R=\sum_{i=1}^b{p_i}
$$ 
be the ramification divisor of $\beta$ on $X$ and 
$$B=\sum_{i=1}^b{q_i}
$$
the branching divisor on $\PG$, where $\beta(p_i)=q_i$. Since we have a simple covering $\beta: X \to \PG$, there is a one-to-one correspondence between the ramification points on $X$ and the branching points on $\PG$. An infinitesimal movement of the points $q_1,\ldots, q_b$ is an element of $H^0(\PG,\Oh_B(B))$. In local coordinates $z_i$ centered around the points $q_i$, such a section can be written as
$$
\eta = \sum_{i=1}^b \eta_i
$$ 
where
$$
\eta_i = a_i \frac{1}{z_i}
$$
If the $k$-th point is not moved, we have (in all such coordinates) $a_k=0$. By means of the identifications
$$
H^0(\PG,\Oh_B(B)) \cong T_B(\PG_b) \cong T_{s_0}(H) \cong H^0(X,N_{\beta}) \cong H^0(X,\Oh_R) 
$$ 
such a section gives a differentiable vector field on $X$, which is meromorphic in a neighborhood of the ramification points.
In this situation we have a pole (that means the "residuum" is different from zero in all centered coordinates) if and only if the corresponding branching point moves. Now the question is the following: Is there a meromorphic vector field on $X$, which has simple poles in only $r<b$ of the $b$ branching points? For $r< 2g-2$, the answer is negative, since the line bundle $T_X(R')$
has negative degree for a divisor $R'<R$ of degree $r$ and hence has no non-trivial holomorphic sections.     
\end{proof}

\begin{remark}
The statement of the proposition can already be found in the note \cite{Fr12}. There, using methods from Teichm\"uller theory, the statement is  proved by assigning a Beltrami differential to each movement of branching points. This leads to a Riemann-Roch discussion of the space $H^0(X,K_X(-B))$, the Serre dual of the space $H^1(X,T_X(B))$.
\end{remark}

\section{The Weil-Petersson metric}
\subsection{Families of coverings}
We apply the deformation theory developed in the last section to the case of branched coverings of compact Riemann surfaces. We fix a compact Riemann surface $Y$ of arbitrary genus. We use the notation and definitions from \cite{ABS15}.
\begin{definition}
A \emph{holomorphic family of coverings of Riemann surfaces} is a holomorphic map $\Phi=(\lb,f): \X \to Y \times S$, where the maps $\lb_s: X_s \to Y$ are non-constant holomorphic maps of a compact Riemann surfaces $X_s$ to $Y$.
\end{definition}

Now let $(\lb,f): \X \to Y \times S$ be a holomorphic family of coverings with $g(X_s)>1$, i.e. $X_s$ is hyperbolic. We fix a point $s_0 \in S$ together with the fiber $X=X_{s_0}$ and the corresponding map $\lb_0=\lb_{s_0}: X \to Y$. Let $z$ be a local holomorphic coordinate on the fibers and $s=(s^1,\ldots,s^r)$ be local holomorphic coordinates on $S$, which we use as coordinates on $\X$ such that $f(z,s)=s$.  Then every fiber $X_s$ carries a unique hyperbolic metric
$$
\omega_{X_s} = \sqrt{-1}g(z,s) dz \wedge d \zbar
$$
of constant Ricci curvature $-1$, which is $C^{\infty}$ and also depends $C^{\infty}$-differentiable on the parameter $s$. It therefore holds
\begin{equation}
\label{Ricci}
\frac{\dl^2}{\dl z \dl \zbar} \log g(z,s) = g(z,s).
\end{equation}
The K\"ahler forms on the fibers of $f$ yield a hermitian metric $g^{-1}(z,s)$ on the relative canonical bundle $K_{\X/S}$. We denote its curvature form by
$$
\omega_{\X} = \sqrt{-1} \dl \dbar \log g(z,s).
$$
Because of $\ref{Ricci}$ we have
$$
\omega_{\X}|_{X_s}=\omega_{X_s}.
$$
Let
$$
\rho_s: T_sS \to H^1(X_s,T_{X_s})
$$ 
be the Kodaira-Spencer map of the deformation $f: \X \to S$ at $s \in S$. Let $\dl/\dl s=\dl_s$ be a tangent vector in $T_sS$. Harmonic representatives of $\rho(\dl_s)$ with respect to the hyperbolic metrics on $X_s$ are harmonic Beltrami differentials, which we denote by
$\mu_s=\mu_{s \zbar}^z \dl_s d \zbar$. These objects can be obtained by \emph{horizontal lifts}  of $\dl_s$ (see \cite{Sch93}), which are also \emph{canonical lifts} in the sense of Siu (\cite{Siu86}). This lift can be computed as
\begin{eqnarray}
\label{HorLift}
v_s=\dl_s + a_s^z \dl_s \quad \mbox{where} \quad a_s^z=-g^{\zbar z} g_{s \zbar}
\end{eqnarray}
which indeed gives a lift of $\dl_s$ perpendicular to the fibers with respect to $\omega_{\X}$.  
In this notation $g_{s \zbar}$ is the component of $\omega_{\X}$ in the direction of $z$ and $s$. 
The harmonic Beltrami differential is given by
$$
\mu_s = (\dbar v_s)|_{X_s} = \dl_{\zbar}(a_s^z)\dl_z d \zbar.
$$
Now we consider the characterisitic map
$$
\tau_s : T_sS \to H^0(X_s,N_{\beta_s}).
$$
Since we have chosen a K\"ahler metric $\omega_{X_s}$ on $X_s$ and there are no non-trivial holomorphic vector fields on 
$X_s$, we can identify $H^0(X_s,N_{\beta_s})$ with 
$$
H_{X_s/Y} = \{ \chi \in A^{0,0}(\beta_s^*T_Y) \, : \, \dbar \chi \in \beta_{s, *}(\Harm^{0,1}(X,T_X)) \}
$$ 
by the results of the last section. One obtains a representative of $\tau_s(\dl_s)$ in $H_{X_s/Y}$ also by using the horizontal lift  $v_s$: 
$$
u_s := \beta_*{v_s} = \beta_*(\dl_s + a_s^z \dl_z) = \frac{\dl \beta}{\dl s} \frac{\dl}{\dl w} + a_s^z \frac{\dl \beta}{\dl z} \frac{\dl}{\dl w} = (\xi^w_s + a_s^z \zeta^w_z) \frac{\dl}{\dl w}, 
$$
where we introduced the notation 
$$
\xi^w_z = \dl \beta / \dl s, \quad \zeta^w_z = \dl \beta / \dl z.
$$ 
This gives a differentiable vector field $X_s$ with values in 
$\beta_s^*T_Y$ such that
$$
\dbar(u_s) = \mu_s \zeta^w_z \dl_w d \zbar \in \beta_{s, \star}(\Harm^{0,1}(X_s,T_{X_s})). 
$$
By Proposition $\ref{diffbarehochhebung}$ and the proof of Proposition $\ref{HXY}$ we have $\tau_s(\dl_s)=u_s \in H_{X_s/Y}$. We call the elements of $H_{X_s/Y}$ (by abuse of notation) \emph{generalized harmonic representatives} of the characteristic map. We also state this as a result:
\begin{proposition}
The pushforward of horizontal lifts of tangent vectors give generalized harmonic representatives of the characteristic map.
\end{proposition}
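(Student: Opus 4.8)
The plan is to recognize the horizontal lift as a particular admissible differentiable lift and then to chain together the two structural results established above. Since each $\beta_s\colon X_s \to Y$ is a non-constant map of compact Riemann surfaces, it is non-degenerate ($T_{X_s/Y}=0$), so Horikawa's theory and in particular Proposition \ref{diffbarehochhebung} apply without modification. First I would observe that $v_s = \dl_s + a^z_s \dl_z$ is a global differentiable lift of $\dl_s \in T_{s_0}S$ to $\X$; applying Proposition \ref{diffbarehochhebung} with $F=\beta$ and $\chi = v_s$ then tells us that $\tau_{s_0}(\dl_s)$ is represented in $D'_{X/Y}$ by the pair
$$
\left(\beta_*(v_s)\vert_X,\; \dbar(v_s)\vert_X\right) = (u_s,\mu_s).
$$
This reduces the statement to understanding how this representative sits inside $H_{X_s/Y}$ under the isomorphism of Proposition \ref{HXY}.

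The second step is to invoke the explicit form of the isomorphism $D'_{X/Y} \cong H_{X/Y}$. Tracing through the proof of Proposition \ref{HXY}, one passes from a representative $(\xi,\vartheta)$ with $\dbar\xi = \beta_*\vartheta$ to the generalized harmonic representative by replacing $\vartheta$ with the unique harmonic form in its Dolbeault class and correcting $\xi$ by the associated $\beta_*\zeta$. The key point I would emphasize is that here $\vartheta = \mu_s$ is \emph{already} harmonic, since by construction $\mu_s$ is the harmonic Beltrami differential representing $\rho_s(\dl_s)$. Hence the correction term vanishes and $u_s$ is itself the generalized harmonic representative. Equivalently, since $\beta$ is holomorphic and $\dbar$ therefore commutes with $\beta_*$, I would verify the defining membership condition directly:
$$
\dbar(u_s)\vert_{X_s} = \beta_*(\dbar v_s)\vert_{X_s} = \beta_*(\mu_s) = \mu_s\,\z\,\dl_w\,d\zbar \in \beta_{s,*}\!\left(\Harm^{0,1}(X_s,T_{X_s})\right).
$$
This is precisely the condition characterizing $H_{X_s/Y}$, so $u_s \in H_{X_s/Y}$ and $\tau_s(\dl_s)=u_s$, which is the assertion.

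The one genuinely non-formal ingredient, and thus where I expect the real content to lie, is the harmonicity of $\mu_s = (\dbar v_s)\vert_{X_s}$. This is not a consequence of Horikawa's deformation theory but an analytic input about the hyperbolic metric on the fibers: it is exactly the property that distinguishes Schumacher's horizontal (canonical) lift among all differentiable lifts of $\dl_s$, and it is established in \cite{Sch93}. Granting this fact, everything else is the functoriality of $\beta_*$ together with the explicit description of the isomorphism $D'_{X/Y} \cong H_{X/Y}$; no further computation is required.
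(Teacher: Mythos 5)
Your proposal is correct and follows essentially the same route as the paper: the paper likewise obtains the pair $(u_s,\mu_s)=(\beta_*(v_s)\vert_{X_s},(\dbar v_s)\vert_{X_s})$ via Proposition \ref{diffbarehochhebung}, then invokes the isomorphism of Proposition \ref{HXY} together with the harmonicity of $\mu_s$ (Schumacher's horizontal lift, \cite{Sch93}) to conclude $\tau_s(\dl_s)=u_s\in H_{X_s/Y}$. Your identification of the harmonicity of $\mu_s$ as the sole non-formal ingredient, with everything else being functoriality of $\beta_*$ and the explicit isomorphism, matches the paper's reasoning exactly.
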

\begin{remark}
The vector fields $u_s$ have already been constructed in the work \cite{ABS15} without being aware of their deformation theoretic meaning. These ad hoc defined objects were used for defining the Weil-Petersson metric, see below.
\end{remark}

\subsection{Definition of the metric}
We keep the notation from the previous subsection. Using the harmonic representatives of the characteristic map, we are able to introduce a metric on any smooth base $S$ of an effectively parametrized family $(\beta,f): \X \to Y \times S$ of coverings of Riemann surfaces. Let $s=(s^1, \cdots, s^r)$ be again local holomorphic coordinates on $S$, which we use together with a coordinate $z$ on the fiber as coordinates on $\X$ such that $f(z,s)=s$. Moreover, let 
$w$ be a local holomorphic coordinate on $Y$ so that $\beta(z,s)=w.$ The K\"ahler metric on $Y$ of constant Ricci curvature (+1, 0 or -1 depending on $g(Y)$) reads as 
$$\omega_Y=\sqrt{-1} \hw \; dw \wedge d \wbar.
$$
For the pullback of this metric we get
$$
\beta^* \omega_Y = \sqrt{-1}\hw(\beta(z,s))\left( 
\z \zb \, dz \wedge d \zbar + \z \xi^{\wbar}_{\jbar} dz \wedge d s^{\jbar} + \xi^w_i \zb ds^i \wedge d \zbar + \xi^w_i \xi^{\wbar}_{\jbar} ds^i \wedge ds^{\jbar}
\right)
$$ 
Here and in the following we are using Einstein's convention of summation. The explicit expression for  $\omega_{\X}=\sqrt{-1}\dl \dbar \log g(z,s)$ gives
$$
\omega_{\X}=\sqrt{-1}(g_{z \zbar} dz \wedge d \zbar + g_{z\jbar} dz \wedge ds^{\jbar} + g_{i \zbar} ds^i \wedge d \zbar + g_{i \jbar} ds^i \wedge ds^{\jbar}).
$$
For a tangent vector $\dl_i=\dl / \dl s^i, 1 \leq i \leq r$, we set $u_i:= u_{\dl/\dl s^i}$. For any $s \in S$ the space
$H_{X_s/Y} \subset A^{0,0}(X_s,\beta_s^*T_Y)$ carries a natural scalar product. Thus we define 
$$
G^{WP}_{1,i \, \jbar}(s):=\int_{X_s}{u_i \cdot \ovl{u_j} \, g dA} 
= \int_{X_s}{(\xi^w_i + a_i^z \zeta^w_z)(\xi^{\wbar}_{\jbar} + a_{\jbar}^{\zbar} \zeta^{\wbar}_{\jbar}) \, \hw(\beta(z,s)) \, \sqrt{-1} \, g_{z \zbar}(z,s) \, dz \wedge d \zbar}.
$$
Here we were writing $gdA=\sqrt{-1}\gz(z,s)dz \wedge d\zbar$ for the area element with respect to the hyperbolic metric on the fiber $X_s$. 
As we will see soon, already this product gives a hermitian metric on the base $S$ if the family is effectively parametrized as a family of maps. But for obtaining a K\"ahler metric, we need a second term, see \cite{ABS15}. We set
$$
G^{WP}_{0,i \, \jbar}:=\int_{X_s}{\varphi_{i \jbar} \, \beta_s^*\omega_Y} = \int_{X_s}{\varphi_{i \, \jbar}(z,s) \hw(\beta(z,s)) \z(z,s) \zb(z,s) \sqrt{-1} dz \wedge d \zbar}
$$
Here we used the function
$$
\varphi_{i \, \jbar} := \langle v_i, v_j \rangle_{\omega_{\X}} = g_{i \jbar} - g_{i \zbar} g_{z \jbar} g^{\zbar z},
$$
which is the inner product of the horizontal lifts in the direction of $i$ and $j$ with respect to the form $\omega_{\X}$. One should compare the expression for $G^{WP}_{0}$ with the easier expression
$$
\int_{X_s}{\varphi_{i \jbar} \; gdA} = \int_{X_s}{\varphi_{i \jbar}\; \omega_{X_s}}
$$
of the Weil-Petersson metric for the family of compact Riemann surfaces $f: \X \to S$ (see \cite{LSY04}).
We define the \emph{Weil-Petersson inner product} for two tangent vectors $\dl/\dl s^i$ and $\dl/ \dl s^j$ at the point $s$ by
$$
\langle \dl/\dl s^i, \dl/ \dl ^js \rangle_{WP} := G^{WP}_{i \, \jbar}(s) := G^{WP}_{0, i \, \jbar}(s) +  G^{WP}_{1, i \, \jbar}(s).
$$
\subsection{Positivity of the pairing}
\begin{proposition}\cite{ABS15}
The Weil-Petersson product is positive definite, if the family
$$
(\beta,f): \X \to Y \times S
$$
is effectively parametrized. 
\end{proposition}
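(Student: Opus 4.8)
The plan is to write the Weil-Petersson pairing as a sum of a positive definite and a positive semidefinite Hermitian form, and to read off positive definiteness from the first of these. Fix $s \in S$ and a tangent vector $\dl_s = \sum_i c^i \dl_i \in T_sS$, with horizontal lift $v_s = \sum_i c^i v_i$ and $u_s = \beta_*(v_s) = \sum_i c^i u_i$. Evaluating the form on $\dl_s$ gives
$$
G^{WP}(\dl_s,\dl_s) = \int_{X_s} \varphi \, \beta_s^*\omega_Y + \int_{X_s} u_s \cdot \ovl{u_s} \; g \, dA,
$$
where $\varphi = \langle v_s, v_s \rangle_{\omega_{\X}}$. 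I would show that the second integral is strictly positive whenever $\dl_s \neq 0$, and that the first is always nonnegative.

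The decisive step is the second summand. Its integrand equals $u_s \cdot \ovl{u_s}\, g = |u_s^w|^2 \, \hw \, g$ pointwise, which is nonnegative because $\hw > 0$ and $g > 0$; being continuous, it integrates to zero if and only if $u_s \equiv 0$. Here I invoke the identification established in the previous subsection: $u_s = \beta_*(v_s)$ is a generalized harmonic representative of the characteristic class $\tau_s(\dl_s)$ under $H_{X_s/Y} \cong H^0(X_s, N_{\beta_s})$. Since $X_s$ is hyperbolic we have $H^0(X_s, T_{X_s}) = 0$, so this representative is unique and $u_s \equiv 0$ is equivalent to $\tau_s(\dl_s) = 0$. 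Effective parametrization of the family of maps means precisely that the characteristic map $\tau_s$ is injective, so $u_s \equiv 0$ forces $\dl_s = 0$. Thus the second summand alone is already positive definite.

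It remains to check nonnegativity of the first integral. We have $\beta_s^*\omega_Y = \hw\, \z\zb\, dA = \hw\, |\z|^2 \, dA \geq 0$, so it suffices to show $\varphi \geq 0$. This is where Schumacher's identity $(\Box_{\omega_s}+1)\varphi = \|\mu_s\|^2$ enters: the right-hand side is nonnegative and the operator $\Box_{\omega_s}+1$ obeys the minimum principle. Concretely, at an interior minimum $p$ of $\varphi$ one has $\Box_{\omega_s}\varphi(p) \leq 0$, whence $\varphi(p) \geq (\Box_{\omega_s}+1)\varphi(p) = \|\mu_s\|^2(p) \geq 0$, so $\varphi \geq 0$ throughout $X_s$. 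Consequently $\int_{X_s} \varphi\, \beta_s^*\omega_Y \geq 0$.

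Combining the two estimates gives $G^{WP}(\dl_s,\dl_s) \geq \int_{X_s} u_s \cdot \ovl{u_s}\, g\, dA > 0$ for every $\dl_s \neq 0$, which is the asserted positive definiteness. I expect the main obstacle to be the middle paragraph, where one must genuinely use both that $u_s$ represents the characteristic class in $H_{X_s/Y}$ and that effective parametrization is exactly the injectivity of $\tau_s$; by contrast, the nonnegativity of $\varphi$ is a routine maximum-principle consequence of Schumacher's identity recalled earlier.
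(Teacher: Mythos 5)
Your proof is correct and follows essentially the same route as the paper's: both split the pairing into $G_0^{WP}+G_1^{WP}$, deduce $\varphi \geq 0$ from Schumacher's identity $(\Box_{\omega_s}+1)\varphi = \|\mu_s\|^2$, and conclude from the vanishing (resp.\ positivity) of $G_1^{WP}$ that $u_s \equiv 0$ is equivalent to $\tau_s(\dl_s)=0$, which forces $\dl_s=0$ by effectiveness. The only cosmetic differences are that you argue directly rather than contrapositively, and you spell out the minimum principle where the paper cites the strict positivity of the operator $\Box_{\omega_s}+1$ from \cite{Sch12}.
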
 
\begin{proof}
We present a more conceptional proof than in \cite{ABS15}. 
We proof the property at a point $s \in S$ and choose a tangent vector $\dl_s \in T_sS$ such that $G_{s\sbar}^{WP}(s)$ vanishes. First, we consider
$$
G_{0,s\sbar}^{WP}(s)= \int_{X_s}{\varphi_{s\sbar}(z,s)\hw(\beta(z,s))\z(z,s)\zb(z,s)\sqrt{-1}dz\wedge d\zbar}.
$$
From the equality
$$
(\Box_{\omega_s} + 1)\varphi_{s \sbar} = ||\mu_s||^2
$$
and the fact that the operator $\Box_{\omega_s}+1$ is strictly positive (see \cite{Sch12}), it follows that $\varphi_{s\sbar}$ is non-negative. Since also the second term $G_{1,s\sbar}^{WP}(s)$ is non-negative, we have
$$
G_{0,s\sbar}^{WP}(s)=0=G_{1,s\sbar}^{WP}(s).
$$ 
Now it follows from
$$
G_{1,s\sbar}^{WP}(s)=\int_{X_s}{||u_s||^2\,gdA}=0
$$
that $u_s=0$ as an element of $H_{X_s/Y}\subset A^{0,0}(X_s,\beta_s^*T_Y)$. Hence, $\tau_s(\dl_s)=0$ and since $\tau_s$ is injective for a effectively parametrized family, we finally get $\dl_s=0$. 
\end{proof}
\begin{remark}
The vanishing of $u_s$ means for the local expression $\xi^w_s + a_s^z\z=0$, i.e. $a_s^z=-\xi^w_s/\z.$ Since $a_s^z$ is everywhere differentiable and hence does not have any poles, $a_s^z$ must therefore be holomorphic. Then the family of complex structures $f: \X \to S$ is infinitesimal trivial at the point $s$ in the direction of $\dl_s$ and $u_s$ is an element of 
$H^0(X_s,\beta_s^*T_Y)$. But this element is zero, so the family of coverings $\beta: \X \to Y$ is also infinitesimal trivial at $s$ in the direction of $\dl_s$. 
\end{remark}
\subsection{The choice of coordinates}
We consider the case of families of coverings $(\beta,f): \X \to Y \times S$ of compact hyperbolic Riemann surfaces $X_s$ over a one-dimensional base $0\in S \subset \C$. We assume that the horizontal lift
$$
v_s = \dl_s + a_s^z\dl_z
$$
of the coordinate vector field $\dl_s$ is a holomorphic vector field on $\X$. By integrating this vector field (after a shrinking of 
$S$),
we obtain a trivialization
$$
\Phi: X_0 \times S \to \X, \; (\tz,s) \mapsto (z(\tz,s),s)
$$
such that
$$
\frac{\dl z}{\dl s}(\tz,s) = a_s^z(z(\tz,s),s).
$$
We now compute the family of metric tensors in this new trivialising coordinates $\tz,s$:
$$
g(z,s)=g(z(\tz,s),s)=: \tg(\tz,s).
$$
We obtain for the derivatives $\log \tg$ 
$$
\frac{\dl \log \tg}{\dl s} = \frac{\dl \log g}{\dl z} \frac{\dl z}{\dl s} + \frac{\dl \log g}{\dl s}
$$
und further
\begin{eqnarray*}
\tg_{s \ovl{\tz}}=
\frac{\dl^2 \log \tg}{\dl \ovl{\tz} \dl s} &=& \frac{\dl^2 \log g}{\dl \zbar \dl z} \frac{\dl \zbar}{\dl \ovl{\tz}} \frac{\dl z}{\dl s} + \frac{\dl^2 \log g}{\dl \zbar \dl s} \frac{\dl \zbar}{\dl \ovl{\tz}} \\
&=& (g_{z \zbar}a_s^z + g_{s \zbar}) \dl \zbar/\dl \ovl{\tz}\\
&=& 0,
\end{eqnarray*}
since
$$
a_s^z=-g^{\zbar z}g_{s \zbar}.
$$
Hence, we have in the new coordinates $a_s^{\tz}=0$, i.e. $v_s=\dl_s$. Now we compute $u_s$ in this trivialising coordinates. We set
$$
\beta(z,s)=\beta(z(\tz,s),s)=: \tbeta(\tz,s)=:\tw(\tz,s).
$$
Then
\begin{eqnarray*}
\tilde{\xi}^{\tw}_s &=& \frac{\dl \tbeta}{\dl s} = \frac{\dl \beta}{\dl s} + \frac{\dl \beta}{\dl z}\frac{\dl z}{\dl s} \\
&=& \xi^w_s + \zeta^w_z a_s^z
\end{eqnarray*}
and $u_s=\tilde{\xi}^{\tw}_s \dl_{\tw}$. We observe
\begin{proposition}
\label{LokTriv}
A family $(\beta,f): \X \to Y \times S$ is locally trivial as a family of coverings at a point $s_0 \in S$, that means the restriction of the family to a neighborhood of $s_0 \in S' \subset S$ is isomorphic to $(\beta_{s_0} \times \operatorname{pr}_2): X_0 \times S' \to Y \times S'$ if and only if $u_s(z,s)=0$ entirely on $\X|_{S'}$.
\end{proposition}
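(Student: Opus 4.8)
The plan is to establish both implications by passing to the trivialising coordinates constructed in this subsection, in which the horizontal lift reduces to $\dl_s$.

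For the direction ``locally trivial $\Rightarrow u_s=0$'', I would argue in product coordinates. If the restricted family is isomorphic to $(\beta_{s_0}\times\operatorname{pr}_2)\colon X_0\times S'\to Y\times S'$, then along these coordinates every fibre carries one and the same complex structure $X_0$, so by uniqueness of the hyperbolic metric the tensor $g$ is independent of $s$. Consequently $g_{s\zbar}=\dl_s\dl_{\zbar}\log g=0$ and hence $a_s^z=-\ginv\,g_{s\zbar}=0$; moreover $\beta$ does not vary with $s$, so $\xi^w_s=\dl\beta/\dl s=0$. Therefore $u_s=(\xi^w_s+a_s^z\z)\,\dl_w=0$ in these coordinates, and since $u_s=\beta_*(v_s)$ is defined intrinsically through the horizontal lift, its vanishing holds on all of $\X|_{S'}$.

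For the converse ``$u_s=0\Rightarrow$ locally trivial'', the decisive step --- and the point I expect to be the main obstacle --- is to upgrade the pointwise identity $u_s=0$ to \emph{holomorphicity} of the horizontal lift. From $u_s=0$ one reads off $a_s^z=-\xi^w_s/\z$; here both $\xi^w_s$ and $\z$ are holomorphic, so the right-hand side is a priori only meromorphic, with possible poles at the ramification points where $\z$ vanishes. However $a_s^z$ is a globally defined $C^{\infty}$ function with no poles, so the poles must cancel and $a_s^z$ is in fact holomorphic. Thus $v_s=\dl_s+a_s^z\dl_z$ is a holomorphic vector field, which places us exactly in the setting treated above: integrating $v_s$ produces a biholomorphic trivialisation $\Phi\colon X_0\times S'\to\X$ in which $a_s^{\tz}=0$ and the horizontal lift becomes $\dl_s$.

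It then remains to identify the map in these coordinates. Writing $\beta$ in the trivialising coordinates as $\tbeta(\tz,s)=\tw$, one has $u_s=\beta_*(\dl_s)=(\dl\tbeta/\dl s)\,\dl_{\tw}$, and the intrinsic vanishing of $u_s$ forces $\dl\tbeta/\dl s=0$, i.e. $\tbeta(\tz,s)=\beta_{s_0}(\tz)$ is independent of $s$. Hence $\Phi$ realises an isomorphism of the family onto the product $(\beta_{s_0}\times\operatorname{pr}_2)\colon X_0\times S'\to Y\times S'$, which is precisely local triviality as a family of coverings. Beyond the no-poles argument, the remaining care is purely in the bookkeeping: one must keep track that $u_s$, built from the horizontal lift and the hyperbolic metric, is genuinely coordinate independent, so that its vanishing established in one trivialisation transfers to every other.
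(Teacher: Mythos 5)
Your proof is correct and follows essentially the same route as the paper's (largely implicit) argument: the observation that $u_s=0$ forces $a_s^z=-\xi^w_s/\zeta^w_z$ to be holomorphic because the smooth function $a_s^z$ admits no poles, followed by integrating the resulting holomorphic horizontal lift to obtain the trivialising coordinates of this subsection, in which $u_s=(\partial\tilde{\beta}/\partial s)\,\partial_{\tilde{w}}$ so that its vanishing is equivalent to $\tilde{\beta}$ being independent of $s$. The forward direction via product coordinates and the coordinate independence of $u_s=\beta_*(v_s)$ likewise matches the paper's setup.
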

\subsection{Preparations and useful identities}
In this subsection, we recall briefly the calculus of covariant derivatives, which we apply to global $C^{\infty}$-sections of the hermitian bundle $(\beta^*T_Y,\beta^*h)$. Furthermore, we collect some useful formulas for the further computations, which already appear in the computation of the curvature of the Weil-Petersson metric on the Teichmüller space.   

By keeping the notation from the previous subsections, we use the symbol $|$ for ordinary and $;$ for covariant derivatives. We set $\dl_z=\dl/\dl z, \dl_{\zbar}=\dl/\dl \zbar$ and $\dl_k=\dl/\dl s^k, \dl_{\lbar}=\dl/\dl s^{\lbar}$ for coordinate directions $1\leq k,l\leq r$ on the $r$-dimensional base $S$.  Let $u=u^w(z,s)\dl_w$ and $v=v^w(z,s)\dl_w$ be vector fields along the fibers of $\X \to S$, i.e. $u_s:=u(z,s)$ and $v_s=v(z,s)$ are differentiable families of vector fields with values in $\beta_s^*T_Y$. Then 
\begin{eqnarray*}
\dl_z(u,v)&=& \dl_z (u \cdot \vbar) \\ 
&=&  \dl_z(u^w v^{\wbar} \hw)\\
&=& u^w_{|z}v^{\wbar}\hw + u^w v^{\wbar}_{|z}\hw + u^w v^{\wbar} h_{w\wbar|z} \\ 
&=& u^w_{|z}v^{\wbar}\hw + u^w v^{\wbar}_{|z}\hw + u^w v^{\wbar} \Gamma_w \z \hw \\
&=& u^w_{;z} \vbar \hw + u^w \vbar_{,z} \hw \\
&=& \nabla_z(u) \cdot \vbar + u \cdot \nabla_z(\vbar) \\
&=& (\nabla_z(u),v) + (u,\dl_{\zbar}(v)),
\end{eqnarray*}
where we introduced the covariant derivatives
\[
\nabla_z(u) = (u^w_{|z} + \Gamma_w \z u^w)\dl_w
\]
and
\[
\nabla_z(\vbar) = (v^{\wbar}_{|z})\dl_w.
\]
Here we set
$$
\Gamma_w=\Gamma_w(\beta(z,s))=(h(\beta(z,s)))^{-1}(\dl_wh)(\beta(z,s)).
$$
Analogously we have for $1\leq k \leq r$
\[
\dl_k(u,v) = (\nabla_k(u),v) + (u,\dl_{\kbar}(v)),
\]
where
\[
\nabla_k(u) = (u^w_{|k} + \Gamma_w \xi^w_k u^w)\dl_w.
\]

\begin{lemma}
$\nabla_z(\hw) = \nabla_k(\hw) = 0.$
\end{lemma}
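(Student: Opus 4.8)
The plan is to recognize both identities as the metric compatibility of the Chern connection on the pulled-back hermitian line bundle $(\beta^*T_Y,\beta^*h)$, written out in the coordinate $w=\beta(z,s)$. Since $Y$ is a Riemann surface, $T_Y$ is a line bundle, so its Chern connection has a single Christoffel symbol, already recorded above as $\Gamma_w=\hw^{-1}\dl_w\hw=\dl_w\log\hw$ evaluated at $w=\beta(z,s)$. Viewing the metric $\hw$ as a section of $(\beta^*T_Y)^*\otimes\ovl{(\beta^*T_Y)^*}$, its $(1,0)$ covariant derivative $\nabla_z$ is assembled from this connection together with its dual and conjugate; the first task is to confirm that only one Christoffel term survives, so that
$$\nabla_z(\hw)=\dl_z\hw-\Gamma_w\,\z\,\hw.$$

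First I would justify the bookkeeping behind this display: the holomorphic cotangent factor contributes the dual Christoffel $-\Gamma_w\z$, while the antiholomorphic factor $\ovl{(\beta^*T_Y)^*}$ contributes nothing, because the $(1,0)$-part of the conjugate Chern connection carries no Christoffel symbol. The main computation is then the chain rule. Because $\hw$ depends on $z$ holomorphically through $w=\beta(z,s)$ and antiholomorphically through $\wbar$, and since $\beta$ is holomorphic the antiholomorphic argument is annihilated by $\dl_z$, one gets $\dl_z\hw=(\dl_w\hw)\,\z$. Substituting $\Gamma_w=\hw^{-1}\dl_w\hw$ gives $\Gamma_w\,\z\,\hw=(\dl_w\hw)\,\z$, so the two terms in the display cancel and $\nabla_z(\hw)=0$. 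This identity $\dl_z\hw=\Gamma_w\,\z\,\hw$ is precisely the relation used implicitly in the product-rule splitting $\dl_z(u,v)=(\nabla_z u,v)+(u,\dl_{\zbar}v)$ established just above, so the lemma can equivalently be read off from that computation.

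The case $\nabla_k$ is identical after replacing $z$ by $s^k$ and $\z$ by $\xi^w_k$: since $\hw$ again depends on $s^k$ only through the holomorphic argument $w=\beta(z,s)$, the chain rule yields $\dl_k\hw=(\dl_w\hw)\,\xi^w_k$, which cancels against $\Gamma_w\,\xi^w_k\,\hw$. I do not anticipate a genuine obstacle; the only point requiring care is the sign and conjugation convention ensuring that exactly one Christoffel term appears in each of $\nabla_z(\hw)$ and $\nabla_k(\hw)$, since a spurious conjugate-Christoffel contribution would break the cancellation.
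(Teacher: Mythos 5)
Your proof is correct and follows essentially the same route as the paper: the covariant derivative of $\hw$ carries a single dual Christoffel term $-\Gamma_w\z\hw$ (resp.\ $-\Gamma_w\xi^w_k\hw$), and the chain rule $h_{w\wbar|z}=\Gamma_w\z\hw$ (resp.\ $h_{w\wbar|k}=\Gamma_w\xi^w_k\hw$) makes it cancel. The paper's proof is just the compressed version of this computation, so there is nothing to add.
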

\begin{proof}
We compute
\[
h_{w\wbar;z} = h_{w\wbar|z} - \Gamma_w \z \hw = \Gamma_w \z \hw - \Gamma_w \z \hw = 0.
\]
Analogously
\[
h_{w\wbar;k} = h_{w\wbar|k} - \Gamma_w \xi^w_k \hw = \Gamma_w \xi^w_k \hw - \Gamma_w \xi^w_k \hw = 0.
\]
\end{proof}

Furthermore, we need some useful formulas for the computations in the next sections, which already appear in the computation of the curvature of the Weil-Petersson metric on the Teichm\"uller space:

\begin{lemma}
\label{Liu}
The following equations hold (see \cite[proof of Lemma 3.3]{LSY04}):
\begin{eqnarray}
\dl_z a_i  & = & -\Gamma_z a_i - \dl_i \log g \\ 
\dl_{\zbar} a_{\jbar}& = & -\Gamma_{\zbar} a_{\jbar} - \dl_{\jbar} \log g \\
\dl_{\lbar} a_i & = & -A_ia_{\lbar} - g^{-1} \dl_{\zbar} \varphi_{i \lbar}\\
\dl_k a_{\jbar} & = & -A_{\jbar} a_k - g^{-1} \dl_z \varphi_{k \jbar}
\end{eqnarray}
Here we have $\Gamma_z=g^{-1}\dl_zg$.
\end{lemma}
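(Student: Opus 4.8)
The plan is to prove all four identities by direct differentiation of the defining formula $a_i = -g^{-1} g_{i\zbar}$ for the horizontal-lift coefficient (with $g_{i\zbar} = \dl_i\dl_{\zbar}\log g$ the corresponding component of $\omega_{\X}$), the only geometric input being the K\"ahler--Einstein equation \ref{Ricci}. Concretely, \ref{Ricci} is what lets me trade a differentiated metric factor for a derivative of $\log g$: since $g_{z\zbar} = \dl_z\dl_{\zbar}\log g = g$, any occurrence of $\dl_z g$, $\dl_{\zbar}g$ or $\dl_{\lbar}g$ may be rewritten as the corresponding third-order derivative of $\log g$, and conversely. Throughout I use $A_i = \dl_{\zbar}a_i$ (the harmonic Beltrami coefficient appearing in $\mu_s = \dl_{\zbar}(a^z_s)\dl_z\,d\zbar$) and $a_{\lbar} = \ovl{a_l} = -g^{-1}g_{z\lbar}$.

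First I would dispatch the two holomorphic-direction identities. Differentiating $a_i = -g^{-1}g_{i\zbar}$ in the $z$-direction, the product rule gives one term coming from $\dl_z(g^{-1}) = -g^{-2}\dl_z g$, which is exactly $-\Gamma_z a_i$ because $\Gamma_z = g^{-1}\dl_z g$, and one term $-g^{-1}\dl_z g_{i\zbar}$. Commuting partials and invoking \ref{Ricci} turns $\dl_z g_{i\zbar} = \dl_i\dl_z\dl_{\zbar}\log g = \dl_i g$ into $-g^{-1}\dl_i g = -\dl_i\log g$, which is the first identity. The second identity then follows by complex conjugation and relabelling, using that $\log g$ is real, $\ovl{a_i} = a_{\ibar}$, and $\ovl{\Gamma_z} = \Gamma_{\zbar}$.

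The hard part will be the two mixed identities, of which I would prove only the third and obtain the fourth by conjugation. Here I would compute the two sides separately. The left side $\dl_{\lbar}a_i$ expands, again via the product rule and \ref{Ricci}, into a term $g^{-2}(\dl_{\lbar}g)g_{i\zbar}$ and a term $-g^{-1}\dl_{\lbar}g_{i\zbar}$. For the right side I would expand $A_i a_{\lbar} = (\dl_{\zbar}a_i)(-g^{-1}g_{z\lbar})$ and, separately, $g^{-1}\dl_{\zbar}\varphi_{i\lbar}$ using $\varphi_{i\lbar} = g_{i\lbar} - g_{i\zbar}g_{z\lbar}g^{-1}$; the differentiation of the product $g_{i\zbar}g_{z\lbar}g^{-1}$ produces one term cubic in derivatives of $\log g$ (carrying $g^{-3}$) together with several quadratic ones (carrying $g^{-2}$). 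The crux is that in $-A_i a_{\lbar} - g^{-1}\dl_{\zbar}\varphi_{i\lbar}$ the two cubic contributions cancel, one pair of quadratic terms cancels, and the surviving quadratic and linear terms reproduce exactly the two terms of $\dl_{\lbar}a_i$ once one uses commutativity of mixed partials (e.g.\ $\dl_i\dl_{\zbar}\dl_{\lbar}\log g = \dl_i\dl_{\lbar}\dl_{\zbar}\log g$ and $\dl_z\dl_{\lbar}\dl_{\zbar}\log g = \dl_z\dl_{\zbar}\dl_{\lbar}\log g$). The fourth identity then follows from the third by complex conjugation, using $\ovl{A_i} = A_{\ibar}$ and the Hermitian symmetry $\ovl{\varphi_{i\lbar}} = \varphi_{l\ibar}$, with the relabelling $l\to k$, $i\to j$.

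I expect the only genuine obstacle to be the bookkeeping in the third identity: one must track simultaneously the power of $g^{-1}$ (linear, quadratic, or cubic in the inverse metric) and which third derivative of $\log g$ sits in each term, in order to see that the cubic and extraneous quadratic terms conspire to cancel. No input beyond \ref{Ricci} and the symmetry of partial derivatives enters, so the content is purely that these cancellations reproduce $\dl_{\lbar}a_i$; this is the computation recorded in \cite[proof of Lemma 3.3]{LSY04}, to which the statement refers.
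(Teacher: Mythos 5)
Your computation is correct: with $a_i=-g^{-1}g_{i\zbar}$, the Kähler--Einstein identity $g_{z\zbar}=g$, and commutation of partials, identities (1) and (3) follow exactly by the cancellations you describe (the $g^{-3}$ terms and one pair of $g^{-2}$ terms cancel in (3)), and (2), (4) are indeed their conjugates via $\ovl{\varphi_{i\lbar}}=\varphi_{l\ibar}$. This is essentially the same direct verification as the proof the paper points to in \cite[proof of Lemma 3.3]{LSY04} (the paper itself gives no argument beyond that citation), so your approach matches the intended one.
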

Moreover, we have the following results (see \cite{Sch93,Sch12}):
\begin{lemma}
\label{diffInt}
We write $L_{k}$ for the Lie derivative with respect to the vector field $v_k$. Then
$$
\dl_k \int_{X_s}{\eta} = \int_{X_s}{L_k(\eta)} \quad \mbox{and} \quad \dl_{\lbar} \int_{X_s}{\eta} = \int_{X_s}{L_{\lbar}(\eta)}
$$
for any smooth $(1,1)$-form $\eta$ on $\X$.
\end{lemma}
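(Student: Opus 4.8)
The plan is to prove this via Cartan's magic formula together with Stokes' theorem, which is the cleanest way to trade a base derivative against a Lie derivative along the total space. Write $F(s):=\int_{X_s}\eta=\int_{\X/S}\eta$ for the fiber integral, a smooth function on $S$ since $\eta$ is smooth and the fibers are compact. Extending the Lie derivative complex-linearly in its vector-field argument, I would use $L_k=d\circ(v_k\ctr)+(v_k\ctr)\circ d$ to split
\[
\int_{X_s}L_k\eta=\int_{X_s}d(v_k\ctr\eta)+\int_{X_s}(v_k\ctr d\eta).
\]
The first summand vanishes: the inclusion of the fiber commutes with $d$, so $v_k\ctr\eta$ restricts to a $1$-form on $X_s$ whose exterior derivative integrates to zero over a compact surface without boundary by Stokes.

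For the second summand the key point is that $\int_{X_s}(v_k\ctr d\eta)$ depends only on the projection $f_*v_k=\dl_k$, not on the vertical part of the lift. Indeed, if $V'$ is tangent to the fibers, then $(V'\ctr d\eta)|_{X_s}$ evaluated on two fiber-tangent vectors feeds the $3$-form $d\eta$ three vectors lying in the $2$-dimensional space $T_pX_s$; these are linearly dependent, so the contraction vanishes identically on $X_s$. Hence only the base component $\dl_k$ contributes, and by the compatibility of integration along the closed fibers with contraction by projectable fields, combined with the commutation $d\!\int_{\X/S}\eta=\int_{\X/S}d\eta$, one gets
\[
\int_{X_s}(v_k\ctr d\eta)=\langle{\textstyle\int_{\X/S}d\eta},\ \dl_k\rangle=\langle dF,\dl_k\rangle=\dl_k\int_{X_s}\eta.
\]
The antiholomorphic identity is obtained verbatim after replacing the $(1,0)$-lift $v_k$ by the $(0,1)$-lift $v_{\lbar}$ and $\dl_k$ by $\dl_{\lbar}$.

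I expect the only genuine subtlety to be bookkeeping the complex structure, since the $(1,0)$-field $v_k$ possesses no real flow of its own. A fully self-contained alternative that sidesteps the formal complex-linear calculus is the \emph{transport argument}: the real vector fields $v_k+v_{\kbar}$ and $\mi(v_k-v_{\kbar})$ do have honest flows, fiber-preserving because they cover $\dl_{x^k}$ and $\dl_{y^k}$ on $S$; pulling $\eta$ back along these flows and differentiating under the integral sign (legitimate by smoothness and compactness of the fibers) expresses $\dl_{x^k}F$ and $\dl_{y^k}F$ as fiber integrals of the corresponding real Lie derivatives, and recombining through $\dl_k=\tfrac12(\dl_{x^k}-\mi\,\dl_{y^k})$ together with the linearity of $L$ reproduces exactly $\int_{X_s}L_k\eta$. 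Either route reduces the statement to the two standard facts — Stokes on closed fibers and differentiation under the integral sign — so I anticipate no essential obstacle beyond this complex-field bookkeeping.
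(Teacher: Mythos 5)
Your proof is correct. One thing to be aware of: the paper offers no proof of this lemma at all --- it is quoted from \cite{Sch93,Sch12} --- so the comparison is with the standard argument in those references rather than with anything in the text; your Cartan--Stokes route is essentially that standard argument. The three ingredients are all handled correctly: the splitting $L_k = d\circ(v_k\ctr\,\cdot\,)+(v_k\ctr\,\cdot\,)\circ d$ extended complex-linearly to the $(1,0)$-field $v_k$, the vanishing of $\int_{X_s} d\bigl((v_k\ctr\eta)\vert_{X_s}\bigr)$ by Stokes on the closed fibers, and the observation that $(V'\ctr d\eta)\vert_{X_s}=0$ for vertical $V'$ because three vectors in the two-dimensional space $T_pX_s$ are linearly dependent --- which is precisely what makes $\int_{X_s} v\ctr d\eta$ depend only on the projection of the lift $v$, so that the horizontal lift $v_k$ may be used in the pointwise description of the fiber integral of $d\eta$. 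The only caveat worth flagging is that the commutation $d\int_{\X/S}\eta=\int_{\X/S}d\eta$ you invoke is itself a statement of the same nature as the lemma (it is differentiation under the fiber integral in coordinate-free dress); it is a standard, independently proved fact for proper submersions with closed fibers, so there is no logical circularity, but the first route leans on machinery nearly as strong as the conclusion. Your second, transport argument is the genuinely self-contained one and correctly resolves the complex bookkeeping: the real fields $v_k+v_{\kbar}$ and $\mi(v_k-v_{\kbar})$ are projectable, hence their local flows carry fibers to fibers, differentiation under the integral over the fixed compact fiber expresses $\dl_{x^k}F$ and $\dl_{y^k}F$ as fiber integrals of real Lie derivatives, and the recombination $v_k=\tfrac12\bigl((v_k+v_{\kbar})-\mi\cdot\mi(v_k-v_{\kbar})\bigr)$ together with complex linearity of the Lie derivative in the vector-field slot gives the claim, the antiholomorphic identity following by conjugation. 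Either route is acceptable; the transport argument is the one to keep if a proof from first principles is wanted.
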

\begin{lemma}
\label{LieDer}
$L_{k}(\gz\; dz\wedge d\zbar)=0$
\end{lemma}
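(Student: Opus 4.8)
The plan is to compute the Lie derivative directly from Cartan's formula $L_k = d\circ(v_k\ctr\,)+(v_k\ctr\,)\circ d$ and to isolate the component that is relevant for the fiber integral of Lemma \ref{diffInt}, namely the coefficient of $dz\wedge d\zbar$. First I would record the interior products of the horizontal lift $v_k=\dl_k+a_k\dl_z$ (with $a_k=a_k^z=-\ginv g_{k\zbar}$) against the coordinate differentials: since $v_k$ has neither a $\dl_{\zbar}$ nor a $\dl_{s^{\lbar}}$ component, one has $v_k\ctr\, dz=a_k$, $v_k\ctr\, d\zbar=0$, $v_k\ctr\, ds^l=\delta^l_k$ and $v_k\ctr\, ds^{\lbar}=0$.

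Writing $\eta=\gz\, dz\wedge d\zbar$ with $\gz=g$, I would then expand $L_k\eta=(v_k\ctr\, d\eta)+d(v_k\ctr\,\eta)$. Since $d\eta=(\dl_l g\, ds^l+\dl_{\lbar}g\, ds^{\lbar})\wedge dz\wedge d\zbar$ and $v_k\ctr\,\eta=g\,a_k\, d\zbar$, a short bookkeeping with the interior products above yields
\[
L_k\eta=\big(\dl_k g+\dl_z(g\,a_k)\big)\, dz\wedge d\zbar+g(\dl_l a_k)\, ds^l\wedge d\zbar+g(\dl_{\lbar}a_k)\, ds^{\lbar}\wedge d\zbar.
\]
The last two terms carry a base differential and pull back to zero on each fiber $X_s$, so they are irrelevant for the fiber-integral formula of Lemma \ref{diffInt}; the substance of the assertion is the vanishing of the fiber coefficient $\dl_k g+\dl_z(g\,a_k)$.

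To finish I would invoke the first identity of Lemma \ref{Liu}, $\dl_z a_k=-\Gamma_z a_k-\dl_k\log g$ with $\Gamma_z=g^{-1}\dl_z g$. Expanding $\dl_z(g\,a_k)=(\dl_z g)\,a_k+g\,\dl_z a_k$ and substituting gives $g\,\dl_z a_k=-(\dl_z g)\,a_k-\dl_k g$, so the fiber coefficient collapses to $\dl_k g+(\dl_z g)a_k-(\dl_z g)a_k-\dl_k g=0$. The only place where the hyperbolic geometry enters is that this identity is itself a direct consequence of the constant Ricci curvature equation \eqref{Ricci}: differentiating $g_{k\zbar}=\dl_k\dl_{\zbar}\log g$ in $z$ and using $\dl_z\dl_{\zbar}\log g=g$ gives $\dl_z g_{k\zbar}=\dl_k g$, which is exactly what makes the coefficient cancel.

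I expect the main obstacle to be organizational rather than conceptual: keeping the signs and index bookkeeping in the interior products straight, and recognizing that the surviving fiber term is annihilated precisely by the K\"ahler--Einstein relation \eqref{Ricci} in the repackaged form of Lemma \ref{Liu}. The $ds^l$ and $ds^{\lbar}$ contributions require no separate treatment, since every application in the sequel uses $L_k\eta$ only under a fiber integral.
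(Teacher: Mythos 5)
Your proof is correct. Note, however, that the paper does not actually prove Lemma \ref{LieDer} at all --- it is quoted from Schumacher's papers \cite{Sch93,Sch12} --- so the only in-paper point of comparison is the proof of the later analogous lemma $L_{\tilde{v}_k}(\hw\z\zb\,dz\wedge d\zbar)=0$, which proceeds by writing down the $(z,\zbar)$-component of the Lie derivative directly, in the form $\eta_{z\zbar|k}+b_k^z\,\eta_{z\zbar|z}+b_{k|z}^z\,\eta_{z\zbar}$, rather than through Cartan's formula. Your route is equivalent but carries extra bookkeeping: you compute the full Lie derivative on $\X$ and then argue the $ds^l$ and $ds^{\lbar}$ components away. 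That step is legitimate, and in fact unavoidable under your reading, since $\gz\,dz\wedge d\zbar$ is only well defined as a relative $(1,1)$-form (modulo base differentials), so fiberwise vanishing is the only invariant content of the statement; the paper's own analogous proof likewise computes just the $(z,\zbar)$-coefficient. The heart of the matter is identical in both treatments: the cancellation $\dl_z(g\,a_k^z)=-\dl_k g$, i.e.\ the first identity of Lemma \ref{Liu}, which is the K\"ahler--Einstein equation \eqref{Ricci} in disguise. Your closing observation actually yields the shortest possible proof and makes the interior-product bookkeeping unnecessary: since $g^{\zbar z}=1/\gz$, one has $g\,a_k^z=-g_{k\zbar}=-\dl_k\dl_{\zbar}\log g$, whence the fiber coefficient is
$$
\dl_k g+\dl_z\bigl(g\,a_k^z\bigr)=\dl_k g-\dl_k\bigl(\dl_z\dl_{\zbar}\log g\bigr)=\dl_k g-\dl_k g=0
$$
directly by \eqref{Ricci}.
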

\begin{lemma}
$\varphi_{i\jbar} = g_{i\jbar} - g a_i a_{\jbar}$
\end{lemma}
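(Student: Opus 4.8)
The plan is to establish the identity by a direct algebraic substitution, since both sides are pointwise expressions in the components of $\omega_{\X}$ with no differentiation or integration involved. First I would recall the two defining formulas already introduced above: the inner product of the horizontal lifts,
$$
\varphi_{i\jbar} = \langle v_i, v_j\rangle_{\omega_{\X}} = g_{i\jbar} - g_{i\zbar}\,g_{z\jbar}\,\ginv,
$$
together with the coefficient of the horizontal lift $v_i = \dl_i + a_i\dl_z$, namely $a_i = -\ginv\,g_{i\zbar}$. Here $a_i$ is shorthand for $a_i^z$ and $g = \gz$ is the scalar metric density on the fibre.

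Next I would pin down the barred quantity $a_{\jbar}$, which by convention is the complex conjugate $\ovl{a_j}$. Conjugating $a_j = -\ginv\,g_{j\zbar}$ and using that the inverse metric $\ginv = g^{-1}$ is a real positive scalar in complex dimension one, the conjugation acts only on the lower indices and turns $g_{j\zbar}$ into $g_{z\jbar}$; thus $a_{\jbar} = -\ginv\,g_{z\jbar}$.

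With these two expressions in hand the verification is immediate. I would form the product
$$
g\,a_i\,a_{\jbar} = g\,(-\ginv\,g_{i\zbar})(-\ginv\,g_{z\jbar}) = g\,\ginv\,\ginv\,g_{i\zbar}\,g_{z\jbar} = \ginv\,g_{i\zbar}\,g_{z\jbar},
$$
where the final simplification uses $g\,\ginv = 1$. Subtracting this from $g_{i\jbar}$ reproduces exactly the defining expression $\varphi_{i\jbar} = g_{i\jbar} - g_{i\zbar}\,g_{z\jbar}\,\ginv$, which finishes the proof.

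I do not expect any genuine obstacle here. The only point requiring care is bookkeeping: one must apply the conjugation convention correctly to the barred index $\jbar$, and one must remember that $\ginv$ is a scalar in the one-dimensional fibre situation, so that $g\,\ginv\,\ginv$ collapses to $\ginv$ rather than producing a contracted tensor. Once these conventions are fixed, the identity is a one-line substitution.
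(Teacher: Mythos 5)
Your proof is correct, and it is exactly the intended argument: the paper states this lemma without proof (as an immediate consequence of the definitions $\varphi_{i\jbar} = g_{i\jbar} - g_{i\zbar}g_{z\jbar}\ginv$ and $a_i^z = -\ginv g_{i\zbar}$, following \cite{Sch93,Sch12}), and your direct substitution, including the careful points that $\log g$ is real so conjugation sends $g_{j\zbar}$ to $g_{z\jbar}$, and that $g\,\ginv = 1$ in fibre dimension one, is precisely the verification the paper leaves to the reader.
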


\begin{lemma}
\label{ellipGl}
$(\Box + 1)\varphi_{i\jbar} = A_i\cdot A_{\jbar},$
\end{lemma}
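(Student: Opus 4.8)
The plan is to observe first that the two quantities in the statement are purely intrinsic to the family of complex structures $f: \X \to S$ with its fiberwise hyperbolic metrics, and involve neither the covering map $\beta$ nor the target metric $\hw$: both $\varphi_{i\jbar} = g_{i\jbar} - g\,a_i a_{\jbar}$ (the preceding lemma) and the harmonic Beltrami differentials $A_i = \dl_{\zbar} a_i$ entering $A_i\cdot A_{\jbar}$ are built solely from the relative metric $g(z,s)$ and the horizontal-lift coefficients $a_i = a_i^z$. Consequently Lemma \ref{ellipGl} is exactly the classical Weil-Petersson identity for a family of Riemann surfaces, and I would deduce it from the material already assembled rather than re-deriving any curvature apparatus.

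For the direct computation I would start from $\varphi_{i\jbar} = g_{i\jbar} - g a_i a_{\jbar}$ and compute $\dl_z\dl_{\zbar}\varphi_{i\jbar}$. For the first summand, commuting the (smooth, hence commuting) partial derivatives and invoking the K\"ahler--Einstein equation \ref{Ricci} in the form $\dl_z \dl_{\zbar}\log g = g$ gives the clean identity $\dl_z\dl_{\zbar}g_{i\jbar} = \dl_i\dl_{\jbar}g$. For the second summand I would apply the Leibniz rule and repeatedly substitute the derivative identities of Lemma \ref{Liu}, namely $\dl_z a_i = -\Gamma_z a_i - \dl_i\log g$ and $\dl_{\zbar}a_{\jbar} = -\Gamma_{\zbar}a_{\jbar} - \dl_{\jbar}\log g$, together with $A_i = \dl_{\zbar} a_i$ and $A_{\jbar} = \dl_z a_{\jbar}$. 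The systematic cancellation---the Christoffel terms $\Gamma_z = g^{-1}\dl_z g$ pairing off against the derivatives of $\log g$, and the $\dl_i\dl_{\jbar}g$ contribution being matched by the $g_{i\jbar}$ term that the $+1$ on the left produces---is what forces the identity; after dividing by $g$ to form the Laplacian $\Box = -g^{-1}\dl_z\dl_{\zbar}$ and adding $\varphi_{i\jbar}$, everything collapses to the pointwise inner product $A_i\cdot A_{\jbar}$ of the harmonic Beltrami differentials.

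A slicker alternative, which I would at least record, is to polarize the diagonal identity $(\Box_{\omega_s}+1)\varphi_{s\sbar} = \|\mu_s\|^2$ already in hand (from \cite{Sch93} and used in the positivity proof). For a direction $v = \sum c^i \dl_i$ the horizontal lift depends linearly on $v$, so $a_v = \sum c^i a_i$, the Beltrami differential $\mu_v = \dl_{\zbar}a_v = \sum c^i A_i$, and the Hermitian forms satisfy $\varphi_{v\bar v} = \sum c^i\bar c^j \varphi_{i\jbar}$ and $\|\mu_v\|^2 = \sum c^i\bar c^j (A_i\cdot A_{\jbar})$. Since $\Box + 1$ is complex-linear and fixes the constants $c^i$, applying it to the diagonal identity for every $v$ and comparing coefficients yields the mixed identity at once. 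The hard part in either route is essentially bookkeeping: in the direct computation, tracking all the Christoffel and curvature contributions so that the cancellations are transparent; in the polarization, making sure that the sesquilinearity of both sides in the index pair is rigorously justified from the linearity of the horizontal lift in the tangent direction.
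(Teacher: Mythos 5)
The paper offers no proof of Lemma \ref{ellipGl} to compare against: it is quoted, together with the neighbouring lemmas, from the references \cite{Sch93,Sch12} (``Moreover, we have the following results\dots''), so any comparison is with Schumacher's work rather than with an internal argument. Both of your routes are correct and use only material already assembled in the paper. Your direct computation is essentially the computation from those references specialized to curves, and the cancellation you describe does occur: writing $\varphi_{i\jbar}=g_{i\jbar}-ga_ia_{\jbar}$, using the identities of Lemma \ref{Liu}, the K\"ahler--Einstein equation in the forms $\dl_z\dl_{\zbar}\log g=g$ and $\dl_{\zbar}\Gamma_z=g$, and the consequences $g_{i\zbar}=-ga_i$, $g_{z\jbar}=-ga_{\jbar}$, $\dl_zA_i=-\Gamma_zA_i$, one obtains
$$
\dl_z\dl_{\zbar}\varphi_{i\jbar}=\dl_i\dl_{\jbar}g-g\,A_i\cdot A_{\jbar}-g\,\dl_i\log g\,\dl_{\jbar}\log g-g^2a_ia_{\jbar},
$$
and after multiplying by $-g^{-1}$ and adding $\varphi_{i\jbar}$ the terms without $A$'s cancel, because $g_{i\jbar}=g^{-1}\dl_i\dl_{\jbar}g-\dl_i\log g\,\dl_{\jbar}\log g$, leaving exactly $A_i\cdot A_{\jbar}$. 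Your polarization argument is a genuinely different and shorter route: since $a_v^z=-g^{\zbar z}g_{v\zbar}$ is complex-linear in the tangent vector $v$, both $\varphi_{v\bar v}$ and the pointwise norm $\|\mu_v\|^2$ are sesquilinear in $v$, so the diagonal identity $(\Box_{\omega_s}+1)\varphi_{s\sbar}=\|\mu_s\|^2$ quoted in the introduction polarizes immediately to the mixed-index statement. The trade-off is as you indicate: polarization costs nothing but treats the diagonal case as a black box from \cite{Sch93}, whereas the direct computation makes this point of the paper self-contained; either one would be a legitimate replacement for the paper's bare citation.
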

The form $A_{i\zbar}^z(z,s)\dl_zd\zbar=\dbar v_i|_{X_s}$ is the harmonic representative of the Kodaira-Spencer class
$\rho(\dl_i)$. 

The fact that the operator $(\Box + 1)$ is invertible gives 

\begin{corollary}
\label{InfTriv}
A deformation $f: \X \to S$ is infinitesimal trivial at a point $s_0 \in S$ in the direction of $1\leq i \leq r$ or $1 \leq j \leq r$ if and only if $\varphi_{i\jbar}(z,s_0)=0$ for all $z \in X_{s_0}$.
\end{corollary}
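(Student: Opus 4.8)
The plan is to derive the whole statement from the elliptic identity of Lemma~\ref{ellipGl}, $(\Box+1)\varphi_{i\jbar}=A_i\cdot A_{\jbar}$, combined with the invertibility of $\Box+1$ and the explicit shape of harmonic Beltrami differentials on a hyperbolic surface. The first thing I would record is the meaning of infinitesimal triviality: the family $f\colon\X\to S$ is infinitesimally trivial at $s_0$ in the direction $\dl_i$ precisely when the Kodaira--Spencer class $\rho(\dl_i)$ vanishes. Since $A_i=A_{i\zbar}^z\,\dl_z\,d\zbar=(\dbar v_i)|_{X_{s_0}}$ is its harmonic representative and harmonic representatives are unique, this is equivalent to $A_i\equiv 0$ on $X_{s_0}$, and similarly for the index $j$.

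For the implication from triviality to the vanishing of $\varphi_{i\jbar}$ I would argue directly: if the family is trivial in the direction $i$ or in the direction $j$, then $A_i\equiv 0$ or $A_{\jbar}\equiv 0$, so in either case the pointwise pairing $A_i\cdot A_{\jbar}$ vanishes identically. By Lemma~\ref{ellipGl} this reads $(\Box+1)\varphi_{i\jbar}=0$, and since $\Box+1$ is strictly positive, hence injective, we conclude $\varphi_{i\jbar}\equiv 0$.

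The converse is where the real content lies. Assuming $\varphi_{i\jbar}\equiv 0$ and applying $\Box+1$, Lemma~\ref{ellipGl} immediately gives $A_i\cdot A_{\jbar}\equiv 0$ on $X_{s_0}$. The hard part is to pass from the identical vanishing of this \emph{mixed} product to the vanishing of one of the two factors; here I would use that, with respect to the hyperbolic metric $g$, every harmonic Beltrami differential has the form $\overline{\psi}/g$ with $\psi=\psi(z)\,dz^2$ a holomorphic quadratic differential. Writing $\psi_i,\psi_j$ for the holomorphic quadratic differentials attached to $A_i,A_j$, the pairing becomes $A_i\cdot A_{\jbar}=g^{-2}\,\overline{\psi_i}\,\psi_j$, so that $\overline{\psi_i}\,\psi_j\equiv 0$. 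By the identity theorem a nonzero holomorphic quadratic differential has only isolated zeros; if neither $\psi_i$ nor $\psi_j$ vanished identically, the product $\overline{\psi_i}\,\psi_j$ would be nonzero on the dense complement of a finite set, a contradiction. Hence $\psi_i\equiv 0$ or $\psi_j\equiv 0$, i.e. $A_i\equiv 0$ or $A_j\equiv 0$, which is exactly triviality in the direction $i$ or $j$. This product argument is precisely where the ``or'' in the statement originates; the diagonal case $i=j$ is contained in it and is even easier, since there $A_i\cdot A_{\ibar}=\|A_i\|^2\ge 0$ and no appeal to holomorphicity is needed.
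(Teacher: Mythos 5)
Your proof is correct and takes essentially the same route as the paper, which obtains the corollary directly from Lemma~\ref{ellipGl} combined with the invertibility (strict positivity) of the operator $\Box+1$. If anything, your write-up is more complete than the paper's one-line justification: the passage from $A_i\cdot A_{\jbar}\equiv 0$ to the vanishing of $A_i$ or $A_j$, which you handle correctly via the representation of harmonic Beltrami differentials as $\overline{\psi}/g$ for holomorphic quadratic differentials $\psi$ and the isolated-zeros argument, is left implicit in the paper.
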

\begin{remark}
\label{BemHolLifts}
For simplicity we consider a one-dimensional base $S$ with a local coordinate $s$. It follows from equation $(4.5)$ of Lemma 
$\ref{Liu}$ and the preceding corollary: If $f: \X \to S$ is infinitesimal trivial at $s_0 \in S$ , then $\dl_{\zbar}a_s = 0 = \dl_{\ovl{s}}a_s$. Thus, the horizontal lift
$
v_s = \dl_s + a_s^z \dl_z
$
is holomorphic with respect to $z$ and $s$ if the family $f:\X \to S$ is infinitesimal trivial entirely on $S$.  By integrating this holomorphic horizontal lift, we obtain a local trivialization of the family.
\end{remark}
The vector field $v_k$ is a horizontal lift of $\dl_k$ with respect to the form $\omega_{\X}$. Now we ask for a horizontal lift with respect to $\beta^*\omega_Y$. Since this form has zeros, we obtain a vector field with poles:
\begin{proposition}
The horizontal lift $\tilde{v}_k$ of $\dl_k$ with respect to the form $\beta^*\omega_Y$ is given by
$$
\tilde{v}_k=\dl_k - (\xi^w_k/\z)\dl_z=\dl_k - \xi^w_k\dl_w.
$$
\end{proposition}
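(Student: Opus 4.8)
The plan is to mimic the derivation of the horizontal lift $v_k = \dl_k + a_k^z\dl_z$ with respect to $\omega_{\X}$, replacing $\omega_{\X}$ throughout by the pulled-back form $\beta^*\omega_Y$. By definition the horizontal lift of $\dl_k$ is the unique lift $\tilde v_k = \dl_k + b^z\dl_z$ whose vertical coefficient $b^z$ is fixed by the requirement that $\tilde v_k$ be perpendicular to the fibers with respect to the given form, i.e. $\<\tilde v_k,\dl_z\>_{\beta^*\omega_Y}=0$. This is exactly the condition that singled out $a_k^z = -g^{\zbar z}g_{k\zbar}$ in the case of $\omega_{\X}$, so the only new input needed is the list of coefficients of $\beta^*\omega_Y$.

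First I would read those coefficients off from the explicit expansion of $\beta^*\omega_Y$ recalled above: the coefficient of $dz\wedge d\zbar$ is $\hw\z\zb$ and the coefficient of $ds^k\wedge d\zbar$ is $\hw\xi^w_k\zb$. The orthogonality relation $\<\tilde v_k,\dl_z\>_{\beta^*\omega_Y}=0$ then reduces to the single linear equation
$$
\hw\,\xi^w_k\,\zb + b^z\,\hw\,\z\,\zb = 0.
$$
Away from the ramification locus the factor $\hw\zb$ is nonzero, so dividing through by it gives $\xi^w_k + b^z\z = 0$, hence $b^z = -\xi^w_k/\z$ and $\tilde v_k = \dl_k - (\xi^w_k/\z)\dl_z$, which is the first asserted formula.

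For the second equality I would simply compute the pushforward $\beta_*(\tilde v_k) = \xi^w_k\dl_w - (\xi^w_k/\z)\z\,\dl_w = 0$; replacing the vertical term $-(\xi^w_k/\z)\dl_z$ by its $\beta_*$-image $-\xi^w_k\dl_w$ then yields the stated form $\dl_k - \xi^w_k\dl_w$. The vanishing of $\beta_*(\tilde v_k)$ also supplies the conceptual reason the lift is horizontal, since $\<V,W\>_{\beta^*\omega_Y}=\<\beta_*V,\beta_*W\>_{\omega_Y}$ forces $\tilde v_k\ctr\beta^*\omega_Y=0$ as soon as $\beta_*\tilde v_k=0$. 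I expect the only real subtlety to be the degeneracy of $\beta^*\omega_Y$ along the ramification divisor, where $\z=0$: there the factor $\hw\zb$ we divided by vanishes and $b^z=-\xi^w_k/\z$ acquires simple poles, so $\tilde v_k$ is genuinely only a meromorphic vector field — precisely the phenomenon announced just before the statement.
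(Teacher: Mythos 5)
Your proposal is correct and takes essentially the same route as the paper: the ansatz $\tilde{v}_k=\dl_k+b_k^z\dl_z$ combined with the orthogonality condition $\< \dl_k + b_k^z\dl_z,\dl_z \>_{\beta^*\omega_Y}=0$, solved by reading off the coefficients of $\beta^*\omega_Y$ to get $b_k^z=-\xi^w_k/\z$. The paper states this in one line without spelling out the linear equation; your additional remarks (the pushforward interpretation of the second equality, the vanishing of $\beta_*\tilde{v}_k$, and the poles of $\tilde{v}_k$ along the ramification locus) are correct and consistent with the surrounding discussion.
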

\begin{proof}
The ansatz
$$
\tilde{v}_k=\dl_k+b_k^z\dl_z
$$ and the condition 
$$
\langle \dl_k + b_k^z\dl_z,\dl_z \rangle_{\beta^*\omega_Y}=0
$$
lead to $b_k^z=-\xi^w_k/\z$.
\end{proof}
Analougsly to \ref{LieDer} we get
\begin{lemma}
$L_{\tilde{v}_k}(\hw \z\zb \; dz \wedge d\zbar)=0$.
\end{lemma}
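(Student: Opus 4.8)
The plan is to imitate the proof of Lemma \ref{LieDer} and reduce everything to a single cancellation in the coefficient of $dz\wedge d\zbar$, which is the only part that survives restriction to (equivalently, integration over) the fibre. Write $F := \hw\,\z\,\zb$ for the coefficient function of the form in question, and recall that the $\beta^*\omega_Y$-horizontal lift is $\tilde{v}_k = \dl_k + b_k^z\,\dl_z$ with $b_k^z = -\xi^w_k/\z$. Since the Lie derivative obeys the Leibniz rule and $L_{\tilde{v}_k}(dz) = d\bigl(dz(\tilde{v}_k)\bigr) = d(b_k^z)$ while $L_{\tilde{v}_k}(d\zbar) = d\bigl(d\zbar(\tilde{v}_k)\bigr) = 0$, I would expand
\[
L_{\tilde{v}_k}(F\,dz\wedge d\zbar) = (\tilde{v}_k F)\,dz\wedge d\zbar + F\,d(b_k^z)\wedge d\zbar .
\]
The terms of $d(b_k^z)\wedge d\zbar$ containing $ds^i$ or $ds^{\lbar}$ drop out upon restriction to the fibre, exactly as the analogous terms do in Lemma \ref{LieDer}, so the whole statement comes down to showing that the coefficient of $dz\wedge d\zbar$ vanishes, i.e.
\[
\tilde{v}_k F + F\,\dl_z b_k^z = \dl_k F + b_k^z\,\dl_z F + F\,\dl_z b_k^z = 0 .
\]

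The key inputs for this identity are the holomorphicity of $\beta$ and the chain rule for $\hw = h_{w\wbar}(\beta(z,s))$. Because $\beta$ is holomorphic in $(z,s)$, the conjugate $\zb = \overline{\dl_z\beta}$ is antiholomorphic, so $\dl_z\zb = \dl_k\zb = 0$; moreover $\dl_k\z = \dl_k\dl_z\beta = \dl_z\xi^w_k$ by commuting partials. The chain rule together with the definition $\Gamma_w = h^{-1}\dl_w h$ gives $\dl_z\hw = \Gamma_w\,\z\,\hw$ and $\dl_k\hw = \Gamma_w\,\xi^w_k\,\hw$. Substituting these into $\dl_k F$, $b_k^z\,\dl_z F$ and $F\,\dl_z b_k^z$ (using $\dl_z b_k^z = -(\dl_z\xi^w_k)/\z + \xi^w_k(\dl_z\z)/\z^2$), I expect the two $\Gamma_w$-terms, the two terms carrying $\dl_z\xi^w_k$, and the two terms carrying $\dl_z\z$ to cancel pairwise, leaving $0$. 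This bookkeeping is the main obstacle: it is routine but slightly fiddly, and the only real danger lies in sign and index errors rather than in any genuine analytic difficulty.

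Finally, I would record the conceptual reason behind the cancellation, which also serves as an independent check. By the defining relation $\xi^w_k + b_k^z\,\z = 0$ one has $\beta_*(\tilde{v}_k) = (\xi^w_k + b_k^z\,\z)\,\dl_w = 0$, so $\tilde{v}_k$ is tangent to the fibres of $\beta$. Consequently $\iota_{\tilde{v}_k}\beta^*\omega_Y = \beta^*\bigl(\iota_{\beta_*\tilde{v}_k}\omega_Y\bigr) = 0$, and since $\beta^*\omega_Y$ is closed, Cartan's formula gives $L_{\tilde{v}_k}\beta^*\omega_Y = 0$. As the terms of $\beta^*\omega_Y$ containing $ds^i$ or $ds^{\lbar}$ contribute nothing to the $dz\wedge d\zbar$-component of their Lie derivative, this forces the coefficient computed above to vanish, confirming the lemma.
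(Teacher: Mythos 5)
Your proof is correct, and it in fact contains two arguments. The first one---reducing the statement to the vanishing of the $dz\wedge d\zbar$-coefficient $\dl_k(\hw\z\zb)+b_k^z\,\dl_z(\hw\z\zb)+\hw\z\zb\,\dl_z b_k^z$ and cancelling terms via $\dl_k\hw=\Gamma_w\xi^w_k\hw$, $\dl_z\hw=\Gamma_w\z\hw$, $\dl_k\z=\dl_z\xi^w_k$ and $\dl_k\zb=\dl_z\zb=0$---is exactly the paper's proof; the three pairwise cancellations you predict (the $\Gamma_w$-terms, the $\dl_z\xi^w_k$-terms, the $\dl_z\z$-terms) are precisely what occur, so even though you leave the expansion as ``routine but slightly fiddly,'' the sketch is accurate and complete in structure. (As a side remark, the paper's own displayed computation has a sign typo in its $\xi^w_{k|z}$-term, but the intended cancellation is the one you describe.) Your second argument is genuinely different from anything in the paper and is arguably the better one: from $\xi^w_k+b_k^z\z=0$ you get $\beta_*\tilde v_k=0$, hence $\iota_{\tilde v_k}\beta^*\omega_Y=\beta^*\bigl(\iota_{\beta_*\tilde v_k}\omega_Y\bigr)=0$; since $\omega_Y$ is a form of top degree on the curve $Y$, the pullback $\beta^*\omega_Y$ is closed, so Cartan's formula gives $L_{\tilde v_k}\beta^*\omega_Y=0$; and since $L_{\tilde v_k}\,ds^i=0=L_{\tilde v_k}\,ds^{\jbar}$, the terms of $\beta^*\omega_Y$ involving $ds^i$ or $ds^{\jbar}$ contribute nothing to the $dz\wedge d\zbar$-component, which yields the lemma. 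This route eliminates the coordinate bookkeeping (the only place where errors could creep in), explains conceptually why the cancellation is forced---the lift is $\beta$-vertical and the pulled-back form is closed---and generalizes immediately to higher-dimensional or other coupled situations. One caveat, shared equally with the paper's proof: $\tilde v_k$ has poles where $\z=0$, so both arguments establish the identity away from the ramification points, which is the locus where $\tilde v_k$ is defined at all.
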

\begin{proof}
We write $\hz=\hw\z\zb$ und $|$ for an ordinary derivative. Then
\begin{eqnarray*}
L_{\tilde{v}_k}(\hz)_{z\zbar}&=&[\dl_k-(\xi^w_k/\z),\hz]_{z\zbar}\\
&=& h_{z\zbar|k} -(\xi^w_k/\z)h_{z\zbar|z} - (\xi^w_k/\z)_{|z}\hz \\
&=& \xi^w_k\hw\Gamma_w\z\zb + \hw\zeta^w_{z|k}\zb\\
&-&  \xi^w_k\hw\Gamma_w\z\zb - (\xi^w_k/\z)\hw\zeta^w_{z|z}\zb\\
&+&  \xi^w_{k|z}\hw\zb + (\xi^w_k/\z)\hw\zb\zeta^w_{z|z}\\
&=&0.
\end{eqnarray*}
\end{proof}
\begin{remark}
The generalized harmonic representative $u_k$ is the difference of the classical horizontal lift $v_s$ and the horizontal lift 
$\tilde{v}_k$.
\end{remark}

\subsection{K\"ahler property}
Very often, the K\"ahler property of the Weil-Petersson metric follows from a fiber integral formula, because the exterior differential commutes with the fiber integral. In this subsection, we prove the K\"ahler symmetry by a direct computation, which gives us the insight that the single expression $G_1^{WP}$ alone does not yield a K\"ahler metric on the Hurwitz space. 

The K\"ahler property means that $d\omega^{WP}=0$. This is equivalent to the K\"ahler symmetry
$$
\dl_kG_{i\jbar}^{WP}(s) = \dl_iG_{k\jbar}^{WP}(s)
$$
for all $1\leq i,j,k\leq r=\dim S$ and $s \in S$. We compute by using the Lie derivative $L_k$ with respect to the vector field $v_k$ and Lemma \ref{diffInt}:
\begin{eqnarray*}
		\partial_k \G_{i\ovl{j}}(s)
&=& \partial_k G^{WP}_{0,i\ovl{j}}(s) + \partial_k G^{WP}_{0,i\ovl{j}}(s)\\
&=& \partial_k \int_{X_s}{\varphi_{i\jbar}\beta_s^*(\wY)} + \partial_k \int_{X_s}{u_i\cdot u_{\jbar}g dA}\\
&=& \int_{X_s}{L_k(\varphi_{i\jbar})\beta_s^*(\wY)} + \int_{X_s}{\varphi_{i\jbar}L_k(\beta_s^*(\wY))}\\
&+& \int_{X_s}{L_k(u_i \cdot u_{\jbar})g dA} + \int_{X_s}{u_i\cdot u_{\jbar} L_k(g dA)}\\
&=& \int_{X_s}{L_k(\varphi_{i\jbar})\beta_s^*(\wY)} + \int_{X_s}{\varphi_{i\jbar}L_k(\beta_s^*(\wY))}\\
&+& \int_{X_s}{D_k(u_i) \cdot u_{\jbar}g dA} + \int_{X_s}{u_i\cdot D_k(u_{\jbar})g dA},\\
\label{ksym}
\end{eqnarray*}
where we introduced the notation
\[
D_k(u_i) := (\nabla_k + a^z_k \nabla_z)(u_i)
\]
for the covariant derivative in the direction of the horizontal lift $v_k$.  We also made use of Lemma \ref{LieDer}.
Now $L_k(\varphi_{i\jbar}) = v_k(\varphi_{i\jbar}) = v_i(\varphi_{k\jbar})=L_i(\varphi_{k\jbar})$ (see \cite[Lemma 3.2]{LSY04}).
Furthermore, also the third summand is symmetric in $i$ and $k$:
\begin{eqnarray*}
D_k(u_i) &=& (\nabla_k + a_k^z\nabla_z)(\xi_i^w + a_i^z\z)\\
				 &=& \xi^k_{i|k} + \Gamma_w\xi_k^w\xi_i^w + a_k^z\xi^w_{i|z} + a_k^z\Gamma_w\z\xi_i^k
				 + a^z_{i|k}\z + a_i^z \zeta^w_{z|k} + \Gamma_w \xi_k^wa_i\z \\
				 &+& a_k^za_{i|z}\z + a_k^za_i^z\zeta^w_{z|z} + a_k^z\Gamma_w\z a_i\z 	 
\end{eqnarray*}
Because of $a_k^za_{i|z}^z - a_i^za_{k|z}^z = \partial_k\log (g) a_i - \partial_i \log (g) a_k$ and
$a_{i|k}^z - a_{k|i}^z = \partial_i \log (g) a_k - \partial_k \log (g) a_i$
the assertion follows.
For the second and the fourth summand we prove the following proposition
\begin{proposition}
\[
\int_{X_s}{\varphi_{i\jbar}L_k(\beta_s^*(\wY))} = \int_{X_s}{u_k\cdot D_i(u_{\jbar})g dA}
\]
\end{proposition}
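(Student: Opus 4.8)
The plan is to lean on the two results immediately preceding the proposition: the identity $L_{\tilde{v}_k}(\hw\z\zb\,dz\wedge d\zbar)=0$ and the remark that the generalized harmonic representative $u_k$ is the difference $v_k-\tilde{v}_k$ of the two horizontal lifts. Writing $V_k:=v_k-\tilde{v}_k=(a^z_k+\xi^w_k/\z)\dl_z$, this is a \emph{vertical} field along the fibers with $\beta_*(V_k)=u_k$; concretely its coefficient satisfies $\z\cdot(a^z_k+\xi^w_k/\z)=u^w_k$. First I would reduce the Lie derivative to the vertical one. Since $L_k=L_{v_k}=L_{\tilde{v}_k}+L_{V_k}$ and the first summand annihilates $\beta^*\wY$ on the fiber by the lemma above, only $L_{V_k}(\beta^*_s\wY)$ contributes to the integral.

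Next, because $V_k$ is tangent to $X_s$ and $\beta^*_s\wY$ is a top-degree form on the (real two-dimensional) fiber, Cartan's formula collapses to $L_{V_k}(\beta^*_s\wY)=d\,\iota_{V_k}(\beta^*_s\wY)$, and a short contraction computation turns this into $\sqrt{-1}\,\dl_z(\hz\,V^z_k)\,dz\wedge d\zbar$ with $\hz=\hw\z\zb$. Using $\hz V^z_k=\hw\zb\,u^w_k$, this equals $\sqrt{-1}\,\dl_z(\hw\zb\,u^w_k)\,dz\wedge d\zbar$. Then I would integrate against $\varphi_{i\jbar}$ and apply Stokes' theorem on the closed fiber (the $\dl_z(\cdot)\,dz\wedge d\zbar$ term is exact), transferring the derivative onto $\varphi_{i\jbar}$:
$$
\int_{X_s}\varphi_{i\jbar}\,L_k(\beta^*_s\wY)=-\int_{X_s}\dl_z(\varphi_{i\jbar})\,\hw\zb\,u^w_k\,\sqrt{-1}\,dz\wedge d\zbar.
$$

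It then remains to recognize the right-hand side as $\int_{X_s}u_k\cdot D_i(u_{\jbar})\,g\,dA$. The content is the pointwise identity $(D_iu_{\jbar})^{\wbar}=-g^{-1}\zb\,\dl_z\varphi_{i\jbar}$. To establish it, I would first note that $\beta$ is holomorphic, so $\ovl{\beta}$ is killed by $\dl_i$ and $\dl_z$; hence all derivatives of $\xi^{\wbar}_{\jbar}$ and $\zb$ in these directions vanish, and since the Chern connection carries no Christoffel symbol on the conjugate bundle in the $(1,0)$-directions, the covariant derivative collapses to $(D_iu_{\jbar})^{\wbar}=\zb\,(\dl_i a^{\zbar}_{\jbar}+a^z_i\,\dl_z a^{\zbar}_{\jbar})$. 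Using $\dl_z a^{\zbar}_{\jbar}=A_{\jbar}$ together with the last identity of Lemma \ref{Liu}, namely $\dl_k a_{\jbar}=-A_{\jbar}a_k-g^{-1}\dl_z\varphi_{k\jbar}$ (with $k=i$), rewritten as $\dl_z\varphi_{i\jbar}=-g\,(A_{\jbar}a_i+\dl_i a_{\jbar})$, yields exactly the claimed relation. Substituting it back converts the displayed integral into $\int_{X_s}\hw\,u^w_k\,(D_iu_{\jbar})^{\wbar}\,g\,dA=\int_{X_s}u_k\cdot D_i(u_{\jbar})\,g\,dA$, which is the assertion.

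The hard part will be this final matching: identifying the purely metric quantity $\dl_z\varphi_{i\jbar}$ produced by integration by parts with the covariant derivative $D_i(u_{\jbar})$ of the generalized harmonic representative. It rests on combining two facts carefully — the anti-holomorphicity of $\ovl{\beta}$, which collapses $D_i(u_{\jbar})$ to derivatives of $a_{\jbar}$ alone, and the Schumacher--Liu--Sun--Yau identity for $\dl_z\varphi_{i\jbar}$ from Lemma \ref{Liu}. Everything preceding that step is bookkeeping made possible by the splitting $v_k=\tilde{v}_k+V_k$ and by Stokes' theorem on the fiber.
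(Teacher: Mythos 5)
Your proof is correct, and its second half is exactly the paper's argument: the paper likewise derives $(D_i u_{\jbar})^{\wbar} = -g^{\zbar z}\zb\,\dl_z\varphi_{i\jbar}$ from the anti-holomorphicity of $\ovl{\beta}$ (so that only the $a_{\jbar}$-terms survive, with no Christoffel symbols on the conjugate side) combined with the last identity of Lemma~\ref{Liu}, and then concludes with the same boundary-free integration by parts on the fiber. Where you genuinely differ is the first half. The paper computes the $z\zbar$-component of $L_k(\beta_s^*\wY)$ by expanding $[\dl_k + a^z_k\dl_z,\hz]$ term by term, and only at the very end recognizes the outcome as the covariant expression $(\xi^w_k + a^z_k\z)_{;z}\zb\hw$, i.e.\ as $\nabla_z(u^w_k)\zb\hw$; you instead split $v_k = \tilde v_k + V_k$, invoke the lemma $L_{\tilde v_k}(\hw\z\zb\,dz\wedge d\zbar)=0$ (which the paper states and proves but never actually uses in its own proof of this proposition), and apply Cartan's formula to the vertical field $V_k$ to obtain $L_k(\beta_s^*\wY)\vert_{X_s} = \sqrt{-1}\,\dl_z(\hw\zb\,u^w_k)\,dz\wedge d\zbar$ in manifestly exact form. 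This buys two things: Stokes' theorem applies immediately, and the global well-definedness of the $(0,1)$-form $\varphi_{i\jbar}\hw\zb\,u^w_k\,d\zbar$ --- which the paper must verify separately by its contraction argument before integrating by parts --- comes for free, since your form is $\varphi_{i\jbar}\,\iota_{V_k}(\beta_s^*\wY)$. The one point you should state explicitly: $\tilde v_k$ and $V_k$ have poles at the ramification points (the zeros of $\z$), so the splitting $L_{v_k} = L_{\tilde v_k} + L_{V_k}$ and the vanishing lemma are valid only off this finite set; since both $L_{v_k}(\beta_s^*\wY)$ and $\dl_z(\hw\zb\,u^w_k)$ are smooth on all of $X_s$ (the poles of $V_k$ are cancelled by the zeros of $\beta_s^*\wY$), the pointwise identity extends by continuity and nothing is lost, but this sentence is needed --- avoiding any mention of fields with poles is what the paper's pedestrian expansion buys in exchange for its length.
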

\begin{proof}
We write
\[
\hz=\hw\z\zb.
\]
It holds
\begin{eqnarray*}
L_k(\beta_s^*(\wY))_{z\zbar} &=& \left[\partial_k+a_k^z\partial_z,\hz\right] = h_{z\zbar|k} + 
a_k^z h_{z \zbar |z} + a_{k|z}^z\hz\\
										&=& h_{w\wbar|k}\z\zb + \hw\zeta^w_{z|k}\zb + a_k^zh_{w\wbar|z}\z\zb + a_k^z\hw\zeta^w_{z|z}\zb
										+ a_{k|z}^z\hz\\
										 &=& \Gamma_w\hw\xi_k^w\z\zb + \hw\zeta^w_{z|k}\zb + a_k^z\Gamma_w\hw\z\z\zb \\										
&+& a_k^z\hw\zeta^w_{z|z}\zb + a_{k|z}^z\hz.
\end{eqnarray*}
Moreover,
$$
\nabla_{v_i}(u_{\jbar})=(\nabla_i + a_i^z\nabla_z)(\xi_{\jbar}^{\wbar} + a_{\jbar}^{\zbar}\zb)= a_{\jbar|i}^{\zbar}\zb + a_iA_{\jbar z}^{\zbar}\zb = -\ginv \partial_z\varphi_{i\jbar}\zb.
$$
We rewrite the form $\hw \z \zb dz \wedge d \zbar$ as $\hw \zb dw \wedge d \zbar$ and contract it with the vector field $(\xi^w_k+a_k^z\z)\dl_w$. We obtain in this way that the tensor $\zb(\xi_k^w+a_k^z\z)\hw$ and thus also  $\varphi_{i\jbar}\zb(\xi_k^w+a_k^z\z)\hw$ is well-defined.
Stokes' theorem applied to this globally defined $(0,1)$-form $\varphi_{i\jbar}\zb(\xi_k^w+a_k^z\z)\hw$ yields
\[
\int{\nabla_z(\varphi_{i\jbar}\zb(\xi_k^w+a_k^z\z)\hw)dA}=0
\] 
and thus
\[
-\int{\partial_z\varphi_{i\jbar}\zb(\xi_k^w+a_k^z\z)\hw dA} = \int{\varphi_{i\jbar}\zb(\xi_k^w+a_k^z\z)_{;z}\hw dA}.
\]
Now
\[
(\xi_k^w+a_k^z\z)_{;z} = \xi_{k|z}^w + \Gamma_w\z\xi_k^w + a_{k|z}^z\z + a_k^z\zeta^w_{z|z} + 
\Gamma_w\z a_k^z\z , 
\]
so
\begin{eqnarray*}
(\xi_k^w+a_k^z\z)_{;z}\zb\hw &=& \xi_{k|z}^w\zb\hw + \Gamma_w\z\xi_k^w\zb\hw\\ &+& 
a_k^z\Gamma_w\z\hw\z\zb +  a_k^z\hw\zeta^w_{z|z}\zb + a_{k|z}^z\z\zb\hw \\
&=& L_k(\hw\z\zb)_{z\zbar}.
\end{eqnarray*}
This proves the proposition. 
\end{proof}
\section{Computation of the curvature}
The computation of the curvature of the Weil-Petersson metric on the base of a general effectively parametrized family 
$(\lb,f): \X \to Y \times S$ of coverings of Riemann surfaces seems to be difficult and leads to overflowed expressions with hardly any interpretation. The reason for this relies on the fact that in general there is no intimate relation between the hyperbolic metrics on the fibers $X_s$ and the metric on $Y$. But both hermitian metrics contribute to the expression for the Weil-Petersson metric, because it measures the variation of the complex structure on the fibers $X_s$ as well as the variation of the covering map $\lb_s: X_s \to Y$. It is in particular the term $G_0^{WP}$ which is quite difficult to deal with in this context. However, this term disappears if the underlying family of complex structures is locally trivial. We can give a curvature formula for such families. This covers the case of complex subspaces of the Hurwitz space $\Hbn$, which parametrize families of coverings $\lb_s: X_s \to \PG$ where the complex structure of $X_s$ is fixed. Moreover, we can give a curvature formula of the bundle $f_*\lb^*T_{\PG}$, which is then a bundle on the entire Hurwitz space and gives the curvature formula of the subspaces by restriction.  

\subsection{Curvature for a subspace}
We consider the universal family $(\beta,f): \X=(X_s)\to \PG \times \Hbn$ over the Hurwitz space. Let $b>4g-4$. By Serre duality and $\deg(K_{X_s}\otimes (\beta_s^*T_{\PG}))<0$, the space $H^1(X_s,\beta_s^*T_{\PG})$ is trivial. Thus we obtain the short exact sequence    
$$
0 \to H^0(X_s,\beta_s^*T_{\PG}) \to H^0(X_s,N_{\beta_s}) \to H^1(X_s,T_{X_s}) \to 0.
$$ 
This is the tangent sequence belonging to the map $\Hbn \to \M$. For this to be true, we have to restrict to the open part $\M^0$ which parametrizes Riemann surfaces with trivial automorphism group. Denoting the corresponding inverse image by $\Hbn_0$, we get a submersion $\Hbn_0 \to \M^0$. Alternatively, we can move to the universal covering $\tilde{\Hbn}$ and get a submersion $\tilde{\Hbn} \to \mathcal{T}_g$ onto the Teichm\"uller space.
A fixed compact Riemann surface $X$ of genus $g>1$ without non-trivial automorphisms represents the isomorphism class of a complex structure $[X]$, that is a point in $\M^0$. We denote the corresponding fiber under the submersion $\Hbn \to \M^0$ by 
$\Hx$. This subspace is a (maybe non connected) complex submanifold of dimension $r:=b-(3g-3)=2n-(g-1)$. By construction, the fibers of the restricted family
\[
f|_{f^{-1}(\Hx)} \to \Hx
\]
are all isomorphic and by a result of Grauert and Fischer, the family is complex analytic locally trivial. The points of this subspace now parametrize isomorphism classes of simple branched $(n,b)$-coverings where the surface $X$ (i.e. its complex structure) is fixed. Let $s_0 \in \Hx$ be given by a covering $\beta_0: X \to \PG$. We choose local holomorphic coordinates $s^1,\ldots,s^b$ so that the subspace $\Hx$ is locally given by 
\[
\{
s \in \Hbn \; | \; s^{r+1}=\cdots=s^b=0 
\},
\]
and thus we can take $s^1,\ldots,s^r$ as local coordinates on $\Hx$. Because our computations are local in the base, we can restrict our family to a possibly smaller base $S \subset \Hx$ and assume that the family $\X_S=X \times S$ is in fact trivial. We study the metric tensor $G_{i\jbar}^{WP}$ for the base $S$ and the family
$$
(\beta,f): X \times S \to \PG \times S.
$$
The family of complex structures is trivial, so in particular infinitesimal trivial. Hence by Corollary \ref{InfTriv} $\varphi_{i\jbar}(z,s)=0$  on $X \times S$ for $1 \leq i,j \leq r$. Thus the first summand $G_{0,i\jbar}^{WP}$ does not contribute to the metric. Furthermore, we have on $X\times S$ the constant family of metric tensors $g(z,s)=g(z)$, where $g$ is the hyperbolic metric of constant Ricci curvature $-1$ on $X$. The coordinate vector fields $(\dl_i)_{1 \leq i \leq r}$ hence exist on $X\times S$ and coincide with the horizontal lifts $(v_i)$ (that means $a_i=0$ for all $1\leq i\leq r$). The expression for the metric tensor thus reduces to
\[
G_{i\jbar}(s_0) = \int_{X} {\xi^w_i \xi^{\wbar}_{\jbar}\hw \; g dA}. 
\] 
We start computing by using again Lie derivatives:
\begin{eqnarray*}
\partial_k G_{i\jbar}(s_0) &=& \int_X{L_k(\xi^w_i \xi^{\wbar}_{\jbar}\hw \; g dA)} \\ 
&=& \int{\partial_k(\xi^w_i \xi^{\wbar}_{\jbar}\hw)\; g dA}
\end{eqnarray*}
Since
\begin{eqnarray}
\partial_k(\xi^w_i \xi^{\wbar}_{\jbar}\hw) = \xi^w_{i|k}\xi^{\wbar}_{\jbar}\hw 
+ \xi^w_i \xi^{\wbar}_{\jbar} \partial_k(h(\beta(z,s))) 
\label{kovAbl}
\end{eqnarray}
and
\begin{eqnarray*}
\partial_k(h(\beta(z,s))) &=& \frac{\dl h}{\dl w}\left(\beta(z,s)\right) \cdot 
\frac{\dl \beta}{\dl s^k} \\ &=& (\dl_wh)(\beta(z,s)) \xi^w_k \\
&=& (h(\beta(z,s)))^{-1} (\dl_wh)(\beta(z,s))\xi^w_k(z,s)h(\beta(z,s))\\
&=& \Gamma_w \xi^w_k \hw
\end{eqnarray*}
if we set $\Gamma_w = \beta^*\Gamma_h$ where $\Gamma_h = \dl_w \log (h) = (\dl_w h)h^{-1}$,
we identify \ref{kovAbl} as a covariant derivative and write 
\[
\partial_k(\xi^w_i \xi^{\wbar}_{\jbar}\hw) = (\nabla_k\xi^w_i)\xi^{\wbar}_{\jbar}\hw.
\]
Thus, as a first result we obtain
\begin{lemma}
\label{ersteAbl}
$$
\dl_k G_{i\jbar}(s_0) = \int_X{(\nabla_k \xi^w_i)\xi^{\wbar}_{\jbar}\hw \; g dA}
$$
\end{lemma}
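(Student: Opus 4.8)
The plan is to differentiate the reduced integral formula for $G_{i\jbar}$ directly, converting the base derivative $\dl_k$ into a fiberwise Lie derivative and then evaluating the integrand by the product rule. The whole argument rests on two simplifications special to the trivial family $X \times S$: the horizontal lift $v_k$ coincides with the coordinate field $\dl_k$ (since $a_k=0$), and the fiber area element $g\,dA$ is independent of $s$.

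First I would apply Lemma \ref{diffInt} to bring $\dl_k$ inside the fiber integral. Since $\xi^w_i \xi^{\wbar}_{\jbar}\hw\, g\, dA$ is a smooth $(1,1)$-form on $\X$, this gives $\dl_k G_{i\jbar}(s_0) = \int_X L_k(\xi^w_i \xi^{\wbar}_{\jbar}\hw\, g\, dA)$. Because $v_k=\dl_k$ here, the Lie derivative acts on coefficients by $\dl_k$, and by Lemma \ref{LieDer} — equivalently, because $g\,dA$ does not depend on $s$ — the area element is annihilated, leaving $\dl_k G_{i\jbar}(s_0)=\int_X \dl_k(\xi^w_i \xi^{\wbar}_{\jbar}\hw)\, g\, dA$.

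Next I would expand $\dl_k(\xi^w_i \xi^{\wbar}_{\jbar}\hw)$ by the product rule. The key observation is that $\xi^{\wbar}_{\jbar}=\ovl{\dl_j\beta}$ is antiholomorphic in $s$, so the holomorphic derivative $\dl_k$ annihilates it; only the terms with $\dl_k\xi^w_i=\xi^w_{i|k}$ and with $\dl_k\hw$ survive. For the latter I would use the chain rule $\dl_k h(\beta(z,s))=(\dl_w h)(\beta)\,\xi^w_k=\Gamma_w\xi^w_k\hw$, with $\Gamma_w=(\dl_w h)h^{-1}$. Collecting the two surviving terms yields $\dl_k(\xi^w_i \xi^{\wbar}_{\jbar}\hw)=(\xi^w_{i|k}+\Gamma_w\xi^w_k\xi^w_i)\xi^{\wbar}_{\jbar}\hw$, and the bracketed combination is precisely the covariant derivative $\nabla_k\xi^w_i$ introduced in the preparatory subsection.

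This is in essence a bookkeeping computation, so I do not anticipate a real obstacle. The only steps needing care are the justification that $\dl_k$ commutes with the fiber integral in this locally trivial setting (supplied by Lemma \ref{diffInt}) and the recognition that $\dl_k$ kills the conjugate factor $\xi^{\wbar}_{\jbar}$ — it is exactly this vanishing that lets the remaining terms assemble into a single covariant derivative rather than a more complicated expression. Substituting back gives $\dl_k G_{i\jbar}(s_0)=\int_X(\nabla_k\xi^w_i)\,\xi^{\wbar}_{\jbar}\hw\, g\, dA$, which is the assertion.
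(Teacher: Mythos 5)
Your proposal is correct and follows essentially the same route as the paper: pull $\dl_k$ inside the fiber integral as a Lie derivative (Lemma \ref{diffInt}), use $v_k=\dl_k$ and the $s$-independence of $g\,dA$ in the trivial family to reduce to $\int_X \dl_k(\xi^w_i \xi^{\wbar}_{\jbar}\hw)\,g\,dA$, then apply the product rule (with $\dl_k$ killing the antiholomorphic factor $\xi^{\wbar}_{\jbar}$) and the chain rule $\dl_k\hw=\Gamma_w\xi^w_k\hw$ to recognize the covariant derivative $\nabla_k\xi^w_i$. This matches the paper's argument step for step.
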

We continue computing
\begin{eqnarray*}
\dl_{\lbar}\dl_k G_{i\jbar}(s_0) &=& \dl_{\lbar} \int_X{(\nabla_k\xi^w_i)\xi^{\wbar}_{\jbar}\hw \; g dA}\\
&=& \int_X{\dl_{\lbar}\left((\nabla_k\xi^w_i)\xi^{\wbar}_{\jbar}\hw \right)  g dA} 
\end{eqnarray*}
Analogously to above we get
\begin{eqnarray*}
\dl_{\lbar}\left((\nabla_k\xi^w_i)\xi^{\wbar}_{\jbar}\hw \right) &=& 
\dl_{\lbar}(\nabla_k \xi^w_i)\xi^{\wbar}_{\jbar}\hw + 
\nabla_k \xi^w_i \nabla_{\lbar}(\xi^{\wbar}_{\jbar})\hw
\end{eqnarray*}
where
\begin{eqnarray*}
\dl_{\jbar} (\nabla_k \xi^w_i) &=& \dl_{\lbar}\left(\xi^w_{i|k} + \Gamma_w \xi^w_k \xi^w_i\right)\\
&=& \dl_{\lbar}\Gamma_w \xi^w_k \xi^w_i
\end{eqnarray*}
But now
\begin{eqnarray*}
\dl_{\lbar} \Gamma_w &=& \dl_{\lbar} \left( h(\beta(z,s))^{-1} (\dl_wh)(\beta(z,s)) \right) \\
&=& - (\dl_{\wbar}h) \left( \beta(z,s)\right)\xi^{\wbar}_{\lbar}(\dl_w h)(\beta(z,s))h(\beta(z,s))^{-2}\\ &+& h(\beta(z,s))^{-1}(\dl_{\wbar}\dl_w h)(\beta(z,s))\xi^{\wbar}_{\lbar}. 
\end{eqnarray*}
Since
\begin{eqnarray*}
\dl_{\wbar} \dl_w \log h &=& \dl_{\wbar} \left((\dl_w h)h^{-1}\right)\\
&=& (\dl_{\wbar} \dl_w h)h^{-1} - (\dl_w h)(\dl_{\wbar})h^{-2}\\
&=& -K_h,
\end{eqnarray*}
we have $\dl_{\lbar}\Gamma_w = -K_{w\wbar} \xi^{\wbar}_{\lbar}$, where we set
\[
K_{w\wbar}:=\beta^*(K_h)= \beta^*(-\dl_{\wbar}\dl_w \log h) = K_{\PG} \hw.
\]
(Of course we have $K_{\PG}=1$, but we prefer to write $K_{\PG}$ for keeping track of the influence of $K_{\PG}$.) 
Altogether we obtain
\begin{lemma}
\[
\dl_{\lbar}\dl_k \G_{i\jbar}(s_0) = 
\int_X{(\nabla_k \xi^w_i)(\nabla_{\lbar}\xi^{\wbar}_{\jbar})\hw \; g dA}
- K_{\PG} \int_X{\xi^w_i\xi^{\wbar}_{\jbar}\xi^w_k \xi^{\wbar}_{\lbar}\hw^2 \; g dA}
\]
\end{lemma}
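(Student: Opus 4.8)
The plan is to start from the first-derivative formula of Lemma \ref{ersteAbl} and differentiate it once more in the antiholomorphic direction $\dl_{\lbar}$. Because we have reduced to a locally trivial family of complex structures, the fiber $X$ and its hyperbolic area element $g\,dA$ are independent of $s$; in particular $a_{\lbar}=0$, so the horizontal lift $v_{\lbar}$ equals $\dl_{\lbar}$ and the Lie derivative of Lemma \ref{diffInt} collapses to ordinary differentiation under the integral sign. Thus I may write
\[
\dl_{\lbar}\dl_k G_{i\jbar}(s_0) = \int_X \dl_{\lbar}\!\left((\nabla_k\xi^w_i)\,\xi^{\wbar}_{\jbar}\,\hw\right) g\,dA,
\]
reducing the whole statement to differentiating the scalar integrand.

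Next I would apply the product rule, organizing the computation so that the metric factor is absorbed covariantly. Since $\nabla_{\lbar}\hw=0$ (the antiholomorphic analogue of the earlier Lemma $\nabla_z(\hw)=\nabla_k(\hw)=0$, obtained by conjugation), the factor $\xi^{\wbar}_{\jbar}\hw$ differentiates as $\nabla_{\lbar}(\xi^{\wbar}_{\jbar})\hw$, while $\nabla_k\xi^w_i$ differentiates as an ordinary derivative. This yields
\[
\dl_{\lbar}\!\left((\nabla_k\xi^w_i)\,\xi^{\wbar}_{\jbar}\,\hw\right) = \dl_{\lbar}(\nabla_k\xi^w_i)\,\xi^{\wbar}_{\jbar}\,\hw + (\nabla_k\xi^w_i)\,\nabla_{\lbar}(\xi^{\wbar}_{\jbar})\,\hw,
\]
and the second term, after integration, is already the first summand of the claimed formula, requiring no further work.

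The substantive step is the first term. Writing $\nabla_k\xi^w_i = \xi^w_{i|k}+\Gamma_w\xi^w_k\xi^w_i$ and recalling that $\beta(z,s)$ is holomorphic in $s$, so that $\xi^w_i$, $\xi^w_k$ and $\xi^w_{i|k}$ all have vanishing $\dl_{\lbar}$-derivative, the only surviving contribution is $\dl_{\lbar}\Gamma_w\cdot\xi^w_k\xi^w_i$. This is precisely where the geometry of the target enters: I compute $\dl_{\lbar}\Gamma_w$ by the chain rule through the antiholomorphic dependence on $\beta$, using $\Gamma_w = h^{-1}\dl_w h$ evaluated at $\beta(z,s)$, whence $\dl_{\lbar}\Gamma_w = (\dl_{\wbar}\dl_w\log h)\,\xi^{\wbar}_{\lbar}$. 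Recognizing $\dl_{\wbar}\dl_w\log h = -K_h$ gives $\dl_{\lbar}\Gamma_w = -K_{w\wbar}\,\xi^{\wbar}_{\lbar} = -K_{\PG}\hw\,\xi^{\wbar}_{\lbar}$. Substituting and integrating produces the negative term $-K_{\PG}\int_X \xi^w_i\xi^{\wbar}_{\jbar}\xi^w_k\xi^{\wbar}_{\lbar}\hw^2\,g\,dA$, which completes the formula.

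I expect this computation of $\dl_{\lbar}\Gamma_w$ to be the only genuinely nontrivial point, as it is where the curvature of the metric $h$ on $\PG$ is generated; everything else is forced by the holomorphicity of $\beta$ in $s$ and by metric compatibility. The remaining care is purely bookkeeping: keeping the pullback factor $\xi^{\wbar}_{\lbar}$ correctly attached, so that the intrinsic curvature $K_h$ appears as $K_{w\wbar}=\beta^*(K_h)$, and recording that here $K_{\PG}=1$ even though we retain it symbolically.
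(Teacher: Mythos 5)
Your proposal is correct and follows essentially the same route as the paper's own proof: differentiate under the integral sign (justified because the family of complex structures is trivial, so the horizontal lifts are the coordinate fields and $g\,dA$ is $s$-independent), use metric compatibility to absorb $\dl_{\lbar}\hw$ into $\nabla_{\lbar}(\xi^{\wbar}_{\jbar})$, and observe that holomorphy of $\beta$ in $s$ leaves only the term $\dl_{\lbar}\Gamma_w\,\xi^w_k\xi^w_i$, which the identity $\dl_{\wbar}\dl_w\log h=-K_h$ converts into $-K_{w\wbar}\xi^{\wbar}_{\lbar}\,\xi^w_k\xi^w_i$ with $K_{w\wbar}=K_{\PG}\hw$. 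This is exactly the paper's computation, so there is nothing to add.
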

Now we choose normal coordinates
around the point $s_0$, that means coordinates such that
\[
\dl_k G_{i\jbar}(s_0)= 0 = \dl_{\lbar}G_{i\jbar}(s_0),
\]
that is
\[
\int_X{(\nabla_k \xi^w_i)\xi^{\wbar}_{\jbar})\hw \; g dA} = 0.
\]
Since the vector fields $\{\xi^w_k\}\dl_w$ for $1 \leq k \leq r$ form a base of $H^0(X,\beta_0^*(T_Y))$, this means that
\[
\left( \nabla_k\xi^w_i \right) \bot \; H^0(X,\beta_0^*(T_Y)).
\]
Since $\laplacedbar = \dbarstar \dbar$ on the space of differentiable vector fields with values in $\beta_0^*(T_Y)$ and because of the identity $\id = \Hbar + \Gbar \laplacedbar$, we can now write
\begin{eqnarray*}
\int_X{(\nabla_k \xi^w_i)(\nabla_{\lbar}\xi^{\wbar}_{\jbar})\hw \; g dA} &=& 
\left\langle \nabla_k\xi^w_i, \nabla_l \xi^w_j \right\rangle \\
&=& \left\langle \Gbar \laplacedbar \left( \nabla_k\xi^w_i \right), \nabla_l \xi^w_j \right\rangle \\
&=& \left\langle \dbarstar \Gbar \dbar \left( \nabla_k\xi^w_i \right), \nabla_l \xi^w_j \right\rangle \\
&=& \left\langle \Gbar \dbar \left( \nabla_k\xi^w_i \right), \dbar \nabla_l \xi^w_j \right\rangle
\end{eqnarray*}
Since
\[
\nabla_k \xi^w_i = \xi^w_{i|k} + \Gamma_w \xi^w_k \xi^w_i,
\]
we have
\[
\dbar\left( \nabla_k \xi^w_i \right) = - \Kw \xi^w_k \xi^w_i \zb d\zbar,
\]
and thus
\begin{eqnarray*}
\int_X{(\nabla_k \xi^w_i)(\nabla_{\lbar}\xi^{\wbar}_{\jbar})\hw \;  g dA} &=& 
\left\langle \Gbar \dbar \left( \nabla_k\xi^w_i \right), \dbar \nabla_l \xi^w_j \right\rangle \\
&=& K_{\PG}^2 \left\langle  \Gbar(\xi^w_i \xi^w_k \zb \hw d\zbar), \xi^w_j \xi^w_l \zb \hw d\zbar \right\rangle
\end{eqnarray*}
Introducing for abbreviating
\[
\psi_{ik} d\zbar \otimes \dl_w := \left( \xi^w_i \xi^w_k \zb \hw \right) d\zbar \otimes \dl_w, 
\]
we can now write
\begin{eqnarray*}
\int_X{(\nabla_k \xi^w_i)(\nabla_{\lbar}\xi^{\wbar}_{\jbar})\hw \;  g dA} &=&
K_{\PG}^2 \int_X{\Gbar \left( \psi_{ik} \right)^w_{\zbar} (\psi_{\jbar\lbar})^{\wbar}_z \hw \; dz\wedge d\zbar } \\
&=& K_{\PG}^2 \int_X{\Gbar \left( \psi_{ik} \right) \cdot \psi_{\jbar\lbar} \; g dA}.
\end{eqnarray*}
We arrive at
\begin{theorem}
\label{kruemmungUnterraum}
The curvature of the Weil-Petersson metric for the subspace $\Hx \subset \Hbn$ is given by
\begin{eqnarray*}
R_{i\jbar k \lbar}(s_0) = &-&  \int_X{\Gbar \left( \psi_{ik} \right) \cdot \psi_{\jbar\lbar}\; g dA} \\
                          &+& \int_X{(\xi_i \cdot \xi_{\jbar}) \, (\xi_k \cdot \xi_{\lbar}) \; g dA}  
\label{kr"ummung}
\end{eqnarray*}
where $\psi_{ik} d\zbar \otimes \dl_w := \left( \xi^w_i \xi^w_k \zb \hw \right) d\zbar \otimes \dl_w.$
\end{theorem}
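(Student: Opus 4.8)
The plan is to exploit that the Weil-Petersson metric is K\"ahler, so that at the chosen point $s_0$ the curvature can be read off from the second mixed derivative of the metric tensor alone. In the holomorphic normal coordinates already fixed around $s_0$, for which $\dl_k\G_{i\jbar}(s_0)=0=\dl_{\lbar}\G_{i\jbar}(s_0)$, the standard K\"ahler curvature formula collapses to
$$
R_{i\jbar k\lbar}(s_0) = -\dl_{\lbar}\dl_k\G_{i\jbar}(s_0),
$$
the usual first-derivative correction term dropping out precisely because of the normal-coordinate condition. Note also that, since the family of complex structures over $\Hx$ is genuinely (not merely infinitesimally) trivial, we have $\varphi_{i\jbar}\equiv 0$ on $X\times S$, hence $G_0^{WP}\equiv 0$ on $S$ and only $G_1^{WP}=\G$ needs to be differentiated.

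First I would insert the second-derivative computation established just above,
$$
\dl_{\lbar}\dl_k\G_{i\jbar}(s_0) = \int_X{(\nabla_k\xi^w_i)(\nabla_{\lbar}\xi^{\wbar}_{\jbar})\hw\,g\,dA} - K_{\PG}\int_X{\xi^w_i\xi^{\wbar}_{\jbar}\xi^w_k\xi^{\wbar}_{\lbar}\hw^2\,g\,dA},
$$
so that $R_{i\jbar k\lbar}(s_0)$ becomes the negative of these two integrals. The second one is already in final form: recognising $\xi^w_i\xi^{\wbar}_{\jbar}\hw=\xi_i\cdot\xi_{\jbar}$ and $\xi^w_k\xi^{\wbar}_{\lbar}\hw=\xi_k\cdot\xi_{\lbar}$ and setting $K_{\PG}=1$ turns it into $+\int_X{(\xi_i\cdot\xi_{\jbar})(\xi_k\cdot\xi_{\lbar})\,g\,dA}$.

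The core of the argument is the first integral. The vanishing $\dl_k\G_{i\jbar}(s_0)=0$ for all $j$, together with the fact that the sections $\xi^w_k\dl_w$ span $H^0(X,\beta_0^*T_{\PG})$, forces $\nabla_k\xi^w_i\perp H^0(X,\beta_0^*T_{\PG})$, so its harmonic part vanishes. Using $\id=\Hbar+\Gbar\laplacedbar$ with $\laplacedbar=\dbarstar\dbar$, shifting $\Gbar$ and $\dbarstar$ across the $L^2$ pairing, and substituting $\dbar(\nabla_k\xi^w_i)=-\Kw\xi^w_k\xi^w_i\zb\,d\zbar$ (valid because the $\xi^w_i$ are fibrewise holomorphic, so that only the $\zb$-contribution from $\dl_{\zbar}\Gamma_w$ survives), I would rewrite this integral as the Green-operator pairing $K_{\PG}^2\int_X{\Gbar(\psi_{ik})\cdot\psi_{\jbar\lbar}\,g\,dA}$ of the tensors $\psi_{ik}=\xi^w_i\xi^w_k\zb\hw$. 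With $K_{\PG}=1$ it contributes $-\int_X{\Gbar(\psi_{ik})\cdot\psi_{\jbar\lbar}\,g\,dA}$, and assembling the two pieces yields the claimed formula.

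Since the genuinely computational steps are already packaged as the preceding lemmas, the remaining difficulty is mainly one of consistency and bookkeeping. The hard point to get right is that the single choice of normal coordinates must simultaneously kill the correction term in the curvature formula and supply the orthogonality $\nabla_k\xi^w_i\perp H^0(X,\beta_0^*T_{\PG})$ needed for the Green-operator manipulation; both rest on the same vanishing of first derivatives, so no further analytic input is required. Beyond that, I would carefully track the overall sign coming from $R=-\dl_{\lbar}\dl_k\G$ and the factor $\Kw=K_{\PG}\hw$ through the passage from $\dbar(\nabla_k\xi^w_i)$ to $\psi_{ik}$, after which setting $K_{\PG}=1$ gives exactly the stated expression.
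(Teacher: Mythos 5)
Your proposal is correct and follows essentially the same route as the paper's own proof: triviality of the family over $\Hx$ to kill $G_0^{WP}$, the second-derivative lemma, K\"ahler normal coordinates so that $R_{i\jbar k\lbar}(s_0)=-\dl_{\lbar}\dl_k G_{i\jbar}(s_0)$, and the orthogonality $\nabla_k\xi^w_i\perp H^0(X,\beta_0^*T_{\PG})$ combined with $\id=\Hbar+\Gbar\laplacedbar$ and $\dbar(\nabla_k\xi^w_i)=-\Kw\xi^w_k\xi^w_i\zb\,d\zbar$ to produce the Green-operator term. The sign and $K_{\PG}$ bookkeeping you describe matches the paper exactly, so there is no gap.
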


\subsection{Curvature of $f_*\beta^*T_{\PG}$}
We consider the universal family of simple $(n,b)$-coverings over the Hurwitz space $(\beta,f): \X \to \PG \times \Hbn$ for $b>4g(X_s)-4$. By Serre duality and $\deg(K_{X_s}\otimes (\beta_s^*T_Y)^*)<0$ we get $h^1(X_s,\beta_s^*T_{\PG})=0$ for all $s \in \Hbn$ in this case. Hence the dimension of $H^0(X_s,\beta_s^*T_{\PG})$ is constant on $\Hbn$ and different from $0$ by the theorem of Riemann-Roch. Thus, the sheaf $f_*\beta^*T_{\PG}$ is locally free on $\Hbn$, that is a vector bundle. We denote its rang by 
$r:=h^0(X_s,\beta_s^*T_{\PG})$. The fiber of this bundle over $s \in \Hbn$ is just the space $H^0(X_s,\beta_s^*T_{\PG})$. For a neighborhood $U\subset \Hbn$, we identify the space of holomorphic sections $\Gamma(U,f_*\beta^*T_{\PG})$ over $U$ with 
$\Gamma(f^{-1}(U),\beta^*T_{\PG})$, that is the space of holomorphic vector fields on $f^{-1}(U)=:\X_U$ with values in the pullback of the tangent bundle $\beta^*T_{\PG}$. For a base of local holomorphic sections $u_1, \ldots, u_r \in H^0(\X_U,\beta^*T_{\PG})$, the natural $L^2$-metric is given by 
$$
G_{i \jbar}(s)=\int_{X_{s}}u_i(z,s) \ovl{u_j(z,s)}h(\beta(z,s))g(z,s)dA .
$$
This metric coincides with the induced Weil-Petersson metric $G_1^{WP}$ on the subbundle of the tangent bundle
$$
f_*\beta^*T_{\PG} \subset T_{\Hbn}.
$$ 
\begin{remark}
The bundle $f_*\beta^*T_Y$ and the corresponding metric are also defined for a family $(\beta,f): \X \to Y \times S$ when $Y$ is a torus. In this case $T_Y$ is trivial. Hence also $\beta^*T_Y$ is trivial on $\X$ and thus $f_*\beta^*T_Y$ is the trivial line bundle $\Oh_S$ on the base $S$, because $f$ has connected fibers. Taking a global trivialising section $u\equiv 1$ on $S$, we obtain a constant metric on the bundle $S \times \C$. For the case $g(Y)>1$ we have $h^0(X_s,\beta_s^*T_Y)=0$, since $\deg\beta^*T_Y<0$. In this case, the direct image sheaf $R^1f_*\beta^*T_Y$ is always a vector bundle on $S$.
\end{remark}
We start computing the curvature of the bundle $f_*\beta^*T_{\PG}$ on the base $\Hbn$ of dimension $b>4g(X_s)-4$. For this, we choose local coordinates $s^1,\ldots,s^b$ on $\Hbn$ in a neighborhood of a fixed point $s$ and an orthogonal frame
$u_1,\ldots,u_r$ of local holomorphic sections such that
$$
G_{i\jbar}(s)=\delta_{i\jbar}
$$
and
$$
\dl_kG_{i\jbar}(s)=0
$$
for all $1 \leq k \leq b$ und $1 \leq i,j \leq r$. 

We start computing and will use $L_k(g\, dA)=0$ (see Lemma \ref{LieDer}) several times. We get for the first derivative 
$$
\dl_k G_{i \jbar} (s)= \int_{X_s}{L_k(u_i^wu_{\jbar}^{\wbar}\hw \; gdA)}= \int_{X_s}{D_k(u_i^w)u_{\jbar}^{\wbar}\hw \; gdA},
$$
where $D_k= \nabla_k + a_k^z \nabla_z$ and $a_k^z$ is now in general only differentiable. Here we used that $u_{\jbar}$ is anti-holomorphic in $z$ and $s=(s^1,\ldots,s^b)$. For the second derivative, we obtain
\begin{eqnarray*}
\dl_{\lbar} \dl_k G_{i \jbar}(s) &=& \int_{X_s}{D_k(u_i^w)D_{\lbar}(u_{\jbar}^{\wbar})\hw \; gdA} \\
&+& \int_{X_s}{D_{\lbar}D_k(u_i^w)u_{\jbar}^{\wbar}\hw \; gdA}
\end{eqnarray*}
where
\begin{eqnarray*}
D_k(u_i) &=& u_{i;k} + a_k^zu_{i;z} \\
&=& u_{i,k} + \Gamma_w \xi^w_ku_i + a_k^z(u_{i,z} + \Gamma_w \z u_i)
\end{eqnarray*}
Since $a_k$ is in general not holomorphic,  we have
\begin{eqnarray*}
D_{\lbar}D_k(u_i) &=& (\dl_{\lbar} + a_{\lbar}^{\zbar}\dl_{\zbar})D_ku_i \\
&=& -\Kw\xi^{\wbar}_{\lbar}\xi^w_ku_i - a_{\lbar}^{\zbar}\Kw\zb\xi^w_ku_i \\
&-&(A_{k\zbar}^z a_{\lbar}^{\zbar} + g^{-1}\dl_{\zbar}\varphi_{k\lbar}) u_{i;z}\\
&+& a_{\lbar}^{\zbar}A_{k\zbar}^z u_{i;z} \\
&-& a_k^z \Kw \xi^{\wbar}_{\lbar}\z u_i - a_{\lbar}^{\zbar}a_k^z\Kw\zb\z u_i \\
&=& -\Kw (\xi^{\wbar}_{\lbar} + a_{\lbar}^{\zbar}\zb)(\xi^w_k + a_k^z \z)u_i \\
&-& g^{-1}\dl_{\zbar}\varphi_{k\lbar} u_{i;z}.
\end{eqnarray*}
Here we used equation $(4.5)$ of Lemma \ref{Liu}.
Altogether we obtain
\begin{eqnarray*}
-\dl_{\lbar} \dl_k G_{i\jbar}^{WP,1} (s)= &-& \int_{X_s}{D_k(u_i^w)D_{\lbar}(u^{\wbar}_{\jbar})\hw \; gdA} \\
&+&  K_{\PG}\int_{X_s}{(\xi^w_k + a_k^z \z)(\xi^{\wbar}_{\lbar} + a_{\lbar}^{\zbar}\zb)u_i^w u_{\jbar}^{\wbar}\hw^2 \; gdA} \\
&+& \int_{X_s}{g^{-1}\dl_{\zbar}\varphi_{k\lbar} u_{i;z}^w u_{\jbar}^{\wbar}\hw \; gdA}.
\end{eqnarray*}
Using Stokes' theorem, the third term can be rewritten as
\begin{eqnarray*}
&& \int_{X_s}{g^{-1}\dl_{\zbar}\varphi_{k\lbar} u_{i;z}^w u_{\jbar}^{\wbar}\hw \; gdA}\\
=&-& \int_{X_s}{g^{-1} \dl_z\dl_{\zbar}\varphi_{k\lbar} u_{i}^w u_{\jbar}^{\wbar}\hw \; gdA}\\
=&& \int_{X_s}{\Box\varphi_{k\lbar} u_{i}^w u_{\jbar}^{\wbar}\hw \; gdA}\\
\end{eqnarray*}
or alternatively as
\begin{eqnarray*}
&& \int_{X_s}{g^{-1}\dl_{\zbar}\varphi_{k\lbar} u_{i;z}^w u_{\jbar}^{\wbar}\hw \; gdA}\\
=&& \int_{X_s}{g^{-1} \varphi_{k\lbar}\Kw \z \zb u_{i}^w u_{\jbar}^{\wbar}\hw \; gdA}\\
&-& \int_{X_s}{g^{-1}\varphi_{k\lbar} u_{i;z}^w u_{\jbar;\zbar}^{\wbar}\hw \; gdA}\\
=&& K_{\PG} \int_{X_s}{\varphi_{k\lbar}u_{i}^w u_{\jbar}^{\wbar} \hw \; \beta_s^*(\omega_Y)}\\
&-& \int_{X_s}{\varphi_{k\lbar} (u_{i;z}^w/\z) (u_{\jbar;\zbar}^{\wbar}/\zb) \; \beta_s^*(\omega_Y)}\\
\end{eqnarray*}
\begin{theorem}
\label{KrBdl}
The curvature of the Weil-Petersson metric on the subbundle $f_*\beta^*T_{\PG} \subset T_{\Hbn}$ is given by
\begin{eqnarray*}
R^{WP}_{i\jbar k\lbar}(s)=&-& \int_{X_s}{D_k(u_i^w)D_{\lbar}(u^{\wbar}_{\jbar})\hw \; gdA} \\
&+&  \int_{X_s}{u_i^w u_{\jbar}^{\wbar}(\xi^w_k + a_k^z \z)(\xi^{\wbar}_{\lbar} + a_{\lbar}^{\zbar}\zb)\hw^2 \; gdA} \\
&+&  \int_{X_{s}}{\Box\varphi_{k\lbar} u_{i}^w u_{\jbar}^{\wbar}\hw \; gdA}
\end{eqnarray*}
\end{theorem}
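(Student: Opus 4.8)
The plan is to read off the curvature from the second mixed derivative of the metric at a point $s$ where we have fixed normal coordinates, so that $G_{i\jbar}(s)=\delta_{i\jbar}$ and $\partial_k G_{i\jbar}(s)=0$. For a Hermitian holomorphic bundle the Chern curvature in such a frame is simply $R^{WP}_{i\jbar k\lbar}(s)=-\partial_{\lbar}\partial_k G_{i\jbar}(s)$, the correction term quadratic in $\partial G$ vanishing by the normalization. Hence the entire statement reduces to evaluating $\partial_{\lbar}\partial_k$ of the $L^2$-pairing $G_{i\jbar}(s)=\int_{X_s}u_i^w u_{\jbar}^{\wbar}\hw\,g\,dA$ and identifying the three resulting integrals with the asserted summands.

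First I would differentiate once. Since the integrand is the fibre restriction of a $(1,1)$-form on $\X$, Lemma \ref{diffInt} lets me replace $\partial_k$ by the Lie derivative $L_k$ along the horizontal lift $v_k$; Lemma \ref{LieDer} gives $L_k(g\,dA)=0$, and the anti-holomorphicity of $u_{\jbar}$ in both $z$ and $s$ kills all $\bar\partial$-type contributions. What survives is $\partial_k G_{i\jbar}(s)=\int_{X_s}D_k(u_i^w)u_{\jbar}^{\wbar}\hw\,g\,dA$ with $D_k=\nabla_k+a_k^z\nabla_z$, where now $a_k^z$ is only differentiable, because the underlying family of complex structures is no longer trivial.

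Next I differentiate with $\partial_{\lbar}$. This splits into the term $\int_{X_s}D_k(u_i^w)D_{\lbar}(u_{\jbar}^{\wbar})\hw\,g\,dA$, already the first summand (up to the global sign), and the term $\int_{X_s}D_{\lbar}D_k(u_i^w)u_{\jbar}^{\wbar}\hw\,g\,dA$. Computing the second covariant derivative $D_{\lbar}D_k(u_i)$ is the core of the argument: I expand $D_{\lbar}=\partial_{\lbar}+a_{\lbar}^{\zbar}\partial_{\zbar}$ acting on $u_{i;k}+a_k^z u_{i;z}$ and substitute $\partial_{\lbar}\Gamma_w=-\Kw\xi_{\lbar}^{\wbar}$ together with identity $(4.5)$ of Lemma \ref{Liu}, namely $\partial_{\lbar}a_k=-A_k a_{\lbar}-g^{-1}\partial_{\zbar}\varphi_{k\lbar}$. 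After collecting terms these recombine into the compact form
\[
D_{\lbar}D_k(u_i)=-\Kw(\xi_{\lbar}^{\wbar}+a_{\lbar}^{\zbar}\zb)(\xi_k^w+a_k^z\z)u_i-g^{-1}\partial_{\zbar}\varphi_{k\lbar}\,u_{i;z},
\]
whose first part produces the second (the $\hw^2$) summand via $\Kw\hw=K_{\PG}\hw^2$, and whose second part leaves the integral $\int_{X_s}g^{-1}\partial_{\zbar}\varphi_{k\lbar}\,u_{i;z}^w u_{\jbar}^{\wbar}\hw\,g\,dA$.

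Finally I would treat this last integral by integration by parts in $z$: using $\nabla_z\hw=0$ together with the holomorphicity of $u_i$ and the anti-holomorphicity of $u_{\jbar}$, the $z$-derivative implicit in $u_{i;z}$ transfers cleanly onto $\partial_{\zbar}\varphi_{k\lbar}$ with no surviving boundary or Christoffel terms, giving $-\int_{X_s}g^{-1}\partial_z\partial_{\zbar}\varphi_{k\lbar}\,u_i^w u_{\jbar}^{\wbar}\hw\,g\,dA=\int_{X_s}\Box\varphi_{k\lbar}\,u_i^w u_{\jbar}^{\wbar}\hw\,g\,dA$, the third summand. Assembling the three pieces and multiplying by $-1$ (using $K_{\PG}=1$) yields the stated formula. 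The main obstacle is precisely this middle step: in the genuinely \emph{coupled} situation the function $a_k^z$ is only $C^\infty$, so none of the terms $\partial_{\lbar}a_k$ or $\partial_{\lbar}\Gamma_w$ drops out for free, and one must carefully track every cross term and verify that the non-harmonic contributions organize themselves exactly into $\partial_{\zbar}\varphi_{k\lbar}$; this is what makes the second covariant derivative collapse into the clean two-term expression above.
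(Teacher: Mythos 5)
Your proposal is correct and follows essentially the same route as the paper's proof: normal coordinates plus Lie-derivative differentiation under the fiber integral (Lemmas \ref{diffInt}, \ref{LieDer}), the collapse of $D_{\lbar}D_k(u_i)$ into $-\Kw(\xi^{\wbar}_{\lbar}+a_{\lbar}^{\zbar}\zb)(\xi^w_k+a_k^z\z)u_i-g^{-1}\dl_{\zbar}\varphi_{k\lbar}u_{i;z}$ via equation $(4.5)$ of Lemma \ref{Liu} and $\dl_{\lbar}\Gamma_w=-\Kw\xi^{\wbar}_{\lbar}$, and finally Stokes' theorem to produce the $\Box\varphi_{k\lbar}$ term. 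All three summands arise exactly as in the paper, so there is nothing to add.
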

The last term can be rewritten by using Stokes' theorem as
\begin{eqnarray*}
&& \int{\Box\varphi_{k\lbar} u_{i}^w u_{\jbar}^{\wbar}\hw \; gdA}\\
&=&- \int{g^{-1} \dl_z\dl_{\zbar}\varphi_{k\lbar} u_{i}^w u_{\jbar}^{\wbar}\hw \; gdA}\\
&=& \int{g^{-1}\dl_{\zbar}\varphi_{k\lbar} u_{i;z}^w u_{\jbar}^{\wbar}\hw \; gdA}\\
&=& \int{g^{-1} \varphi_{k\lbar}\Kw \z \zb u_{i}^w u_{\jbar}^{\wbar}\hw \; gdA}
- \int{g^{-1}\varphi_{k\lbar} u_{i;z}^w u_{\jbar;\zbar}^{\wbar}\hw \; gdA}\\
&=& K_{\PG} \int{\varphi_{k\lbar}u_{i}^w u_{\jbar}^{\wbar} \hw \; \beta_s^*(\omega_Y)}
- \int{\varphi_{k\lbar} (u_{i;z}^w/\z) (u_{\jbar;\zbar}^{\wbar}/\zb) \; \beta_s^*(\omega_Y)}\\
\end{eqnarray*}
Using the identity $\id=H +\laplacedbar \Gbar=\dbar^*\Gbar\dbar$, the first term can be split up  as
\begin{eqnarray*}
&-& \int{D_k(u_i^w)D_{\lbar}(u^{\wbar}_{\jbar})\hw \; gdA} \\
= &-& K_{\PG}^2 \int{\Gbar(u_i^w(\xi^w_k + a_k^z \z)\zb \hw)(u_{\jbar}^{\wbar} (\xi^{\wbar}_{\lbar} + a_{\lbar}^{\zbar}\zb)\z\hw)\hw\; dA} \\
&-& K_{\PG} \int{\Gbar(u_{i;z}^wA_{k\zbar}^z)(u_{\jbar}^{\wbar} (\xi^{\wbar}_{\lbar} + a_{\lbar}^{\zbar}\zb)\z\hw)\hw\; dA} \\
&-& K_{\PG} \int{\Gbar(u_i^w(\xi^w_k + a_k^z \z)\zb \hw)(u_{\jbar;\zbar}^{\wbar}A_{\lbar z}^{\zbar})\hw\; dA} \\
&-& \int{\Gbar(u_{i;z}^wA_{k\zbar}^z)(u_{\jbar;\zbar}^{\wbar}A_{\lbar z}^{\zbar})\hw\; dA}
\end{eqnarray*}

\begin{remark}
In his article \cite{Be09}, Berndtsson computes the curvature of the bundle $f_*(L\otimes K_{\X/S})$ for a general holomorphic fibration (submersion) $f: \X \to S$ and a line bundle $L$ on the K\"ahler manifold $\X$. He proves that the direct image 
$f_*(L \otimes K_{\X/S})$ is Nakano (semi-)positive if $L$ is (semi-)positive on $\X$ (compare also \cite{LSY13,LY14}). For the Hurwitz space $\Hbn$, the bundle
$L:=\beta^*T_{\PG} \otimes K_{\X/S}^{-1}$ is positive along the fibers for $b>4g-4$. But fiberwise,  the curvature of the metric $\beta^*h \cdot g$ on the line bundle $\beta^*T_{\PG} \otimes K_{\X/S}^{-1}$ is not positive everywhere, because it is negative in the ramification points.
\end{remark}

\end{document}